\newtheorem{theorem}{Theorem}[section]
\newtheorem{lemma}{Lemma}[section]
\newtheorem{corollary}{Corollary}[section]
\newtheorem{proposition}{Proposition}[section] 
\newtheorem{definition}{Definition}[section]
\newtheorem{remark}{Remark}[section]
\newif \ifdetails
\def\ba{\begin{array}}
\def\ea{\end{array}}
\def\beq{\begin{equation}}
\def\eeq{\end{equation}}
\def\bea{\begin{eqnarray}}
\def\eea{\end{eqnarray}}
\def\beann{\begin{eqnarray*}}
\def\eeann{\end{eqnarray*}}
\def\R{\mathbb{R}}
\def\cH{\mathcal{H}}
\def\cl{\textup{cl}}
\def\Diag{\textup{Diag}}
\def\inte{\textup{int}}
\def\rank{\textup{rank}}
\def\trace{\textup{Tr}}
\def\argmin{\textup{argmin}}
\def\Newton{\textup{Newton}}
\def\rec{\textup{rec}}
\def\exp{\textup{exp}}
\def\img{\textup{img}}
\title{\bf Primal-Dual Interior-Point Methods for Domain-Driven Formulations*}
\author{
Mehdi Karimi \and Levent Tun\c{c}el}
\thanks{* Some of the material in this manuscript appeared in a preliminary form in Karimi's PhD thesis \cite{karimi_thesis}. \\
Mehdi Karimi (m7karimi@uwaterloo.ca) and Levent Tun\c{c}el (ltuncel@math.uwaterloo.ca): Department of
Combinatorics and Optimization, University
of Waterloo, Waterloo, Ontario N2L 3G1, Canada. Research of the authors was supported in part by Discovery Grants from NSERC and by U.S. Office of Naval Research under award numbers: N00014-12-1-0049, N00014-15-1-2171 and N00014-18-1-2078.}
\begin{document}
\begin{abstract}
We study infeasible-start primal-dual interior-point methods for convex optimization problems given in a typically natural form we denote as Domain-Driven formulation.  Our algorithms extend many advantages of primal-dual interior-point techniques available for conic formulations, such as the current best complexity bounds, and more robust certificates of approximate optimality, unboundedness, and infeasibility, to Domain-Driven formulations. The complexity results are new for the infeasible-start setup used, even in the case of linear programming. In addition to complexity results, our algorithms aim for expanding the applications of, and software for interior-point methods to wider classes of problems beyond optimization over symmetric cones.
\end{abstract}
\maketitle

\pagestyle{myheadings} \thispagestyle{plain}
\markboth{KARIMI and TUN{\c C}EL}
{Primal-Dual Interior-Point Methods for Domain-Driven Formulations}

\section{Introduction} \label{introduction}
In this article, a \emph{convex optimization} problem is minimizing a \emph{convex function} over a \emph{convex set} in a finite dimensional Euclidean space.
Convex optimization's powerful and elegant theory has been coupled with faster and more reliable numerical linear algebra software and powerful computers to spread its applications over many fields such as (1) data science: machine learning, compressed sensing (see \cite{candes2012exact, hastie2015statistical, donoho2006compressed, ames2011nuclear}), (2) engineering: control theory, signal processing, circuit design (see  \cite{boyd2004convex, ben2001lectures, boyd1994linear, MR1953253}), (3) relaxation and randomization: provable bounds and robust heuristics for hard nonconvex problems (see \cite{tuncel-book}), and (4) robust optimization (see \cite{ben2002robust,ben2009robust}). 
Development of  modern \emph{interior-point methods} has had a huge impact on the popularity of convex optimization. Modern theory of interior-point methods, with polynomial iteration complexity, started with Karmarkar's revolutionary paper \cite{karmarkar} in 1984 and then extended from linear optimization to general convex optimization problems by Nesterov and Nemirovskii \cite{interior-book} in the late 1980's. The literature on this topic has become extensive and many different approaches have been proposed since then.
In this article, we are interested in the modern \emph{primal-dual} interior-point techniques. \cite{cone-free} has a detailed discussion about the advantages of primal-dual techniques over purely primal ones, for example, in designing long-step algorithms. 


The focus of research for primal-dual algorithms has been mostly on \emph{conic formulations} (where minimization is over the intersection of an affine subspace and a \emph{convex cone}), see for example \cite{nesterov1992conic, infea-1, infea-2, nesterov1997self,nesterov1998primal,YTM,MTY93,tunccel2001generalization}. 
Following this research, in many settings, the state-of-the-art for utilizing primal-dual interior-point methods is to reformulate the given convex optimization problem as a conic optimization problem (see \cite{cone-free} or \cite{interior-book}-Section 5.1). This usually requires the introduction of additional variables and constraints (that are artificial in the context of the original problem). However, the applications and software for conic optimization itself have not gone much beyond optimization over \emph{symmetric (self-scaled) cones}; more specifically linear programming (LP), second-order cone programming (SOCP), and semidefinite programming (SDP). Some of the desired properties of optimization over symmetric cones have been extended to general conic optimization \cite{tunccel2001generalization,towards, skajaa-ye,myklebust2014interior}. While the conic reformulation implies that, under reasonable assumptions, all convex optimization problems enjoy the same iteration complexity bounds, there is a gap (remained unchanged for many years) between the efficiency and robustness of the software we have for optimization over symmetric cones and many other classes of problems.  In the \emph{feasible-start}\footnote{Where a pair of points in the relative interior of the primal and dual feasible regions are given.} setup, \cite{cone-free} demonstrated that not all advantages of the primal-dual interior-point techniques are intrinsically related to conic formulation. In this article, we expand this conclusion to the more challenging and practical \emph{infeasible-start} scenario. 
Specifically, we design and analyze infeasible-start primal-dual algorithms for problems given in a typically natural form (can be a conic formulation or not or an arbitrary mixture of both) that not only have comparable theoretical performance to the current best algorithms for conic formulations, but also have been used to create practical software. Let us define our setup:
\begin{definition} \label{def:DD}
\emph{A convex optimization problem is said to be in the \emph{Domain-Driven} setup if it is in the form
\begin{eqnarray} \label{main-p}
\inf _{x} \{\langle c,x \rangle :  Ax \in D\},
\end{eqnarray}
where $x \mapsto Ax$ : $\R^n \rightarrow \R^m$ is a linear embedding, with $A$ and $c \in \mathbb \R^n$ given, and $D \subset \mathbb \R^m$ is a convex set given as the closure of the domain of a $\vartheta$-\emph{self-concordant (s.c.) barrier} $\Phi$.}
\end{definition}
A s.c.\ barrier (rigorously defined in \cite{interior-book} and Appendix \ref{appen-s.c.}) is a convex function whose second derivative regulates its third and first derivatives. 
Every open convex set is the domain of a s.c.\ barrier \cite{interior-book}. Thus, in principle, every convex optimization problem can be treated in the Domain-Driven setup. In applications, the restrictive part of Definition \ref{def:DD}  is that a ``computable"\footnote{Computable means we can evaluate the function and its first and second derivatives at a reasonable cost.} s.c.\ barrier is not necessarily available for a general convex set. However, for many interesting
convex sets (each of which allows us to handle a class of convex optimization problems), we know how to construct an efficient s.c.\ barrier. Specifically,  the feasible region of many classes of problems that arise in practice is the direct sum of small dimensional convex sets with known, computable s.c.\ barriers. In the case of linear programming, for example,   
consider the 1-dimensional set $\{z \in \R: z \geq \beta\}$ for $\beta \in \R$. It is well-known that $-\ln(z-\beta)$ is a s.c.\ barrier for this set. Using this simple function and the fact that if convex sets $D_1$ and $D_2$ have s.c.\ barriers $\Phi_1$ and $\Phi_2$, respectively, then $\Phi_1+\Phi_2$ is a s.c.\ barrier for the direct sum of $D_1$ and $D_2$, we can construct a s.c.\ barrier for any polyhedron; for $A \in \R^{m \times n}$ and $b \in \R^m$, a s.c.\ barrier for
\[
\{x \in \R^n: Ax \leq b\}=\{x \in \R^n: Ax \in D\},
\]
where $D:=b-\R^m_+$,   is $-\sum_{i=1}^m \ln(b_i-a_i^\top x)$, where $a_i^\top$ is the $i$th row of $A$. This discussion for LP exemplifies the fact that knowing a s.c.\ barrier for small dimensional convex sets combined with the direct sum operator lets us solve problems with an arbitrarily large number of variables and constraints (of the same type).  

The power of the Domain-Driven setup is further accentuated when we consider the possibility of direct summing (or alternatively, intersecting) convex sets of different types. 
In the following, we show many set constraints/functions as the building blocks of a problem in the Domain-Driven setup. We start by showing that the Domain-Driven setup covers the popular optimization over symmetric cones. Many of these s.c. functions can be found in Nesterov and Nemirovski's seminal book  \cite{interior-book}.

\noindent {\bf LP, SOCP, and SDP:} optimization over symmetric cones is a special case of the Domain-Driven setup. Table \ref{tbl:DD-example-1} shows the constraints that specify $D$ and a s.c.\ barrier associated with the convex set defined by the constraint. 
\begin{table}  [h]
\centering
  \caption{ LP, SOCP, and SDP constraints and the corresponding s.c.\ barriers. $ \mathbb S^n$ is the set of $n$-by-$n$ symmetric matrices and $A \preceq B$ for $A,B \in \mathbb S^n$ means $B-A$ is positive semidefinite.}
  \label{tbl:DD-example-1}
  \begin{tabular}{ |c | c | c | }
    \hline
     & constraint & {s.c.\ barrier $\Phi$} \\ \hline
    LP& $ z \leq \beta, \ \ z, \beta \in \mathbb R,$  & $-\ln(\beta-z)$ \\ \hline
    SOCP & $ \|z\| \leq t, \ \ z \in \mathbb R^n, \ \ t \in \mathbb R,$ & $-\ln(t^2 - z^\top z)$ \\ \hline
    SDP & $Z \preceq B, \ \ Z,B \in \mathbb S^n$ & $-\ln(\det(B-Z))$ \\ \hline 
  \end{tabular}
\end{table}
For example, if our problem has the constraint $a^\top x \leq \beta$ for $a \in \mathbb R^n, \beta \in \mathbb R$, the convex set defined by this constraint is the set of $x \in \mathbb R^n$ such that $a^\top x \in \{z: z \leq \beta\}$. 

\noindent {\bf Direct sum of 2-dimensional sets:}  The Domain-Driven setup allows inequalities of the form
\begin{eqnarray} \label{intro-3}
\sum_{i=1}^\ell \alpha_i f_i(a_i^\top x + \beta_i) + g^\top x + \gamma  \leq 0,  \ \ \ a_i, g \in \mathbb R^{n}, \ \ \beta_i, \gamma  \in \mathbb R,  \ \ i \in  \{1,\ldots,\ell\},
\end{eqnarray}
where $\alpha_i \geq 0$ and  $f_i(x)$, $i \in  \{1,\ldots,\ell\}$, can be any univariate convex function whose epigraph is a 2-dimensional set equipped with a known s.c.\ barrier. Three popular examples are given in Table \ref{table1}, and several more can be found in \cite{interior-book}. The fact that constraints of the form \eqref{intro-3} fit into the Domain-Driven setup is implied by the following relation:
\begin{eqnarray} \label{eq:DD-example-1}
\begin{array}{rcl}
&&\left \{  x : \sum_{i=1}^\ell \alpha_i f_i(a_i^\top x + \beta_i) + g^\top x + \gamma \leq 0 \right \}   \\
&=&\left \{  x :  \exists u \in \R^\ell \ \text{such that} \ \sum_{i=1}^\ell \alpha_i u_i + g^\top x + \gamma \leq 0, \ \ f_i(a_i^\top x + \beta_i) \leq u_i, \ \forall  i \right \}.
\end{array}
\end{eqnarray}
Note that Geometric Programming \cite{boyd2007tutorial} and Entropy Programming \cite{fang2012entropy} with vast applications in engineering are constructed with constraints of the form \eqref{intro-3} when $f_i(z)=e^z$ for $i\in\{1,\ldots,\ell \}$ and $f_i(z)=z\ln(z)$ for $i\in\{1,\ldots,\ell \}$, respectively.  
\begin{table} [h] 
\centering 
  \caption{ Some 2-dimensional convex sets and their s.c.\ barriers.}
  \label{table1}
  \begin{tabular}{ |c | c | c | }
    \hline
     & set $(z,t)$ & {s.c.\ barrier $\Phi(z,t)$} \\ \hline
    1 & $ e^z \leq t$ & $-\ln(\ln(t)-z)-\ln(t)$ \\ \hline
    2 & $z \ln(z) \leq t, \ z>0$ & $-\ln(t-z\ln(z)) - \ln(z)$ \\ \hline 
    3 & $|z|^p \leq t, \ p \geq 1$ & $-\ln(t^{\frac 2p} - z^2) - 2\ln(t)$ \\ \hline
  \end{tabular}
\end{table} \\
\noindent {\bf Epigraph of matrix norm, minimizing nuclear norm:}  Assume that we have constraints of the form 
\begin{eqnarray} \label{EO2N-1}
&& Z-UU^\top \succeq 0, \ \ \text{where} \ \ Z=Z_0+\sum_{i=1}^\ell x_i Z_i, \ \ U=U_0+\sum_{i=1}^\ell x_i U_i.
\end{eqnarray}
$Z_i$, $i \in  \{0,\ldots,\ell\}$, are $m$-by-$m$ symmetric matrices, and $U_i$, $i \in  \{0,\ldots,\ell\}$, are $m$-by-$n$ matrices. Using the Schur complement theorem, we can reformulate \eqref{EO2N-1} as an SDP constraint with size $m+n$.
However, the set $\{(Z,U): Z-UU^\top \succeq 0 \}$ accepts the following s.c.\ barrier:
\begin{eqnarray} \label{EO2N-3}
\Phi(Z,U):=-\ln(\det(Z-UU^\top)).
\end{eqnarray}
In the cases that $m \ll n$, the parameter of the s.c.\ barrier (responsible for worst-case iteration complexity bounds (see \cite{interior-book} or Appendix \ref{appen-s.c.})) for \eqref{EO2N-3} is much smaller than the one we need for the SDP reformulation, which can make a huge difference both in theory and applications. 

A special application for constraints of the form \eqref{EO2N-1} arises in minimizing the \emph{nuclear norm}. The nuclear norm of a matrix $Z$ is $\|Z\|_*:=\trace\left ((ZZ^\top) ^{1/2} \right)$. The dual norm of $\|\cdot\|_*$ is the 2-norm $\|\cdot\|$ of a matrix. It can be shown that the following optimization problems are a primal-dual pair \cite{recht2010guaranteed}. 
\begin{eqnarray} \label{eq:dual-norm-1}
\begin{array} {ccc}  
(P_N) & \min_{X}  & \|X\|_* \\
& s.t. & A(X)=b.
\end{array} \ \ \ 
\begin{array} {ccc}  
(D_N) & \max_{z}  & \langle b,z \rangle \\
& s.t. & \|A^*(z)\| \leq 1,
\end{array}
\end{eqnarray}
where $A$ is a linear transformation on matrices and $A^*$ is its adjoint. In  machine learning and compressed sensing, $(P_N)$ is a very popular relaxation of the problem of minimizing $\rank(X)$ subject to $A(X)=b$. The dual problem $(D_N)$ is a special case of \eqref{EO2N-1} where $Z=I$ and $U=A^*(z)$. It can be shown that solving $(D_N)$ by our primal-dual techniques immediately gives us a solution for $(P_N)$.  \\
\noindent {\bf Compatibility of s.c.\ barriers, epigraph of quantum entropy and quantum relative entropy:} Another useful theoretical tool for constructing s.c.\ functions and barriers is the \emph{compatibility result}, see Chapter 5 of \cite{interior-book} and Theorem 9.1.1 of  \cite{nemirovski-notes}. Recently, such an approach was used \cite{faybusovich2014matrix,faybusovich2018primal}  to construct a s.c.\ barrier for the epigraph of \emph{quantum entropy}. Consider a function $f: \mathbb R \rightarrow \mathbb R\cup\{+\infty\}$ and let $X\in \mathbb H^n$ be a Hermitian matrix (with entries from $\mathbb C$) with a spectral decomposition $X=U \Diag(\lambda_1,\ldots,\lambda_n) U^*$, where $\Diag$ returns a diagonal matrix with the given entries on its diagonal and $U^*$ is the conjugate transpose of a unitary matrix $U$. Then, $F: \mathbb H^n \rightarrow \R\cup\{+\infty\}$ is defined as 
\begin{eqnarray*}\label{eq:fun_cal_1}
F(X):= \trace(U \Diag(f(\lambda_1),\ldots,f(\lambda_n)) U^*). 
\end{eqnarray*} 
Study of such matrix functions go back to the work of L\" owner as well as Von-Neumann (see \cite{davis1957all}, \cite{lewis2003mathematics}, and the references therein).  
It is proved in \cite{faybusovich2014matrix} (and it follows from the above-mentioned compatibility result) that if $f$ is continuously differentiable with a  \emph{matrix monotone} derivative on $\R_+$, then the function 
\begin{eqnarray*}\label{eq:fun_cal_3}
\Phi(t,X):=-\ln(t-F(X))-\ln \det(X)
\end{eqnarray*}
is a s.c.\ barrier for the epigraph of $F(X)$ in $\mathbb S_+^n$. For $f(x):=x\ln(x)$, the function $F(X)$ is called \emph{quantum entropy}.  In this case, $\Phi(t,X)$ can be seen as a lift for the s.c.\ barrier  we gave in Table \ref{table1} for the entropy function. 
Optimization of quantum entropy and its extension \emph{relative quantum entropy} have many recent applications \cite{chandrasekaran2016relative,chandrasekaran2017relative}. The authors in \cite{fawzi2017relative,fawzi2017semidefinite} approximate these problems by SDP. 
We can handle convex optimization problems involving quantum entropy in the Domain-Driven setup by the above s.c.\ barrier. We also know that $f(t,x,y):=-\ln(t-x\ln(x/y))-\ln(x)-\ln(y)$ is a 3-s.c.\ barrier for the epigraph of the relative entropy \cite{nesterov2006constructing}. We can generalize this to prove that the function $f:\R\oplus\R^n\oplus\R^n \rightarrow \R$ defined as $f(t,x,y):=-\ln(t-\sum_{i=1}^{n}x_i\ln(x_i/y_i))-\sum_{i=1}^n\ln(x_i)-\sum_{i=1}^n\ln(y_i)$ is a $(2n+1)$-s.c.\ barrier for the epigraph of vector relative entropy. Thus, we are able to treat vector relative entropy based convex optimization problems directly in our Domain-Driven setup.  \\
\noindent {\bf Combination of all the above examples:}
Assume that we have $\ell$ convex set constraints in the Domain-Driven form, with corresponding sets $D_1,\ldots,D_\ell$, and corresponding s.c.\ barriers $\Phi_1 , \ldots,\Phi_\ell$. Now, let $D:=D_1 \oplus \cdots  \oplus D_\ell$. Then, $\Phi:=\Phi_1+\cdots+\Phi_\ell$ is a s.c.\ barrier for $D$ \cite{interior-book}, and \eqref{main-p} for this $D$ is also in the Domain-Driven setup. 

\subsection{Contributions of this paper} 
Although the terminology Domain-Driven is new, the concept was proposed in \cite{cone-free}, then named \emph{cone-free}. The underlying algorithms were  feasible-start primal-dual algorithms for problems in the Domain-Driven setup. In theory of convex optimization, having a theory of feasible-start algorithms is sufficient for many purposes. In applications of convex optimization as well as in software, infeasible-start algorithms are essential.  For the infeasible-start setup, the most common approach is  (i) conic reformulation, (ii) using homogeneous self-dual embedding type algorithms (see for example \cite{infea-2}). However, software and applications of modern conic optimization itself has not gone much beyond optimization over symmetric cones.  There are other types of algorithms such as Nesterov and Nemirovski's which approximately follow multi-parameter surfaces of analytic centers \cite{multi}. These algorithms seem too complicated to directly result in a practical code.  

 For infeasible-start algorithms which solve a Newton system at every iteration, we can consider two extremes based on the number of artificial variables. At one extreme (see \cite{lustig1990feasibility,lustig1991computational,kojima1993primal,zhang1994convergence,zhang1998extending}), there is no artificial variable and the systems we solve at every iteration are the same as the ones we solve in the feasible-start case except for a perturbed right-hand-side. In the case of LP, for the primal problem $\min \{c^\top x : Ax=b, x \geq 0\}$ and dual problem $\max \{b^\top y : A^\top y+s=c, s \geq 0\}$, where $A \in \R^{m \times n}$, $c \in \R^n$, and $b \in \R^m$, the system we solve at every iteration is of the form
\begin{eqnarray*} \label{eq:rev-1}
\left[\begin{array}{ccc}A & 0 & 0 \\ 0 & A^\top & I \\  S & 0 & X \end{array}\right] \left[\begin{array}{c} d_x \\ d_y \\ d_s \end{array}\right] = \left[\begin{array}{c} r_p\\ r_d \\ Xs-\mu e \end{array}\right],
\end{eqnarray*}
where $X$ and $S$ are diagonal matrices with $x$ and $s$ on the diagonal, and $r_p:=b-Ax$ and $r_d:=c-s-A^\top y$. If the current point is feasible, $r_p$ and $r_d$ are zero and we get the system for feasible start algorithms. These algorithms work very well in practice and have been very popular since late 1980's (for example used in a once popular code OB1 \cite{lustig1991computational} as well as LIPSOL \cite{lipsol}); however, their complexity analysis has been challenging.  In the case of LP, Kojima, Megiddo, and Mizuno in \cite{kojima1993primal} proved a global convergence result  for a version of these algorithms. Zhang \cite{zhang1994convergence} proved an $O(n^2\ln(1/\epsilon))$ iteration complexity bound for this method, and for some variations the bound was further improved to $O(n\ln(1/\epsilon))$, for example by Mizuno \cite{mizuno1994polynomiality}. 
Recently, these types of algorithms have been used for even non-convex infeasible-start setups \cite{hinder-1,hinder-2}.

At the other extreme are the algorithms which work with a homogeneous self-dual embedding \cite{YTM,infea-2} where we have artificial variables and homogenization variables.  Using this formulation, Ye, Todd, and Mizuno \cite{YTM} achieved the $O(\sqrt{n}\ln(1/\epsilon))$ iteration complexity bound for LP.  Our infeasible-start approach is in the middle, closer to the first group as we add only one artificial variable, but  do not impose an explicit homogenization (moreover, we tie our artificial variable to our central path parameter). Our complexity results here are new for this approach, even in the case of LP where our iteration bound is $O(\sqrt{n}\ln(1/\epsilon))$.

We introduce a notion of duality gap for the Domain-Driven setup and define an infeasible-start primal-dual central path (Section \ref{sec:duality}). 
Then, in Section \ref{sec:alg} we design our path-following algorithms and in Section \ref{sec:analysis} we give the analysis that yields the current best iteration complexity bounds for solving the problem. By solving, we mean determining the status of a given problem (as being unbounded, infeasible, having optimal solutions, etc.) and providing suitable approximate certificates for the status. 
Several cases of ill-conditioning can happen for a given problem. In order to evaluate the performance of any algorithm in determining the status of a problem in the Domain-Driven setup, we need to carefully categorize these statuses \cite{karimi_status_arxiv, karimi_thesis}. In this paper, we briefly discuss how to interpret the outcome of the algorithms and elaborate on the case of strict primal and dual feasibility. The different patterns that can be detected by our algorithms and the iteration complexity bounds for them are comparable to the current best results available for infeasible-start conic optimization, which to the best of our knowledge is mostly in the work of Nesterov-Todd-Ye \cite{infea-2}. The algorithms we design make up the foundation of a new code DDS (Domain-Driven Solver).

Part of the strength and elegance of the interior-point machinery for conic optimization comes from the fact that convex cones accept s.c.\ barriers that are \emph{logarithmically-homogeneous} (LH). Figure \ref{Fig-diagram} shows the relation between various classes of s.c.\ functions. LF conjugate of a LH s.c.\ barrier is also a LH s.c.\ barrier; an important property that we loose for a general s.c.\ barrier. However, importantly, the LF conjugate of a s.c.\ barrier has more properties than an arbitrary s.c.\ function. Another contribution of this article is that in the design and analysis of our algorithms, we vastly exploit this property, which has not been considered at this level of detail in the literature.   
\begin{figure}
\includegraphics[scale=0.55]{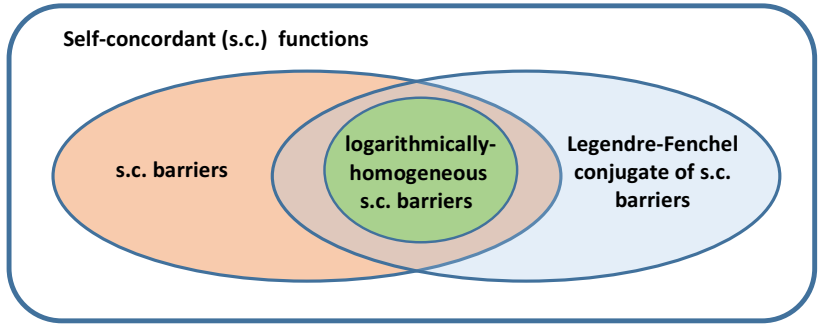}
\centering
\caption{\small A diagram that shows the relationships among various classes of self-concordant functions (see Appendix \ref{appen:examples} for various examples). }
\label{Fig-diagram}
\end{figure}
\subsection{Assumptions and notations} 
 To design our primal-dual algorithms for the Domain-Driven setup, we make some assumptions. First, we assume that the kernel of $A$ is $\{0\}$ in Definition \ref{def:DD}; otherwise we can update $A$ to a matrix $\bar A$ whose columns form a basis for $\img A$ (image of $A$) and then update $c$ and $\Phi$ accordingly (see \cite{interior-book} for stability of s.c.\ barriers under affine maps). We also assume that the Legendre-Fenchel (LF) conjugate $\Phi_*$ of $\Phi$ is given. Even though restricting, such assumptions are unavoidable in the context of primal-dual techniques. Also, for many classes of problems, including the above examples, $\Phi_*$ is computable. The domain of $\Phi_*$ is the interior of a cone $D_*$ defined as (see \cite{interior-book}-Theorem 2.4.2 or \eqref{open-cone}):
 \begin{eqnarray} \label{eq:leg-conj-2} 
D_*=\{y:  \langle y , h \rangle  \leq 0, \ \ \forall h \in \rec(D)\}, 
\end{eqnarray}
where $\rec(D)$ is the recession cone of $D$ (see \eqref{eq:rec-cone}).
Consider an Euclidean vector space $\mathbb E$ with dual space $\mathbb E^*$ and a scalar product $\langle \cdot, \cdot \rangle$.  For a self-adjoint positive definite linear transformation $B: \mathbb E \rightarrow \mathbb E^*$, we define a conjugate pair of Euclidean norms as:
\begin{eqnarray} \label{norms}
\|x\|_B &:=& \left [ \langle Bx,x \rangle \right] ^{1/2},  \nonumber \\
\|s\|^*_B &:=& \max\{\langle s,y \rangle: \ \|y\|_{B} \leq 1\} = \|s\|_{B^{-1}}=  \left [ \langle s, B^{-1}s \rangle \right] ^{1/2}.
\end{eqnarray} 
Note that \eqref{norms} immediately gives us a general Cauchy-Schwarz (CS) inequality:
\begin{eqnarray} \label{eq:CS}
\langle s,x \rangle  \leq \|x\|_B \|s\|^*_B, \ \ \forall x \in \mathbb E, \forall s \in \mathbb E^*. 
\end{eqnarray}
For simplicity, we use abbreviations  RHS and LHS for right-hand-side and left-hand-side, respectively.
We define the following function which is frequently used in the context of self-concordant functions. 
\begin{eqnarray}  \label{eq:rho}
\rho (t) :=\left \{ \begin{array}{ll} t-\ln(1+t)=\frac{t^2}{2}-\frac{t^3}{3}+\frac{t^4}{4}+\cdots,   &  t > -1,   \\ +\infty,   & t \leq -1.\end{array} \right.
\end{eqnarray} 
We also need, in some sense, the inverse of this function 
\begin{eqnarray} \label{eq:sigma-1}
	\sigma(s) := \max\{t: \rho(t) \leq s\}, \ s \geq 0.
\end{eqnarray}
\section{Duality gap for Domain-Driven setup and central path} \label{sec:duality}
Considering the \emph{support function} of $D$,
\begin{eqnarray}  \label{eq:supp-fun-1}
\delta_*(y|D) := \sup\{ \langle y,z \rangle :  z \in D\},
\end{eqnarray}
we define the duality gap as:
\begin{definition} \label{def:duality-gap}
\emph{For every point $ x \in \R^n$ such that $A  x \in D$ and every point $ y \in D_*$ such that $A^\top y = -c$, the \emph{duality gap} is defined as:
\begin{eqnarray} \label{eq:duality-gap-1}
\langle c, x \rangle + \delta_*(y|D).
\end{eqnarray}}
\end{definition}
The following lemma shows that duality gap is well-defined and  zero duality gap is a guarantee for optimality:
\begin{lemma}  \label{lem:dd-7}
	For every point $ x \in \mathbb \R^n$ such that $A  x \in D$ and every point $ y \in D_*$ such that $A^\top y = -c$, we have
	\begin{eqnarray} \label{eq:duality-gap-2}
	\langle c, x \rangle + \delta_*(y|D) \geq 0. 
	\end{eqnarray}
Moreover, if the equality holds above for a pair $(\hat x, \hat y)$ with $A \hat x \in D$ and $\hat y \in D_*, \ A^\top \hat y = -c$, then $\hat x$ is an optimal solution of \eqref{main-p}.
\end{lemma}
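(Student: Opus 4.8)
The plan is to read \eqref{eq:duality-gap-2} as a weak-duality estimate: it should follow immediately from the definition of the support function together with the feasibility conditions, and the optimality assertion should then be a one-line consequence.

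First I would exploit the two defining properties of the pair $(x,y)$. Since $Ax \in D$, the support function in \eqref{eq:supp-fun-1} satisfies $\delta_*(y|D) \geq \langle y, Ax \rangle$. On the other hand, moving $A$ to the other side of the scalar product and using $A^\top y = -c$ gives $\langle y, Ax \rangle = \langle A^\top y, x \rangle = -\langle c, x \rangle$. Chaining these two facts yields $\delta_*(y|D) \geq -\langle c, x \rangle$, which is exactly \eqref{eq:duality-gap-2}. This reasoning uses nothing about self-concordance; it only needs $Ax \in D$, $A^\top y = -c$, and the trivial lower bound on the supremum, and it remains valid (vacuously) when $\delta_*(y|D) = +\infty$, so \eqref{eq:duality-gap-2} holds as an inequality of extended reals.

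For the ``moreover'' part, I would fix a pair $(\hat x, \hat y)$ attaining equality, so $\langle c, \hat x \rangle + \delta_*(\hat y|D) = 0$; in particular $\delta_*(\hat y|D) = -\langle c, \hat x \rangle$ is finite. Let $x$ be any point with $Ax \in D$. Applying the inequality already established, now with $y := \hat y$, gives $\langle c, x \rangle \geq -\delta_*(\hat y|D) = \langle c, \hat x \rangle$. Since $x$ was an arbitrary feasible point of \eqref{main-p}, this shows $\hat x$ attains the infimum, i.e.\ $\hat x$ is optimal.

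I do not expect any genuine obstacle here; the proof is short weak duality. The only points requiring a little care are the adjoint bookkeeping $\langle y, Ax \rangle = \langle A^\top y, x \rangle$ and the observation that the estimate passes through trivially when the support function value is $+\infty$, so that the equality hypothesis in the second part automatically entails finiteness of $\delta_*(\hat y|D)$ and the argument is not vacuous.
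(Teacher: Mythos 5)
Your argument is correct and is essentially identical to the paper's proof: both parts are the same weak-duality chain $\langle c,x\rangle = -\langle A^\top y,x\rangle = -\langle y,Ax\rangle \geq -\delta_*(y|D)$, with the optimality claim obtained by applying this inequality at $\hat y$ against an arbitrary feasible $x$. Your added remarks on the extended-real case and finiteness of $\delta_*(\hat y|D)$ are harmless refinements, not a different route.
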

\begin{proof} Let $x$ and $ y$ be as above. Then, 
\begin{eqnarray*}
\langle c, x \rangle \underbrace{=}_{A^\top  y=-c} -\langle A^\top  y,  x \rangle = - \langle   y, A  x \rangle	  \underbrace{\geq}_{A x \in D, \  y \in D_*} -  \delta_*(y|D). 
\end{eqnarray*}	
Thus, $\langle c, x \rangle + \delta_*(y|D) \geq 0$, as desired. If equality holds for $(\hat x, \hat y)$, then for every $x$ such that
$Ax \in D$, we have
\[
\langle c, \hat x \rangle  \underbrace{=}_{\text{\eqref{eq:duality-gap-2} holds with equality}}  - \delta_*(\hat y|D)  \underbrace{\leq}_{\text{\eqref{eq:supp-fun-1}}}  - \langle \hat y, Ax \rangle =  \langle - A^\top \hat y, x \rangle   \underbrace{=}_{A^\top \hat y=-c} \langle c,  x \rangle.
\] 
Therefore, $\hat x$ is an optimal solution for \eqref{main-p}.
\end{proof} 
Duality gap must be easily computable and support function is not generally easy to calculate. However, the following theorem shows that we can estimate the support function within any desired accuracy using the fact that $\Phi_*$ is the LF conjugate of a s.c.\ barrier. 
\begin{theorem} [Theorem 2.4.2 of \cite{interior-book}]  \label{thm:support-fun}
Assume that $\Phi$ is a  $\vartheta$-s.c.\ barrier on $D$ and let $\Phi_*$ be the LF conjugate of $\Phi$ with domain $\inte D_*$. Then, for every point $y \in \inte D_*$ we have
\begin{eqnarray} \label{eq-infeas-1}
\delta_*(y|D) - \frac{\vartheta}{k}  \leq \langle \Phi'_*(ky), y \rangle \leq  \delta_*(y|D),\ \ \forall k > 0. 
\end{eqnarray}
Moreover,
\begin{eqnarray} \label{eq:norm-phi*-1}
\Phi''_*(y) [y,y]  \leq \vartheta.
\end{eqnarray}
\end{theorem}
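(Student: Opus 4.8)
The plan is to reduce everything to two facts about the $\vartheta$-s.c.\ barrier $\Phi$: (i) the conjugation relations $\Phi'(\Phi'_*(y)) = y$ and $\Phi''_*(y) = [\Phi''(\Phi'_*(y))]^{-1}$, valid for $y \in \inte D_*$, which just say that $\Phi'_*$ is the inverse of the gradient map $\Phi' : \inte D \to \inte D_*$ (from the first-order optimality condition for the supremum defining $\Phi_*$) and its derivative is obtained by differentiating $\Phi' \circ \Phi'_* = \textup{id}$; and (ii) the semi-boundedness inequality $\langle \Phi'(z), w - z\rangle \leq \vartheta$ for all $z \in \inte D$, $w \in D$, together with its infinitesimal form $\langle \Phi'(z), [\Phi''(z)]^{-1}\Phi'(z)\rangle \leq \vartheta$. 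The latter is immediate from the defining inequality $\langle \Phi'(z),h\rangle^2 \leq \vartheta\,\langle \Phi''(z)h,h\rangle$ upon taking $h = [\Phi''(z)]^{-1}\Phi'(z)$.

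For the right-hand side of \eqref{eq-infeas-1}: fix $y \in \inte D_*$ and $k>0$; since $\inte D_*$ is a cone, $ky \in \inte D_*$ and $z := \Phi'_*(ky) \in \inte D \subseteq D$. Then $\langle \Phi'_*(ky), y\rangle = \langle z, y\rangle \leq \sup_{w\in D}\langle w,y\rangle = \delta_*(y|D)$ directly from \eqref{eq:supp-fun-1}. For the left-hand side, I would rewrite the target as $\delta_*(y|D) - \langle z,y\rangle \leq \vartheta/k$; using $\delta_*(y|D) = \sup_{w\in D}\langle w,y\rangle$ and $\Phi'(z) = ky$, this becomes $\sup_{w\in D}\langle \Phi'(z), w - z\rangle \leq \vartheta$, which is exactly the semi-boundedness inequality (ii). (The same computation with any fixed $k$ also shows $\delta_*(y|D) < +\infty$, so the statement is not vacuous.) For \eqref{eq:norm-phi*-1}, set $z := \Phi'_*(y)$, so $\Phi'(z) = y$ and $\Phi''_*(y) = [\Phi''(z)]^{-1}$; then $\Phi''_*(y)[y,y] = \langle \Phi'(z), [\Phi''(z)]^{-1}\Phi'(z)\rangle \leq \vartheta$ by the infinitesimal bound in (ii).

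The only nonroutine ingredient is the semi-boundedness inequality (ii). If it is not invoked as a known property of $\vartheta$-barriers, I would prove it by a one-variable argument: for $w \in \inte D$ put $g(t) := \langle \Phi'(z+t(w-z)), w-z\rangle = \frac{d}{dt}\Phi(z+t(w-z))$ on $[0,1)$. Then $g'(t) = \langle \Phi''(z+t(w-z))(w-z), w-z\rangle \geq 0$, so $g$ is nondecreasing, and the defining inequality gives $g(t)^2 \leq \vartheta\, g'(t)$. If $g(0)\leq 0$ the bound is trivial; otherwise $g \geq g(0) > 0$ throughout, hence $(1/g)' = -g'/g^2 \leq -1/\vartheta$, and integrating from $0$ to $s<1$ gives $1/g(0) \geq 1/g(0) - 1/g(s) \geq s/\vartheta$; since $\Phi(z+t(w-z)) \to +\infty$ as $t\to 1^-$ forces $g(s)\to +\infty$, letting $s\to 1$ yields $g(0) \leq \vartheta$. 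Finally, $\sup_{w\in D}\langle \Phi'(z), w-z\rangle = \sup_{w\in \inte D}\langle \Phi'(z), w-z\rangle$ for a convex $D$, so the restriction to $\inte D$ is harmless. I expect this estimate, and keeping track of which space ($\mathbb E$ versus $\mathbb E^*$) each object lives in, to be the main point requiring care; everything else is formal use of the conjugacy relations.
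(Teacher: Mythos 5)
Your argument is correct, but note that the paper itself offers no proof of this statement: it is quoted verbatim as Theorem 2.4.2 of \cite{interior-book}, so what you have done is reconstruct the book's argument rather than parallel a proof in the text. Your reconstruction is sound: the right inequality in \eqref{eq-infeas-1} is indeed just $\Phi'_*(ky)\in\inte D$ plus the definition \eqref{eq:supp-fun-1}; the left inequality reduces, via $\Phi'(\Phi'_*(ky))=ky$, to the semiboundedness bound $\langle \Phi'(z),w-z\rangle\le\vartheta$, which is exactly property \eqref{property-6} already recorded in Appendix \ref{appen-s.c.} (so you could simply have invoked it); and \eqref{eq:norm-phi*-1} follows from \eqref{eq:LF-2} together with $\langle \Phi'(z),[\Phi''(z)]^{-1}\Phi'(z)\rangle\le\vartheta$, i.e.\ \eqref{property-2-2}. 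Your ODE-style proof of semiboundedness ($g^2\le\vartheta g'$, integrate $(1/g)'\le-1/\vartheta$) is the standard one and is complete as written, since dropping the positive term $1/g(s)$ already gives $1/g(0)\ge s/\vartheta$ for all $s<1$.

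One small blemish, not a gap: the sentence claiming $\Phi(z+t(w-z))\to+\infty$ as $t\to1^-$, hence $g(s)\to+\infty$, is false in your setting, because you fixed $w\in\inte D$, so the segment stays in the interior and $\Phi$ remains finite at $t=1$. Fortunately this remark is superfluous — the chain $1/g(0)\ge 1/g(0)-1/g(s)\ge s/\vartheta$ uses only $g(s)>0$ — so the conclusion $g(0)\le\vartheta$ stands; just delete the blow-up justification (it would be the relevant mechanism only if you worked with boundary points $w\in D\setminus\inte D$ directly, which you correctly avoid by the closure argument at the end). The only other implicit hypothesis worth acknowledging is nondegeneracy of $\Phi$ (so that $\Phi'$ is a bijection onto $\inte D_*$ and $\Phi''_*(y)=[\Phi''(\Phi'_*(y))]^{-1}$ makes sense), which is the setting the paper assumes when it states \eqref{eq:LF-2}.
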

\begin{corollary}
Assume that there exist a sequence $\{z^k\} \in \inte D$ such that $z^k \rightarrow A \hat x \in D$, and a sequence $\{y^k\} \in \inte D_*$ such that $y^k \rightarrow \hat y \in D_*$ and $A^\top \hat y=-c$. If
\[
\lim_{k} \left (\langle c, x^k \rangle + \langle y^k, \Phi'_*(k y^k)  \rangle \right)=0,
\]
then $\hat x$ is an optimal solution of \eqref{main-p}.
\end{corollary}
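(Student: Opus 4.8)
The plan is to reduce the statement to the already-proved Lemma \ref{lem:dd-7} by showing that the pair $(\hat x,\hat y)$ achieves equality in \eqref{eq:duality-gap-2}; the bridge between the easily-computable quantity $\langle y^k,\Phi'_*(ky^k)\rangle$ and the support function $\delta_*(\cdot|D)$ is supplied by Theorem \ref{thm:support-fun}. First I would record the elementary continuity facts. Since $\ker A=\{0\}$ by our standing assumption, each $z^k\in\img A$ has a unique preimage $x^k$ with $Ax^k=z^k$, and $z^k\to A\hat x$ forces $x^k\to\hat x$; hence $\langle c,x^k\rangle\to\langle c,\hat x\rangle$. Combining this with the hypothesis $\langle c,x^k\rangle+\langle y^k,\Phi'_*(ky^k)\rangle\to 0$ gives $\langle y^k,\Phi'_*(ky^k)\rangle\to-\langle c,\hat x\rangle$.

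Next I would invoke Theorem \ref{thm:support-fun} at each $y^k\in\inte D_*$, taking the scaling parameter there to be the index $k$ itself. Inequality \eqref{eq-infeas-1} then reads
\[
\langle \Phi'_*(ky^k), y^k\rangle \;\le\; \delta_*(y^k|D) \;\le\; \langle \Phi'_*(ky^k), y^k\rangle + \frac{\vartheta}{k}.
\]
Letting $k\to\infty$ and using $\vartheta/k\to 0$, a squeeze argument yields $\delta_*(y^k|D)\to-\langle c,\hat x\rangle$. Now $\delta_*(\cdot|D)$, being the pointwise supremum of the continuous affine functions $y\mapsto\langle y,z\rangle$, $z\in D$, is lower semicontinuous; since $y^k\to\hat y$ this gives $\delta_*(\hat y|D)\le\liminf_k\delta_*(y^k|D)=-\langle c,\hat x\rangle$, i.e. $\langle c,\hat x\rangle+\delta_*(\hat y|D)\le 0$. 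On the other hand, $A\hat x\in D$, $\hat y\in D_*$, and $A^\top\hat y=-c$, so Lemma \ref{lem:dd-7} gives the reverse inequality $\langle c,\hat x\rangle+\delta_*(\hat y|D)\ge 0$. Hence equality holds for the pair $(\hat x,\hat y)$, and the ``moreover'' clause of Lemma \ref{lem:dd-7} yields that $\hat x$ is an optimal solution of \eqref{main-p}.

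The only step requiring genuine care is the passage from convergence of $\langle y^k,\Phi'_*(ky^k)\rangle$ to convergence of $\delta_*(y^k|D)$: this is precisely where estimate \eqref{eq-infeas-1} is indispensable, since the support function is otherwise not directly accessible. Everything else is routine bookkeeping with continuity of linear maps, lower semicontinuity of support functions, and the squeeze theorem, together with an appeal to Lemma \ref{lem:dd-7}. One should also note that finiteness of $\delta_*(\hat y|D)$ is automatic from $\hat y\in D_*$ via \eqref{eq:leg-conj-2}, so none of the limits above is vacuous.
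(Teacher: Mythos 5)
Your proposal is correct and follows essentially the same route as the paper, which proves the corollary in one line by using Theorem \ref{thm:support-fun} to approximate the support function and then invoking Lemma \ref{lem:dd-7}. Your write-up merely supplies the details (injectivity of $A$ giving $x^k\to\hat x$, the squeeze via \eqref{eq-infeas-1}, and lower semicontinuity of $\delta_*(\cdot|D)$ to pass to the limit point $\hat y$), all of which are sound.
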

\begin{proof}
We use Theorem \ref{thm:support-fun} to approximate the support function and then apply Lemma \ref{lem:dd-7}. 
\end{proof}
\subsection{Primal-dual infeasible-start central path}
Our algorithms are \emph{infeasible-start}, which means we do not require a feasible point from the user to start the algorithm. To introduce our infeasible-start central path, we start with a feasible start central path, called cone-free in \cite{cone-free}, which is defined by the set of solutions to:
\begin{eqnarray}  \label{main-f}
\begin{array} {clc} 
    (a)  & Ax \in \inte D, & \\
    (b)  & A^\top y=-\tau c, &   y \in \inte D_*,\\
    (c)  &   y=\Phi'(Ax), &
\end{array}
\end{eqnarray}
where $\tau >0$ is the parameter of the path. It is proved in \cite{cone-free} that under \emph{strict primal-dual feasibility}  (there exists $\hat x$ such that $A \hat x \in \inte D$ and $\hat y \in \inte D_*$ such that $A^\top \hat y = -c$), the system \eqref{main-f} has a unique solution $(x(\tau),y(\tau))$ for every $\tau >0$ and $x(\tau)$ converges to a solution of \eqref{main-p} when $\tau \rightarrow +\infty$. 
Note that we can also prove this by utilizing our notion of duality gap and using Theorem \ref{thm:support-fun}. 

Let us see how to modify \eqref{main-f} for an infeasible-start algorithm. We assume that we can choose a point $z^0 \in \inte D$ and then we define $y^0 := \Phi'(z^0) \in \inte D_*$. We modify the primal and dual feasibility parts of  \eqref{main-f} as follows:
\begin{eqnarray}  \label{eq:mod-path-1}
\begin{array} {clc} 
    (a)  & Ax+\frac{1}{\tau} z^0 \in \inte D, &  \tau > 0, \\
    (b)  & A^\top y=A^\top y^0 - (\tau-1) c, &   y \in \inte D_*,
\end{array}
\end{eqnarray}
where $(x^0:=0,\tau_0:=1,y^0)$, is feasible for this system, and when $\tau \rightarrow +\infty$, we get a pair of primal-dual feasible points in the limit. 
Let us give a name to the set of points that satisfy \eqref{eq:mod-path-1}:
\begin{eqnarray} \label{QDD-copy-2}
 \ \ \ \ \ Q_{DD}:= \left \{ (x,\tau,y): A x + \frac{1}{\tau} z^0 \in \inte D, \ \tau > 0, \ \  A^\top y -A^\top y^0 = -(\tau-1) c, \   y \in \inte D_*  \right \}. 
\end{eqnarray}
Our goal is to design infeasible-start primal-dual algorithms as robust as the best ones for the conic setup, which as far as we know, are the homogeneous self-dual embedding type algorithms proposed in  \cite{infea-2}. 
For the primal-dual conic setup, the duality gap for the modified problem in \cite{infea-2} has the following two crucial properties when the parameter of the path tends to $+\infty$: \\
\noindent{\bf(1)} it tends to zero if the problem is solvable, \\
\noindent{\bf(2)} it tends to $+\infty$ if primal or dual is infeasible. \\
To enforce such a property for the Domain-Driven setup, we treat $\tau$ as a variable (artificial variable) and add another parameter $\mu$ which plays the role of the parameter for the central path. Figure \ref{fig:central-path} schematically shows the primal-dual central paths. 
\begin{figure}
\centering
\includegraphics[scale=0.4]{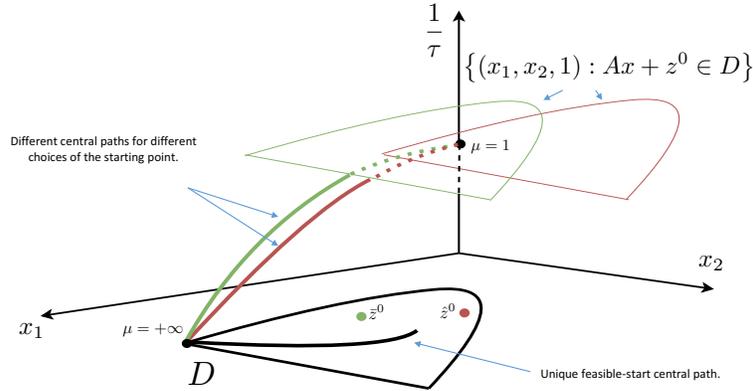}
\caption{\small A problem in the Domain-Driven setup with $D\subset \R^2$, $A:=I$. The infeasible-start and the unique feasible-start central paths projected onto the primal space are shown. }
\label{fig:central-path}
\end{figure}
Let us fix $\xi > 1$ and define:
\begin{eqnarray} \label{starting-points-copy-2}
z^0 := \text{any vector in  $\inte D$}, \ \ y^0 :=  \Phi'(z^0), \ \ y_{\tau,0} := -\langle y^0, z^0 \rangle -\xi \vartheta.
\end{eqnarray} 
\noindent The following theorem defines our central path.
\begin{theorem} \label{thm:cent-path}
Consider the convex set $D \subset \R^m$ equipped with a $\vartheta$-s.c.\ barrier $\Phi$ and let $\Phi_*$ be its LF conjugate with domain $\inte D_*$. Then, for every set of starting points defined in \eqref{starting-points-copy-2}, the system 
\begin{eqnarray} \label{trans-dd-path-1-copy-2}
\begin{array}{rcl}
&(a)&  A x + \frac{1}{\tau} z^0 \in \inte D, \ \ \tau > 0,  \\
&(b)& A^\top y -A^\top y^0 = -(\tau-1) c, \ \ y \in \inte D_*,  \\
&(c)& y=\frac{\mu  }{\tau}  \Phi' \left (  A x + \frac{1}{\tau} z^0 \right), \\
&(d)& \langle c,x \rangle +  \frac{1}{\tau} \langle y, Ax+\frac{1}{\tau} z^0 \rangle =  -\frac{\vartheta \xi \mu}{\tau^2} +\frac{ -y_{\tau,0}}{\tau},
\end{array}
\end{eqnarray}
has a unique solution $(x(\mu), \tau(\mu), y(\mu))$ for every $\mu > 0$.
\end{theorem}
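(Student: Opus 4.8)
The plan is to fix $\mu>0$, collapse the system onto a one-parameter family indexed by the artificial variable $\tau$, and reduce the whole statement to a single scalar equation. First I observe that, for a fixed $\tau>0$, after the substitution $\hat y:=\frac{\tau}{\mu}y$ the triple $(a)$--$(c)$ of \eqref{trans-dd-path-1-copy-2} is exactly the feasible-start (``cone-free'') central-path system \eqref{main-f} at parameter $1$ for the Domain-Driven problem $\inf_x\{\langle\tilde c_\tau,x\rangle:Ax\in\tilde D\}$, where $\tilde D:=D-\frac1\tau z^0$ (carrying the affine-shifted $\vartheta$-s.c.\ barrier $v\mapsto\Phi(v+\frac1\tau z^0)$, whose conjugate again has domain $\inte D_*$) and $\tilde c_\tau:=-\frac{\tau}{\mu}\big(A^\top y^0-(\tau-1)c\big)$. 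Let $I_\mu$ be the set of $\tau>0$ for which this problem is strictly primal and strictly dual feasible; explicitly $I_\mu=\{\tau:\frac1\tau z^0\in\inte D+\img A\}\cap\{\tau:A^\top y^0-(\tau-1)c\in A^\top(\inte D_*)\}$, which is an open interval containing $\tau=1$ (each factor is the preimage of an open convex set under an affine function of $\tau$, and $\tau=1$ lies in both because $z^0\in\inte D$ and $y^0\in\inte D_*$). By the result of \cite{cone-free}, for every $\tau\in I_\mu$ the subsystem $(a)$--$(c)$ has a unique solution $(x(\tau),y(\tau))$, depending smoothly on $\tau$ via the implicit function theorem applied to the nondegenerate equations $(b)$--$(c)$; for $\tau\notin I_\mu$ one of $(a),(b)$ is infeasible. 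Hence every solution of \eqref{trans-dd-path-1-copy-2} has $\tau\in I_\mu$, and the theorem is equivalent to the assertion that
\[
\Psi_\mu(\tau):=\langle c,x(\tau)\rangle+\tfrac1\tau\big\langle y(\tau),\,Ax(\tau)+\tfrac1\tau z^0\big\rangle+\tfrac{\vartheta\xi\mu}{\tau^2}+\tfrac{y_{\tau,0}}{\tau}
\]
has exactly one zero on $I_\mu$.

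For ``at most one zero'' I would show that $\Psi_\mu$ is $C^1$ and strictly monotone on $I_\mu$. On differentiating, the contribution of $\frac{d}{d\tau}x(\tau)$ largely cancels because $x(\tau)$ satisfies the first-order conditions $(b)$--$(c)$ (an envelope-type cancellation, made precise using $\Phi'(w(\tau))=\frac{\tau}{\mu}y(\tau)$ and monotonicity of $\Phi'$), and the surviving terms combine with the explicit derivative of $\frac{\vartheta\xi\mu}{\tau^2}+\frac{y_{\tau,0}}{\tau}$ into an expression of one sign built from the positive-definite $\Phi''(w(\tau))$; the normalization $y_{\tau,0}=-\langle y^0,z^0\rangle-\xi\vartheta$ with $\xi>1$, together with the bound $\Phi''_*(\cdot)[\cdot,\cdot]\le\vartheta$ of \eqref{eq:norm-phi*-1} and the basic barrier inequality $\langle\Phi'(w),u-w\rangle\le\vartheta$, is exactly what forces this sign to be strict (the case $\xi=1$ being the borderline one).

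For ``at least one zero'' I would analyze the behavior of $\Psi_\mu$ at the two ends of $I_\mu$. As $\tau$ tends to an endpoint, strict feasibility of the shifted problem breaks down, so either $w(\tau)=Ax(\tau)+\frac1\tau z^0$ approaches $\partial D$, or $\hat y(\tau)=\frac{\tau}{\mu}y(\tau)$ approaches $\partial D_*$, or these quantities escape to infinity along the corresponding recession cones; using Theorem \ref{thm:support-fun} to replace $\frac1\tau\langle y(\tau),w(\tau)\rangle$ by the support function $\delta_*(\cdot\,|D)$ up to a controlled error, together with the barrier inequality, one checks that $\Psi_\mu(\tau)$ diverges to $+\infty$ at one endpoint and to $-\infty$ at the other. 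Strict monotonicity then pins down a unique zero, which yields the unique $(x(\mu),\tau(\mu),y(\mu))$. The main obstacle is precisely this last step — controlling the inner central-path data $x(\tau),y(\tau)$ as $\tau$ leaves $I_\mu$ and extracting the correct limiting signs of $\Psi_\mu$ there — since the monotonicity computation, once the role of $\xi>1$ is isolated, and the reduction in the first paragraph are essentially bookkeeping. (If $\Phi$ composed with the embedding is degenerate, so that $(a)$--$(c)$ under-determines $x$ along a lineality direction of $D\cap\img A$, the extra relation needed to restore uniqueness is supplied by the $\langle c,\cdot\rangle$ term in $(d)$; one reduces to the nondegenerate case beforehand using the affine-invariance of s.c.\ barriers.)
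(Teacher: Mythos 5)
Your reduction to a scalar equation in $\tau$ is set up correctly (for each fixed $\mu$ and each $\tau$ in your interval $I_\mu$, conditions (a)--(c) of \eqref{trans-dd-path-1-copy-2} are indeed the cone-free feasible-start system of the shifted problem, and outside $I_\mu$ condition (a) or (b) is infeasible), but the two analytic claims on which the whole proof rests --- strict monotonicity of $\Psi_\mu$ on $I_\mu$, and divergence of $\Psi_\mu$ to $+\infty$ at one endpoint and $-\infty$ at the other --- are false, already for the simplest LP instance. Take $m=n=1$, $A=1$, $D=\{z\ge 0\}$, $\Phi(z)=-\ln z$ (so $\vartheta=1$), $c=1$, $\xi=2$, $z^0=1$, hence $y^0=-1$ and $y_{\tau,0}=-1$. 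Then (b) forces $y(\tau)=-\tau$, (c) gives $x(\tau)+\frac1\tau=\frac{\mu}{\tau^2}$, one has $I_\mu=(0,\infty)$, and a direct computation gives $\Psi_\mu(\tau)=\frac{2(\mu-\tau)}{\tau^2}$. This function is not monotone on $I_\mu$ (it decreases on $(0,2\mu)$ and increases on $(2\mu,\infty)$), and as $\tau\to+\infty$ it tends to $0^-$, not to $-\infty$. So your uniqueness mechanism (global strict monotonicity) and your existence mechanism (a sign change forced by $\pm\infty$ limits at the endpoints of $I_\mu$) both break down, even though the unique zero $\tau=\mu$ does exist; and both steps are only sketched (``I would show \dots''). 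You also label the endpoint analysis as the main obstacle and the monotonicity computation as bookkeeping, but it is the other way around: the monotonicity claim is simply not true, and since the theorem assumes nothing about the feasibility status of the original problem, the behavior of $(x(\tau),y(\tau))$ near the ends of $I_\mu$ can be of several qualitatively different kinds that a corrected argument would have to classify case by case.

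For contrast, the paper avoids the one-dimensional reduction entirely: it observes that $\Phi\left(\frac z\tau\right)-\xi\vartheta\ln\tau$ is a s.c.\ function (Lemma \ref{lem:dd-9}), and writes the full system \eqref{trans-dd-path-1-copy-2} as the stationarity condition of minimizing, over the affine subspace encoding (b) together with $z=\tau Ax+z^0$ and $y_\tau=y_{\tau,0}+\tau\langle c,x\rangle$, the sum of this s.c.\ function, its LF conjugate, and the coupling term $-\frac1\mu(\langle y,z\rangle+\tau y_\tau)$. Fenchel--Young bounds the objective below by $-\xi\vartheta\ln\mu$, the coupling term is shown to be linear on the feasible affine set, so the objective is a nondegenerate s.c.\ function bounded below, and its unique minimizer yields existence and uniqueness simultaneously, uniformly in the status of the problem. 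If you wish to salvage a $\tau$-parametrized argument, you would need a substitute for monotonicity (for instance, proving $\Psi_\mu'<0$ at every zero of $\Psi_\mu$) together with a genuine limiting analysis of the inner central-path data at the boundary of $I_\mu$ in all status cases; at that point the convex-variational route of the paper is considerably shorter.
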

\noindent We denote the solution set of \eqref{trans-dd-path-1-copy-2}  for $\mu > 0$ by the \emph{Domain-Driven primal-dual central path}. Note that for $\mu_0=1$, the point $(x,\tau,y)=(0,1,y^0)$ satisfies all the equations in \eqref{trans-dd-path-1-copy-2}.  
In view of the definition of the central path, for all the points $(x,\tau,y)\in Q_{DD}$, we define
\begin{eqnarray}  \label{eq:dd-4-2}
\begin{array}{rcl}
\mu(x,\tau,y) &:=&   \frac{\tau}{\xi \vartheta}[-y_{\tau,0} - \tau \langle c, x \rangle - \langle y, A x + \frac{1}{\tau} z^0 \rangle], \\
                     &=&- \frac{1}{\xi \vartheta} \left [ \langle y,z^0 \rangle +\tau (y_{\tau,0}+\langle y, Ax \rangle) + \tau^2 \langle c,x \rangle \right] \\
                     &=& - \frac{1}{\xi \vartheta} \left [ \langle y,z^0 \rangle +\tau (y_{\tau,0}+\langle c,x \rangle+ \langle y^0, Ax \rangle) \right], \ \ \ \text{using \eqref{trans-dd-path-1-copy-2}-(b).}
                     \end{array}
\end{eqnarray}
The formula in the second line is a quadratic in terms of $\tau$. However, when we use the dual feasibility condition, we get the third formula that is linear in $\tau$. In other words, the dual feasibility condition removes one of the roots. Assume that both the primal and dual are strictly feasible and we choose $z^0=0$, $x^0$ such that $Ax^0 \in \inte D$, and $y^0$ such that $A^\top y^0 =-c$. Then, the last equation of \eqref{eq:dd-4-2} reduces to $\mu=\tau$ and \eqref{trans-dd-path-1-copy-2} reduces to the cone-free setup in \eqref{main-f}. 
\begin{proof}[Proof of Theorem \ref{thm:cent-path}.]
 Consider the function  $\Phi(\frac{z}{\tau})-\xi \vartheta \ln(\tau)$ that  is a s.c.\ function (see Lemma \ref{lem:dd-9} or the proof of \cite{interior-book}-Proposition 5.1.4). The LF conjugate of this function, as a function of $(y,y_\tau)$, is also a s.c. function \cite{interior-book} and is calculated from the following formula:
\begin{eqnarray}  \label{phi-phi*-copy-2}
\max_{\gamma>0} \left[ \Phi_* (\gamma y)+y_\tau \gamma + \xi \vartheta \ln \gamma \right].
\end{eqnarray}
The gradient of $\Phi(\frac{z}{\tau})-\xi \vartheta \ln(\tau)$ is
\begin{eqnarray} \label{eq:dd-1-2}
 \left[\begin{array}{c} \frac{1}{\tau} \Phi'(\frac{z}{\tau}) \\ -\frac{1}{\tau^2} \langle \Phi'(\frac{z}{\tau}),z \rangle - \frac{ \xi \vartheta}{\tau}\end{array}  \right].
\end{eqnarray}
By substituting  \eqref{trans-dd-path-1-copy-2}-(c) in  \eqref{trans-dd-path-1-copy-2}-(d) and reordering the terms, we can show that for every $\mu >0$, the solution set of \eqref{trans-dd-path-1-copy-2} corresponds to the solution set of the following system
\begin{eqnarray}  \label{eq:dd-2-2}
\begin{array}{rcl}
\left[ \begin{array}{c} y \\ y_\tau \end{array} \right]  &=& \mu \left[\begin{array}{c} \frac{1}{\tau} \Phi'(\frac{z}{\tau}) \\ -\frac{1}{\tau^2} \langle \Phi'(\frac{z}{\tau}),z \rangle - \frac{ \xi \vartheta}{\tau}\end{array}  \right] , \\
z &=&\tau Ax+z^0,  \\
A^\top y &=& A^\top y^0 - (\tau-1)c,  \\
y_\tau &=& y_{\tau,0} + \tau \langle c,x \rangle.
\end{array}
\end{eqnarray}
Consider the following function:
\begin{eqnarray} \label{eq:dd-29}
\begin{array}{rcl}
&& \Phi \left (\frac{z}{\tau} \right)-\xi \vartheta \ln(\tau) + \max_{\gamma>0} \left[ \Phi_* (\gamma y)+y_\tau \gamma + \xi \vartheta \ln \gamma \right]
-\frac{1}{\mu} \left ( \langle y,z \rangle + \tau y_\tau \right)  \\
&\geq&   \Phi \left (\frac{z}{\tau} \right) -\xi\vartheta \ln \tau +  \Phi_* (\gamma y)+y_\tau \gamma + \xi \vartheta \ln \gamma   -    \frac{1}{\mu} \left ( \langle y,z \rangle + \tau y_\tau \right) , \ \ \ \ \forall \gamma >0,
\end{array}
\end{eqnarray}
where the inequality trivially holds because of the $\max$ function.  Let us substitute $\gamma:= \frac{\tau}{\mu}$, then by using the Fenchel-Young inequality (Theorem \ref{thm:FY})
\begin{eqnarray*} \label{eq:dd-30}
\Phi \left ( \frac{z}{\tau}\right) +  \Phi_* \left (\frac{\tau y}{\mu } \right)  \geq  \langle \frac{\tau y}{\mu} , \frac{z}{\tau} \rangle= \frac{1}{\mu} \langle y,z \rangle,
\end{eqnarray*}
we can continue \eqref{eq:dd-29} as
\begin{eqnarray} \label{eq:dd-31}
\geq   \frac{1}{\mu} \langle y , z \rangle  +\frac{1}{\mu} \tau y_\tau - \xi \vartheta \ln \mu -     \frac{1}{\mu} \left ( \langle y,z \rangle +\tau y_\tau \right) 
\  =- \xi \vartheta \ln \mu.
\end{eqnarray}
Hence, the function is bounded from below for every $\mu>0$. Fix $\mu>0$ and consider the optimization problem 
\begin{eqnarray}  \label{eq:dd-2-2-2}
\begin{array}{cc} 
\min & \Phi \left (\frac{z}{\tau} \right)-\xi \vartheta \ln(\tau) + \max_{\gamma>0} \left[ \Phi_* (\gamma y)+y_\tau \gamma + \xi \vartheta \ln \gamma \right]
-\frac{1}{\mu} \left ( \langle y,z \rangle + \tau y_\tau \right)  \\
\textup{s.t.} & Fz =F z^0,\\
& A^\top y = A^\top y^0 - (\tau-1)c,  \\
& y_\tau = y_{\tau,0} +  \langle c_A, z-z^0 \rangle,
\end{array}
\end{eqnarray}
where $F$ is a matrix whose rows form a basis for the kernel of $A^\top$ and $c_A$ is any vector such that $A^\top c_A =c$.
We have $A^\top y = A^\top y^0 - (\tau-1)c$ iff there exists a vector $v$ such that $y=y^0-(\tau-1) c_A - F^\top v$, so over the feasible region of \eqref{eq:dd-2-2-2} we have
\begin{eqnarray}  \label{eq:dd-mu-alt-1}
\begin{array}{rcl}
\langle y,z \rangle + \tau y_\tau &=& \langle y^0-(\tau-1) c_A - F^\top v , z \rangle +  \tau y_{\tau,0} +  \tau \langle c_A, z-z^0 \rangle   \\
 &=& \langle y^0+c_A , z \rangle - \langle v , Fz^0 \rangle + \tau y_{\tau,0}  -\tau  \langle c_A,z^0 \rangle,\\
    &=& \langle y^0+c_A , z \rangle + \langle y , z^0 \rangle + \tau y_{\tau,0} - \langle y^0+ c_A , z^0 \rangle ,
\end{array}
\end{eqnarray}
which is linear in $(z,\tau,y, y_\tau)$. Therefore, the objective function in \eqref{eq:dd-2-2-2} is the summation of a s.c.\ function, its LF conjugate and another term that we showed is linear on the feasible region. Hence, the objective function is a s.c.\ function \cite{interior-book}. Therefore, \eqref{eq:dd-2-2-2}  is minimizing a non-degenerate s.c.\ function that is bounded from below and so attains its unique minimizer $(\bar z, \bar \tau, \bar y, \bar y_\tau)$ \cite{interior-book}. We claim that $(\bar z, \bar \tau, \bar y, \bar y_\tau)$ satisfies the first equality of \eqref{eq:dd-2-2}. Assume that $\frac{1}{\mu} (\hat y,\hat y_\tau)$ is the image of $(\bar z,\bar \tau)$ under the map \eqref{eq:dd-1-2}. Then, we can check that $(\bar z, \bar \tau, \hat y,\hat y_\tau)$ also satisfies the optimality conditions and by uniqueness, we have $(\hat y,\hat y_\tau)=(\bar y,\bar y_\tau)$. To conclude the proof, $F \bar z=F z^0$ implies that there exists a unique $\bar x$ such that $\bar z = \bar \tau A \bar x + z^0$.  Therefore,  $(\bar x, \bar \tau, \bar y)$ is a solution of \eqref{eq:dd-2-2} and so \eqref{trans-dd-path-1-copy-2} for the fixed $\mu$. Uniqueness follows from the fact that, by using Fenchel-Young inequality (Theorem \ref{thm:FY}), the system \eqref{eq:dd-2-2} implies optimality for \eqref{eq:dd-2-2-2}. 
\end{proof}

Let us finish this section by explaining why following the central path defined above solves the problem for us. First we prove the following key lemma:
\begin{lemma}  \label{lem:dg-bound-1}
Let $(x,\tau,y) \in Q_{DD}$, $\mu=\mu(x,\tau,y)$, and $\hat y := \frac{y}{\tau}$. Then,
\begin{eqnarray} \label{eq:dg-bound-1}
	\ \ \ \    \frac{-y_{\tau,0}}{\tau} - \frac{\xi \mu \vartheta+\mu \kappa \sqrt{\vartheta}}{\tau^2} \leq \langle c,x \rangle +\delta_*(\hat y|D)  \leq \frac{-y_{\tau,0}}{\tau} - \frac{(\xi-1)\mu \vartheta-\mu \kappa \sqrt{\vartheta}}{\tau^2},
	\end{eqnarray}
where $\kappa := \left\|Ax+\frac{1}{\tau} z^0-\Phi'_*\left(\frac{\tau}{\mu} y\right)\right\|_{[\Phi''_*(\frac{\tau}{\mu} y)]^{-1}}$.
\end{lemma}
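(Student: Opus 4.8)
The plan is to estimate $\delta_*(\hat y|D)$ by comparing it to the quantity $\langle \Phi'_*(k\hat y), \hat y\rangle$ that appears in Theorem~\ref{thm:support-fun}, and then to substitute this into the defining identity for $\mu(x,\tau,y)$ from \eqref{eq:dd-4-2}. Concretely, I would first apply \eqref{eq-infeas-1} with $y$ replaced by $\hat y = y/\tau$ and with the scaling parameter $k := \tau^2/\mu$ (so that $k\hat y = (\tau/\mu)y$, which is the argument of $\Phi'_*$ appearing in $\kappa$). This gives
\begin{eqnarray*}
\left\langle \Phi'_*\!\left(\tfrac{\tau}{\mu}y\right), \hat y\right\rangle \;\le\; \delta_*(\hat y|D) \;\le\; \left\langle \Phi'_*\!\left(\tfrac{\tau}{\mu}y\right), \hat y\right\rangle + \frac{\vartheta \mu}{\tau^2}.
\end{eqnarray*}
So the target reduces to controlling $\langle c,x\rangle + \langle \Phi'_*(\frac{\tau}{\mu}y),\hat y\rangle$ from above and below by $\frac{-y_{\tau,0}}{\tau} - \frac{(\xi-1)\mu\vartheta}{\tau^2}$-type expressions, up to the $\mu\kappa\sqrt\vartheta/\tau^2$ slack.

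Next I would bring in the definition of $\mu = \mu(x,\tau,y)$. Rearranging the first line of \eqref{eq:dd-4-2}, $\frac{\xi\vartheta\mu}{\tau} = -y_{\tau,0} - \tau\langle c,x\rangle - \langle y, Ax + \frac1\tau z^0\rangle$, i.e.
\begin{eqnarray*}
\langle c,x\rangle + \frac{1}{\tau}\left\langle y, Ax+\tfrac1\tau z^0\right\rangle = \frac{-y_{\tau,0}}{\tau} - \frac{\xi\vartheta\mu}{\tau^2}.
\end{eqnarray*}
Comparing this with what I want, the discrepancy between $\langle c,x\rangle + \langle \Phi'_*(\frac{\tau}{\mu}y),\hat y\rangle$ and $\langle c,x\rangle + \frac{1}{\tau}\langle y, Ax+\frac1\tau z^0\rangle$ is exactly $\langle \hat y,\ \Phi'_*(\frac{\tau}{\mu}y) - (Ax+\frac1\tau z^0)\rangle$ — call the vector inside $w := \Phi'_*(\frac{\tau}{\mu}y) - (Ax+\frac1\tau z^0)$, so $\kappa = \|w\|_{[\Phi''_*(\frac{\tau}{\mu}y)]^{-1}}$. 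I would bound $|\langle \hat y, w\rangle|$ by the Cauchy–Schwarz inequality \eqref{eq:CS} with $B = \Phi''_*(\frac{\tau}{\mu}y)$ applied at the point $\frac{\tau}{\mu}y$: this yields $|\langle \frac{\tau}{\mu}y, w\rangle| \le \|\frac{\tau}{\mu}y\|_{\Phi''_*(\frac{\tau}{\mu}y)}\,\|w\|^*_{\Phi''_*} \le \sqrt\vartheta\,\kappa$, using \eqref{eq:norm-phi*-1} (which bounds $\Phi''_*(y)[y,y]\le\vartheta$, hence $\Phi''_*(\frac{\tau}{\mu}y)[\frac{\tau}{\mu}y,\frac{\tau}{\mu}y]\le\vartheta$ since that inequality is scale-invariant in its argument). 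Therefore $|\langle \hat y, w\rangle| = \frac{\mu}{\tau^2}|\langle \frac{\tau}{\mu}y, w\rangle| \le \frac{\mu\sqrt\vartheta\,\kappa}{\tau^2}$.

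Assembling the pieces: from the $\mu$-identity, $\langle c,x\rangle + \langle \Phi'_*(\frac{\tau}{\mu}y),\hat y\rangle = \frac{-y_{\tau,0}}{\tau} - \frac{\xi\vartheta\mu}{\tau^2} + \langle \hat y, w\rangle$, and combining with the two-sided Theorem~\ref{thm:support-fun} estimate for $\delta_*$ plus the $\pm\frac{\mu\sqrt\vartheta\kappa}{\tau^2}$ bound on $\langle\hat y,w\rangle$ gives both inequalities in \eqref{eq:dg-bound-1} at once (the lower bound uses only the left half of the $\delta_*$ estimate and $\langle\hat y,w\rangle \ge -\frac{\mu\sqrt\vartheta\kappa}{\tau^2}$; the upper bound adds the $\frac{\vartheta\mu}{\tau^2}$ term, turning $-\xi$ into $-(\xi-1)$, and uses $\langle\hat y,w\rangle \le \frac{\mu\sqrt\vartheta\kappa}{\tau^2}$). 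I expect the only real subtlety to be bookkeeping with the scaling parameter $k=\tau^2/\mu$ and making sure the homogeneity of \eqref{eq:norm-phi*-1} is invoked correctly at the point $\frac{\tau}{\mu}y$ rather than at $\hat y$ — everything else is routine substitution and one application of Cauchy–Schwarz.
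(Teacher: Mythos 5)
Your proposal is correct and follows essentially the same route as the paper: apply Theorem \ref{thm:support-fun} with $k=\tau^2/\mu$ to $\hat y$, split off the term $\langle \hat y, \Phi'_*(\tfrac{\tau}{\mu}y)-(Ax+\tfrac1\tau z^0)\rangle$, bound it by $\tfrac{\mu\kappa\sqrt\vartheta}{\tau^2}$ via Cauchy--Schwarz together with $\|\tfrac{\tau}{\mu}y\|_{\Phi''_*(\frac{\tau}{\mu}y)}\le\sqrt\vartheta$, and finish with the first line of \eqref{eq:dd-4-2}. One cosmetic remark: the bound $\Phi''_*(\tfrac{\tau}{\mu}y)[\tfrac{\tau}{\mu}y,\tfrac{\tau}{\mu}y]\le\vartheta$ is not a matter of ``scale-invariance'' of \eqref{eq:norm-phi*-1} but simply the fact that \eqref{eq:norm-phi*-1} holds at every point of $\inte D_*$ and $\tfrac{\tau}{\mu}y\in\inte D_*$ because $D_*$ is a cone.
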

\begin{proof}
By applying Theorem \ref{thm:support-fun} to $k:=\frac{\tau^2}{\mu}$ and $\hat y$ we get
\begin{eqnarray} \label{eq:lem:dg-bound-1-3}
 \langle  \hat y, \Phi'_*\left( \frac{\tau y}{\mu}\right) \rangle \leq \delta_*(\hat y|D) \leq \langle  \hat y, \Phi'_*\left( \frac{\tau y}{\mu}\right) \rangle + \frac{\mu \vartheta }{\tau^2}.
\end{eqnarray}
Note that by adding and subtracting a term we have
\begin{eqnarray} \label{eq:lem:dg-bound-1-4}
 \langle  \hat y, \Phi'_*\left( \frac{\tau y}{\mu}\right) \rangle= \langle  \hat y,  \Phi'_*\left( \frac{\tau y}{\mu}\right) - \left ( Ax+\frac{1}{\tau}z^0 \right) \rangle + \langle   \hat y, Ax+\frac{1}{\tau}z^0  \rangle.
\end{eqnarray}
Now by using the fact that $\|\frac{\tau y}{\mu}\|_{\Phi''_*(\frac{\tau}{\mu} y)}  \leq \sqrt{\vartheta}$ (property \eqref{eq:norm-phi*-1} or \cite{interior-book}-Theorem 2.4.2), definition of $\kappa$, and using CS inequality \eqref{eq:CS} we get
\begin{eqnarray} \label{eq:lem:dg-bound-1-5}
 -\frac{\mu \kappa}{\tau^2} \sqrt{\vartheta} \leq \langle  \hat y,  \Phi'_*\left( \frac{\tau y}{\mu}\right) - \left ( Ax+\frac{1}{\tau}z^0 \right) \rangle \leq \frac{\mu \kappa}{\tau^2} \sqrt{\vartheta}.
\end{eqnarray}
By substituting \eqref{eq:lem:dg-bound-1-5} in \eqref{eq:lem:dg-bound-1-4} and the result in \eqref{eq:lem:dg-bound-1-3} we get
\begin{eqnarray} \label{eq:lem:dg-bound-1-6}
\langle   \hat y, Ax+\frac{1}{\tau}z^0  \rangle - \frac{\mu \kappa}{\tau^2} \sqrt{\vartheta}  \leq \delta_*(\hat y|D) \leq \langle   \hat y, Ax+\frac{1}{\tau}z^0  \rangle + \frac{\mu \kappa}{\tau^2} \sqrt{\vartheta} + \frac{\mu \vartheta }{\tau^2}.
\end{eqnarray}
 By the first line of \eqref{eq:dd-4-2}, we have $\langle   \hat y, Ax+\frac{1}{\tau}z^0  \rangle = -\frac{\xi \vartheta \mu}{\tau^2}-\frac{1}{\tau}y_{\tau,0}- \langle c,x \rangle$. Putting this in  \eqref{eq:lem:dg-bound-1-6} gives us \eqref{eq:dg-bound-1}
\end{proof}

\subsection{Brief interpretation of outcomes of the algorithm} \label{subsec:inter}
  A given problem may have one of several possible statuses. Just in terms of primal feasibility, when the problem is feasible, we can have strict feasibility ($\img A \cap \inte D \neq \emptyset$) or otherwise weak feasibility. When the problem is infeasible, we can have weak infeasibility (an arbitrarily small perturbation makes it feasible), or otherwise strict infeasibility. The same analysis also applies to dual feasibility.  Next, we discuss what can be said in two of the possible cases about the problem  based on the value of $\tau$, using Lemma \ref{lem:dg-bound-1}:
\begin{enumerate}  [(i)]
	\item {\bf $\lim_{\mu \rightarrow +\infty}\tau = +\infty$ such that $\frac{\mu}{\tau^2}$ tends to zero:} Then $x$ converges to a point $\hat x$ that satisfies $A\hat x \in D$ and $\frac{y}{\tau}$ converges to a point $\hat y \in D_*$ that satisfies $A^\top \hat y =-c$. Moreover, Lemma \ref{lem:dg-bound-1} implies that the duality gap $\langle c,\hat x \rangle +\delta_*(\hat y|D)$ is zero. Therefore, by Lemma \ref{lem:dd-7}, $\hat x$ is an optimal solution of the problem.
	
	\item {\bf $\tau$ stays bounded when $\mu \rightarrow +\infty$:} In this case, Lemma \ref{lem:dd-7} shows that $\langle c,x \rangle+\frac{1}{\tau}\delta_*\left(y|D\right)$ tends to $-\infty$. If $\langle c,x \rangle$ tends to $-\infty$, we can argue that the problem is unbounded. If $\langle c,x \rangle$ stays bounded, then Lemma \ref{lem:dd-7} implies that $\bar y :=\lim_{\mu\rightarrow +\infty} \frac{\tau y}{\mu}$ satisfies $\delta_*(\bar y|D) <0$ and we also have $A^\top \bar y=0$. Such a $\bar y$ implies primal infeasibility; otherwise, if there exists $A\bar x \in D$, then we have the following contradiction. 
\[
0 > \delta_*(\bar y|D)   \underbrace{\geq}_{\text{definition of $\delta_*(\bar y|D)$}} \langle \bar  y, A \bar x \rangle = \langle A^\top \bar  y,  \bar x \rangle = 0. 
\]
\end{enumerate} 
Now the question is which statuses make the above cases happen? What is the behavior of $(x,\tau,y)$ when $\mu$ tends to $+\infty$? Answering these questions requires scrutinizing the geometry of the primal and dual problems and a careful categorization of the possible statuses \cite{karimi_status_arxiv, karimi_thesis}. For example, it is proved in \cite{karimi_status_arxiv, karimi_thesis}  that if the problem is strictly primal and dual feasible (and in a weaker sense if it is just solvable), there exists a parameter $\omega>0$ (depending on the geometry of the problem) such that $\tau \geq \omega \mu$ for all the points close to the central path. This implies that the first of the above cases happens when $\mu \rightarrow +\infty$ and our algorithms return an optimal solution. Also different infeasibility and unboundedness statuses are defined in \cite{karimi_status_arxiv, karimi_thesis} and it is shown that in these cases variable $\tau$ is bounded, the second of the above cases happens, and we can extract (approximate and under some conditions exact) certificates of infeasibility or unboundedness out of $(x,\tau,y)$ when $\mu \rightarrow +\infty$. In this general discussion, the ill-conditioned cases (such as both primal and dual are feasible, but the duality gap is not zero) are ignored, which are defined and considered rigorously in \cite{karimi_status_arxiv}. 
\section{Algorithms}\label{sec:alg}
In the previous section, we defined our infeasible-start primal-dual central path, parameterized with $\mu$. In this section, we express a predictor-corrector path-following algorithm that efficiently follows the path to $\mu=+\infty$. 
To define neighborhoods of the central path, we need a notion of proximity. For a point $(x,\tau,y) \in Q_{DD}$, defined in \eqref{QDD-copy-2}, we define a proximity measure as
\begin{eqnarray} \label{DD-proximity-1-1-2}
\Omega_\mu(x,\tau,y) 
                  &:=& \Phi \left ( Ax+\frac{1}{\tau} z^0 \right) +  \Phi_* \left (\frac{\tau y}{\mu } \right) -  \frac{\tau}{\mu} \langle y , A x+ \frac{1}{\tau}z^0 \rangle, \nonumber \\
\mu &:=&\mu(x,\tau,y), \ \ \text{as defined in \eqref{eq:dd-4-2}}.
\end{eqnarray}
Throughout the paper, we may drop the arguments of $\Phi$ and $\Phi_*$ (and also their gradients and Hessians) for simplicity, i.e., $\Phi:=\Phi \left(Ax+\frac{1}{\tau} z^0\right)$ and $\Phi_*:=\Phi_* \left(\frac{\tau}{\mu} y\right)$. 
\begin{remark}
\emph{The proximity measure used for the feasible-start case \cite{cone-free} is 
\begin{eqnarray} \label{eq:cf-proximity}
\Phi(Ax) + \Phi_*(y)-\langle y,Ax \rangle. 
\end{eqnarray}
Even though this proximity measure and \eqref{DD-proximity-1-1-2} have similar structures (indeed for $z^0=0$ and $\tau=\mu$, we recover \eqref{eq:cf-proximity}), $\tau$ and $\mu$ bring nonlinearity into the arguments of $\Phi$ and $\Phi_*$ in \eqref{DD-proximity-1-1-2}. }
\end{remark}
\begin{theorem}  \label{thm:prox-1}
For every $(x,\tau,y) \in Q_{DD}$ and $\mu >0$ we have $\Omega_\mu(x,\tau,y) \geq 0$. Moreover, $\Omega_\mu(x,\tau,y) = 0$ with $\mu=\mu(x,\tau,y)$ iff  $(x,\tau,y)$ is on the central path for parameter $\mu(x,\tau,y)$. 
\end{theorem}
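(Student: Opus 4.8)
The plan is to recognize $\Omega_\mu(x,\tau,y)$ as a Fenchel--Young gap and then to unwind exactly which pairing is being estimated. First I would fix $(x,\tau,y) \in Q_{DD}$ and write $z := Ax + \frac{1}{\tau}z^0 \in \inte D$ and $w := \frac{\tau}{\mu} y \in \inte D_*$. Then $\Omega_\mu(x,\tau,y) = \Phi(z) + \Phi_*(w) - \langle w, z \rangle$, since $\frac{\tau}{\mu}\langle y, z \rangle = \langle w, z\rangle$. The Fenchel--Young inequality (Theorem~\ref{thm:FY}) applied to the s.c.\ barrier $\Phi$ and its LF conjugate $\Phi_*$ gives $\Phi(z) + \Phi_*(w) \geq \langle w, z\rangle$ for all $z \in \dom\Phi$, $w \in \dom\Phi_*$, which is precisely $\Omega_\mu(x,\tau,y) \geq 0$. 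This establishes the first claim for arbitrary $\mu > 0$ and arbitrary $(x,\tau,y)\in Q_{DD}$, independent of the relation between $\mu$ and $\mu(x,\tau,y)$.

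For the equality characterization, recall that Fenchel--Young holds with equality exactly when $w = \Phi'(z)$ (equivalently $z = \Phi'_*(w)$). So $\Omega_\mu(x,\tau,y) = 0$ with $\mu = \mu(x,\tau,y)$ is equivalent to
\begin{eqnarray*}
\frac{\tau}{\mu} y = \Phi'\left(Ax + \frac{1}{\tau}z^0\right), \qquad \mu = \mu(x,\tau,y),
\end{eqnarray*}
which is exactly condition \eqref{trans-dd-path-1-copy-2}-(c) together with $\mu = \mu(x,\tau,y)$. The remaining task is to show that for $(x,\tau,y) \in Q_{DD}$, conditions \eqref{trans-dd-path-1-copy-2}-(a) and \eqref{trans-dd-path-1-copy-2}-(b) already hold (they are built into the definition of $Q_{DD}$ in \eqref{QDD-copy-2}), and that the remaining central-path equation \eqref{trans-dd-path-1-copy-2}-(d) is automatically satisfied once \eqref{trans-dd-path-1-copy-2}-(c) holds and $\mu$ is set to $\mu(x,\tau,y)$. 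For this last point I would substitute \eqref{trans-dd-path-1-copy-2}-(c) into the defining formula \eqref{eq:dd-4-2} for $\mu(x,\tau,y)$ and check, by the same rearrangement used in deriving \eqref{eq:dd-2-2} from \eqref{trans-dd-path-1-copy-2}, that the identity $\mu = \mu(x,\tau,y)$ is algebraically equivalent to \eqref{trans-dd-path-1-copy-2}-(d); in particular, using property \eqref{eq:norm-phi*-1} in the form $\langle \Phi'_*(w), w\rangle$-type relations is not even needed here, only the logarithmic-homogeneity-free bookkeeping of $\langle y, z\rangle$ and $y_{\tau,0} + \tau\langle c,x\rangle$. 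Hence $\Omega_\mu(x,\tau,y)=0$ with $\mu=\mu(x,\tau,y)$ is equivalent to $(x,\tau,y)$ satisfying all of \eqref{trans-dd-path-1-copy-2}, i.e.\ to $(x,\tau,y)$ lying on the central path for the parameter $\mu(x,\tau,y)$, invoking uniqueness from Theorem~\ref{thm:cent-path} to identify the parameter.

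The main obstacle I anticipate is the bookkeeping in the equality direction: showing that \eqref{trans-dd-path-1-copy-2}-(d) is a consequence of \eqref{trans-dd-path-1-copy-2}-(c) plus $\mu = \mu(x,\tau,y)$ (and the $Q_{DD}$ constraints), rather than an independent condition. This is the step where one must be careful that the quadratic-in-$\tau$ versus linear-in-$\tau$ phenomenon noted after \eqref{eq:dd-4-2} is handled correctly, using the dual feasibility relation $A^\top y - A^\top y^0 = -(\tau-1)c$ to pass between the second and third lines of \eqref{eq:dd-4-2}. Everything else — nonnegativity, and the ``only if'' direction that equality forces $w = \Phi'(z)$ — is an immediate application of Fenchel--Young and requires no computation.
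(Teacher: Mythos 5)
Your proposal is correct and follows essentially the same route as the paper, whose proof is exactly this one-line observation: nonnegativity is Fenchel--Young applied to $\Phi$ at $Ax+\frac{1}{\tau}z^0$ and $\Phi_*$ at $\frac{\tau}{\mu}y$, and equality forces the gradient condition \eqref{trans-dd-path-1-copy-2}-(c), while (a), (b) are built into $Q_{DD}$. The ``obstacle'' you anticipate is in fact immediate: for $(x,\tau,y)\in Q_{DD}$, equation \eqref{trans-dd-path-1-copy-2}-(d) is literally a rearrangement of the first line of \eqref{eq:dd-4-2}, so it is equivalent to $\mu=\mu(x,\tau,y)$ without invoking (c) or the linear-versus-quadratic-in-$\tau$ discussion at all.
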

\begin{proof}
Both parts of the theorem are implied by Fenchel-Young inequality (Theorem \ref{thm:FY}) and the definition of the central path. 
\end{proof}
Now, we can state a predictor-corrector algorithm. Note that we choose different step sizes for $x$ and for $(\tau,y)$, i.e., for a search direction $(d_x,d_\tau,d_y)$, the updates are
\begin{eqnarray} \label{eq:update}
x^+:= x+\alpha_1 d_x,   \ \ \ \tau^+:=\tau+\alpha_2 d_\tau,   \ \ \ y^+:= y + \alpha_2 d_y. 
\end{eqnarray}
\noindent\rule{16cm}{0.6pt}\\
\noindent \textbf{Framework for Predictor-Corrector Algorithms} \vspace{-0.3cm} \newline
\noindent\rule{16cm}{0.6pt}\\
\noindent{\bf Input:} $A \in \R^{m \times n}$, $c \in \R^n$, neighborhood parameters  $\delta_1,\delta_2 \in (0,1)$ such that $\delta_1 < \delta_2$, desired tolerance $tol \in (0,1)$. Access to gradient and Hessian oracles for a $\vartheta$-s.c.\ barrier $\Phi$ with $\vartheta \geq 1$ and domain of $\Phi$ equal to $\inte D$. Access to the LF conjugate $\Phi_*$ of $\Phi$, and $z^0 \in \inte D$. \\
\noindent {\bf Initialization:} $k:=0$, $y^0:=\Phi'(z^0)$,  $x^0:=0$, $\tau_0:=1$, and $\mu_0:=\mu(x^0,\tau_0,y^0)$. \\
\noindent {\bf while} (the stopping criteria are not met) 
\begin{addmargin}{1cm}
{\bf if } ($\Omega_{\mu_k}(x^k,\tau_k,y^k) > \delta_1$)
\end{addmargin}
\begin{addmargin}{1.5cm}
Calculate the corrector search direction $(d_x,d_\tau,d_y)$, choose $(\alpha_1,\alpha_2)\in \R^2_+$, and apply the update in \eqref{eq:update} to get  $(x^{k+1},\tau_{k+1},y^{k+1})$, such that $\Omega_{\mu_k}(x^{k+1},\tau_{k+1},y^{k+1})$ is smaller than $\Omega_{\mu_k}(x^k,\tau_k,y^k)$ by a ``large enough" amount. Define  $\mu_{k+1}:=\mu_k$.
\end{addmargin}
\begin{addmargin}{1cm}
{\bf if } ($\Omega_{\mu_k}(x^k,\tau_k,y^k) \leq \delta_1$)
\end{addmargin}
\begin{addmargin}{1.5cm}
 Calculate the predictor search direction $(d_x,d_\tau,d_y)$, choose $(\alpha_1,\alpha_2)\in \R^2_+$, and apply the update in \eqref{eq:update} to get  $(x^{k+1},\tau_{k+1},y^{k+1})$, such that  $\mu_{k+1}:=\mu(x^{k+1},\tau_{k+1},y^{k+1})$ is larger than $\mu_k$ by a ``large enough" amount, while \\
$
\Omega_{\mu_{k+1}}(x^{k+1},\tau_{k+1},y^{k+1})  \leq \delta_2.
$
\end{addmargin}
\begin{addmargin}{1cm}
$k \leftarrow k+1$.
\end{addmargin}
\noindent {\bf end while}\\
\noindent\rule{16cm}{0.6pt}

The best choices for $(\alpha_1,\alpha_2)$ are achieved by a plane search. However, for simplicity of the analysis, it is enough for both the predictor and corrector steps to choose $\alpha_1:=\frac{\alpha_2}{\tau+\alpha_2 d_\tau}$, where $\alpha_2$ is chosen such that $\tau+\alpha_2 d_\tau >0$. Then, our search space becomes 1-dimensional and we can choose $\alpha_2$ large enough to get the desired complexity bounds.

Next, we discuss how to calculate the search directions and choose the step lengths. 
The Dikin ellipsoid property \cite{interior-book} (see Appendix \ref{appen-s.c.}) is perhaps the most fundamental property of s.c.\ functions. This elegant property implies that we can move all the way to the boundary of Dikin ellipsoid and stay feasible. A challenge in our Domain-Driven setup is the nonlinear way that $\tau$ is combined with $x$ and $y$, for example in the proximity measure \eqref{DD-proximity-1-1-2}. What typically appears in a primal-dual proximity measure in the literature is the summation of the s.c.\ barrier and its LF conjugate composed with an affine function of the variables, which makes the algorithm and its analysis easier. The positive definite matrix that defines the Dikin ellipsoid for our algorithm has a special form that controls the nonlinear displacements in the arguments of $\Phi$ and $\Phi_*$ in the proximity measure. 

Let us define $\bar H(x,\tau)$ as  follows (with $u:=Ax+\frac{1}{\tau}z^0$)
\begin{eqnarray} \label{hessian-phi-2}
\ \ \ \ \bar H(x,\tau):= \left [ \begin{array} {cc}  \underbrace{\frac{1}{\tau^2} \Phi'' \left(u \right)}_{=:H}  &   \underbrace{\frac{-1}{\tau^2} \Phi''\left(u\right) u -\frac{1}{\tau^2} \Phi'\left(u\right)}_{=:h} \\  
                   { \left[ \frac{-1}{\tau^2} \Phi''\left(u\right) u -\frac{1}{\tau^2} \Phi'\left(u\right)\right]}^\top   &   \underbrace{\frac{2}{\tau^2} \langle \Phi'\left(u\right) , u \rangle +  
                    \frac{1}{\tau^2} \langle u, \Phi''\left(u\right)  u \rangle + \frac{\xi \vartheta}{\tau^2}}_{=:\zeta} \end{array} \right].
\end{eqnarray}
\begin{remark}  \label{rem-1}
\emph{If we replace $u$ with $\frac{z}{\tau}$ in \eqref{hessian-phi-2}, we get the Hessian for the function $ \Phi \left ( \frac{z}{\tau} \right)+\xi \vartheta \ln(\tau)$. }
\end{remark}
One can easily verify that for every $(d,d_\tau) \in \R^m \oplus \R$ we have
\begin{eqnarray}\label{hessian-phi-22}
\bar H(x,\tau) [(d,d_\tau), (d,d_\tau)]=  \left \|\frac{d}{\tau}-\frac{d_\tau}{\tau} u \right \|^2_{\Phi''(u)} - \frac{2d_\tau}{\tau} \left [\frac{d}{\tau}-\frac{d_\tau}{\tau} u \right ]^\top \Phi'(u)+ \xi \frac{d_\tau^2}{\tau^2} \vartheta.
\end{eqnarray}
By using the definition of s.c barriers \cite{interior-book} (see \eqref{property-2}) for the second term in the RHS of \eqref{hessian-phi-22}, we have
\begin{eqnarray}\label{hessian-phi-2-3}
 \left [ \left \|\frac{d}{\tau}-\frac{d_\tau}{\tau} u \right \|_{\Phi''(u)} - \left|\frac{d_\tau}{\tau}\right| \sqrt{\vartheta}  \right]^2+ (\xi-1) \frac{d_\tau^2}{\tau^2} \vartheta  \leq \bar H(x,\tau) [(d,d_\tau), (d,d_\tau)],
\end{eqnarray}
which shows that $\bar H(x,\tau)$ is a positive definite matrix for every $\xi >1$ and so  invertible. Considering the definition of $H$, $h$, and $\zeta$ in \eqref{hessian-phi-2}, by substitution, one can directly verify that for every $(w,w_\tau) \in \R^m \oplus \R$, we have
\begin{eqnarray} \label{inverse-formula-2-2}
\left [\begin{array}{c}  w  \\  w_\tau  \end{array} \right]^\top \left ( \left [\begin{array} {cc} H & h \\ h^\top & \zeta \end{array} \right] \right ) ^{-1}  \left [\begin{array}{c}  w  \\  w_\tau  \end{array} \right] =  
\langle w, H^{-1}w \rangle + \eta  \left( \langle w, H^{-1}h \rangle - w_\tau \right)^2,  \nonumber \\
H^{-1} h = -u - [\Phi'']^{-1} \Phi', \ \ \ \ 
\eta = \frac{\tau^2}{{\xi \vartheta } -  \langle \Phi', [\Phi'']^{-1} \Phi' \rangle }.
\end{eqnarray}
Note that $\eta\geq 0$ by using $\xi >1$ and $ \langle \Phi', [\Phi'']^{-1} \Phi' \rangle \leq \vartheta$ for $\vartheta$-s.c.\ barriers (see \eqref{property-2-2}). The following key lemma, which we prove later, shows how the spectrum of $\bar H(x,\tau)$ is bounded close to the central path. 
\begin{lemma}  \label{lem:hessian-close-1}
	For every  $\bar \epsilon \in (0,1)$, there exist $\epsilon >0$ depending on $\xi$ such that for every pair $(x,\tau,y) \in Q_{DD}$ and $\mu>0$ with $\Omega_\mu(x,\tau,y) \leq \epsilon$, we have
	\begin{eqnarray}  \label{eq:lem:hessian-close-1-1}
	(1-\bar \epsilon)^2   \bar H(x(\mu),\tau(\mu))   \preceq  \bar H(x,\tau)     \preceq \frac{1}{(1-\bar \epsilon)^2}   \bar H(x(\mu),\tau(\mu)).
	\end{eqnarray}
\end{lemma}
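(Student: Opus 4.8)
The plan is to reduce the matrix inequality \eqref{eq:lem:hessian-close-1-1} to a statement about the Hessian of a single self-concordant function evaluated at two nearby points, and then invoke the standard Dikin-ellipsoid/Hessian-stability estimate for s.c. functions (see Appendix \ref{appen-s.c.} and \cite{interior-book}). By Remark \ref{rem-1}, $\bar H(x,\tau)$ is, up to the change of variables $z = \tau(Ax + \frac{1}{\tau}z^0) - z^0 = \tau A x$, precisely the Hessian of the s.c. function $g(z,\tau) := \Phi(z/\tau) + \xi\vartheta\ln(\tau)$ (well, of $\Phi(z/\tau) - \xi\vartheta\ln\tau$; the sign on the log term only contributes $\xi\vartheta/\tau^2$ to the $\tau\tau$-entry either way, which is already reflected in $\zeta$). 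So the first step is to set up this correspondence carefully: write $\psi(x,\tau) := \Phi(Ax + \frac1\tau z^0) - \xi\vartheta\ln\tau$, observe that its Hessian in the $(x,\tau)$ coordinates equals $\bar H(x,\tau)$ composed with the linear embedding $x \mapsto Ax$ (here we use $\ker A = \{0\}$ so the embedding is injective and preserves positive definiteness), and note that $\psi$ is a (non-degenerate) s.c. function on its domain — this is exactly the function appearing at the start of the proof of Theorem \ref{thm:cent-path}, citing \cite{interior-book}-Proposition 5.1.4.

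The second step is to control the local distance, in the intrinsic metric of $\psi$, between the point $(x,\tau)$ and the central-path point $(x(\mu),\tau(\mu))$. Here is where the proximity measure $\Omega_\mu$ enters. The key observation is that $\Omega_\mu(x,\tau,y)$ is (by its very definition \eqref{DD-proximity-1-1-2} and the Fenchel--Young structure in Theorem \ref{thm:prox-1}) a Bregman-type distance built from $\psi$ and its LF conjugate: small $\Omega_\mu$ forces the primal slack $Ax + \frac1\tau z^0$ and the point $\Phi'_*(\frac\tau\mu y)$ to be close in the local Hessian norm, and simultaneously (via the dual feasibility constraints defining $Q_{DD}$ and the definition of $\mu(x,\tau,y)$) pins down $\tau$. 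Quantitatively, I would show that $\Omega_\mu(x,\tau,y) \le \epsilon$ implies a bound of the form $\|(x,\tau) - (x(\mu),\tau(\mu))\|_{\psi''(x(\mu),\tau(\mu))} \le r(\epsilon)$ with $r(\epsilon) \to 0$ as $\epsilon \to 0$; the dependence of $r$ (hence of $\epsilon$) on $\xi$ comes from the coercivity constant $(\xi - 1)$ visible in \eqref{hessian-phi-2-3} and \eqref{inverse-formula-2-2}, which bounds $\eta$ and keeps the $\tau$-direction non-degenerate. This uses that $\Omega_\mu$ vanishes exactly on the central path (Theorem \ref{thm:prox-1}) together with strong convexity of $\psi + \psi_*$-type combination near its minimizer, i.e. essentially the same argument that gave existence/uniqueness in Theorem \ref{thm:cent-path}.

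Once the intrinsic distance is bounded by $r < 1$, the final step is routine: the standard s.c. Hessian-stability inequality gives $(1-r)^2 \psi''(x(\mu),\tau(\mu)) \preceq \psi''(x,\tau) \preceq (1-r)^{-2}\psi''(x(\mu),\tau(\mu))$, and translating back through the injective embedding $x \mapsto Ax$ yields \eqref{eq:lem:hessian-close-1-1} with $\bar\epsilon := r(\epsilon)$; choosing $\epsilon$ small enough (depending on $\xi$) makes $r(\epsilon) \le \bar\epsilon$ for the prescribed $\bar\epsilon$. I expect the main obstacle to be the second step: converting the "energy" bound $\Omega_\mu \le \epsilon$ into a genuine $O(\sqrt\epsilon)$ bound on the intrinsic displacement. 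Unlike the usual conic situation, the arguments of $\Phi$ and $\Phi_*$ depend nonlinearly on $\tau$, so one cannot directly read $\Omega_\mu$ as a Bregman divergence of a fixed convex function along a line; one has to either linearize in $(x,\tau,y)$ around the central-path point and absorb the higher-order terms using self-concordance, or work with the function $g(z,\tau)$ and its conjugate and show the feasibility constraints in $Q_{DD}$ cut out an affine slice on which $\Omega_\mu$ becomes a bona fide Bregman distance (this is the content of \eqref{eq:dd-mu-alt-1} in the proof of Theorem \ref{thm:cent-path}). Keeping explicit track of how all constants depend on $\xi$ — and only on $\xi$ — throughout this estimate is the delicate bookkeeping part.
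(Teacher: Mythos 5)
Your frame is the right one and matches the paper's: identify $\bar H(x,\tau)$ with the Hessian of the lifted s.c.\ function $\Phi(z/\tau)-\xi\vartheta\ln\tau$ (Remark \ref{rem-1}, Lemma \ref{lem:dd-9}), bound the intrinsic distance of the current point from $(x(\mu),\tau(\mu))$, and finish with the Hessian-stability estimate \eqref{property-3}. But the decisive middle step --- turning $\Omega_\mu(x,\tau,y)\le\epsilon$ into an $O(\sqrt\epsilon)$ bound on $\|(z,\tau)-(z(\mu),\tau(\mu))\|$ in the local norm at the central-path point --- is exactly what you leave as ``I would show,'' and the two routes you sketch (linearize and absorb higher-order terms, or exhibit an affine slice on which $\Omega_\mu$ is a Bregman distance) are not carried out; you yourself flag this as the main obstacle. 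That step is the actual content of the lemma, so as written the proposal has a genuine gap.

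The missing ideas are concrete. First, one needs to pass from $\Omega_\mu$, which only controls the residual $\bigl\|\tfrac{\tau y}{\mu}-\Phi'(u)\bigr\|_{[\Phi''(u)]^{-1}}\le\sigma(\epsilon)$ (Corollary \ref{coro:bound-prox-1}), to a residual in the \emph{extended} $(z,\tau)$-space measured in $[\bar H]^{-1}$; this is Lemma \ref{lem:dd-8}, which uses the identity $y_\tau+\tfrac1\tau\langle y,z\rangle+\tfrac{\mu\xi\vartheta}{\tau}=0$ coming from the definition \eqref{eq:dd-4-2} of $\mu$, together with the $(\xi-1)$-coercivity in \eqref{inverse-formula-2-2}, and costs only a factor $\sqrt{\xi/(\xi-1)}$. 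Second --- and this is the idea your ``strong convexity near the minimizer'' appeal does not supply --- the Fenchel--Young gap for $f=\Phi(z/\tau)-\xi\vartheta\ln\tau$ and $f_*$ at the current pair is bounded \emph{above} via the residual at the current point (Lemma \ref{lem:prox-LN}(a)) and \emph{below} by $\rho$ of the displacement from the central-path pair (Lemma \ref{lem:prox-LN}(b)); the lower bound requires the orthogonality $\langle y-y(\mu),\,z-z(\mu)\rangle+(y_\tau-y_\tau(\mu))(\tau-\tau(\mu))=0$, which the paper verifies in \eqref{eq:lem:hessian-close-1-3} directly from the affine dual-feasibility constraints defining $Q_{DD}$ (this is the precise sense in which the affine structure rescues the nonlinearity in $\tau$ that worries you). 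Since the two bounds sandwich the same quantity and the central-path pair satisfies $\tfrac1\mu(y(\mu),y_\tau(\mu))=f'(z(\mu),\tau(\mu))$, one gets $\rho(r)\le\bar\xi\,\rho(-\beta/\sqrt{\bar\xi})$ and hence the desired $r=O(\sqrt\epsilon)$ with constants depending only on $\xi$. Without identifying this sandwich and the orthogonality identity, the proposal does not yet constitute a proof.
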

Let $F$ be a matrix whose rows give a basis for the kernel of $A^\top$ and let $c_A$ be any vector such that $A^\top c_A=c$.  We define a block matrix $U$ that comes up frequently in our discussion and contains the linear transformations we need, and also the vector $r^0$ that is used in the RHS of our systems: 
\begin{eqnarray}  \label{eq:mat-U}
U:=\left [\begin{array}{ccc}  
A & 0 & 0 \\ 0 & 1 & 0 \\ 0 & -c_A & -F^\top \\ c^\top & 0 & 0 \end{array} \right], \ \ \ \
r^0:= \left[\begin{array}{c} -A^\top y^0 -  c \\ -y_{\tau,0}+ \langle c_A, z^0\rangle \\ Fz^0 \end{array} \right]. 
\end{eqnarray}
At a current point $(x,\tau,y)$, both the predictor and corrector steps are derived by solving the system 
\begin{eqnarray} \label{eq:system-pred-1}
U^\top  \underbrace{ \left[ \begin{array}{cc} \bar H(x,\tau)  &  0  \\  0  &  \left [\hat H(x,\tau,y) \right] ^{-1} \end{array} \right]}_{\cH(\bar H, \hat H)} U  \left [ \begin{array}{c}  \bar d_x \\ d_\tau \\ d_v   \end{array}\right]=r_{RHS}, 
\nonumber \\
d_x:=\bar d_x-d_\tau x,  \ \ \ \ \   d_y:=-d_\tau c_A - F^\top d_v,
\end{eqnarray}
where $\hat H(x,\tau,y)$ is a positive definite matrix that we elaborate more on later. For both the predictor and corrector steps, we discuss the choice of $\hat H(x,\tau,y)$ and $r_{RHS}$ in \eqref{eq:system-pred-1}.
Before discussing the computational aspects of system \eqref{eq:system-pred-1}, let us elaborate more on the LHS matrix of this system. In both the predictor and corrector steps, we have a vector $d^\top:= [\bar d_x^\top \  \ d_\tau \ \ d_v^\top]$ as the solution of \eqref{eq:system-pred-1} which satisfies $d^\top U^\top \cH(\bar H, \mu^2 \bar H) U d \leq q$ for $\cH$ defined in \eqref{eq:system-pred-1} and a scalar $q$ which takes different values in our analyses.
Let us define
\begin{eqnarray*} \label{eq:lem:ana-1-4}
	f:=Ud=U \left [ \begin{array}{c} \bar d_x  \\ d_\tau \\ d_v   \end{array}\right] \underbrace{=}_{\text{\eqref{eq:mat-U}}} \left[ \begin{array}{c} A \bar d_x  \\ d_\tau \\ -d_\tau c_A-F^\top d_v \\ \langle c, \bar d_x \rangle   \end{array}\right] \underbrace{=}_{\text{\eqref{eq:system-pred-1}}}\left[ \begin{array}{c} A \bar d_x  \\ d_\tau \\ d_y \\ \langle c, \bar d_x \rangle   \end{array}\right].
\end{eqnarray*}
Using \eqref{hessian-phi-2-3} and \eqref{inverse-formula-2-2}, $f^\top  \cH(\bar H, \mu^2 \bar H) f \leq q$ yields
\begin{eqnarray}  \label{eq:lem:ana-1-1}
&& \left [ \left \|\frac{A\bar d_x}{\tau}-\frac{d_\tau}{\tau} \left ( Ax+\frac{1}{\tau} z^0 \right) \right \|_{\Phi''} - \left|\frac{d_\tau}{\tau}\right| \sqrt{\vartheta}  \right]^2+ (\xi-1) \frac{d_\tau^2}{\tau^2} \vartheta   \nonumber \\
&+& \frac{\tau^2}{\mu^2} \langle d_y, [\Phi'']^{-1}d_y \rangle + \frac{\left[\langle \frac{\tau d_y}{\mu},[\Phi'']^{-1} \Phi' \rangle + \frac{\tau}{\mu} (\langle d_y, Ax+\frac{1}{\tau} z^0 \rangle + \langle c, \bar d_x \rangle)\right]^2}{{\xi \vartheta } -  \langle \Phi', [\Phi'']^{-1} \Phi' \rangle }  \leq q.
\end{eqnarray}
There are four nonnegative terms in the LHS of \eqref{eq:lem:ana-1-1} all bounded by $q$ which we will break down to extract the required bounds for our analyses. In other words, the choice of matrices in \eqref{eq:system-pred-1} gives us Dikin ellipsoid type bounds for $d_x$, $d_y$ and $d_\tau$.

\begin{remark} \label{computation}
In the Domain-Driven setup, $A \in \R^{m \times n}$, with $m \geq n$, is given which may have some computationally useful properties such as a special sparsity pattern. In the theoretical formula of \eqref{eq:system-pred-1}, having $F$ is fine, whereas  in practice, calculating $F$ can be very costly and most likely $F$ does not maintain the structure of $A$. However, it can be shown that $F$ is not needed for solving system \eqref{eq:system-pred-1} and it can be eliminated by using the fact that the kernel of $F$ is the range of $A$.  With this observation, system \eqref{eq:system-pred-1} is reduced to solving two systems with LHS matrices $\tilde A^\top \bar H(x,\tau) \tilde A$ and $\tilde A^\top \hat H(x,\tau,y) \tilde A$, where $\tilde A \in \R^{(m+1) \times (n+1)}$ is a matrix constructed by $A$ and $c$. 
This is the same as the general algorithm in \cite{infea-2} for conic optimization. However, as mentioned in \cite{infea-2}, for symmetric cones, equipped with \emph{self-scaled} barriers, the search direction can be calculated by one system of the same size, with an extra cost of calculating a scaling point in the primal cone (or the dual cone). As we mention in the following, one choice for $\hat H(x,\tau,y)$ to achieve the desired theoretical results is $\hat H(x,\tau,y) := \mu^2 \bar H(x,\tau)$, by which the above two systems become the same. A disadvantage of this choice is that it does not fully exploit the  primal-dual symmetry. 

Assume that in the conic setup, where the underlying LH-s.c.\ barriers are $F$ and $F_*$, the current iteration is at primal point $x \in \inte K$ and dual point $s \in \inte K_*$. A key property of symmetric cones and self-scaled barriers is the existence  of a \emph{scaling point} $w \in \inte K$ which satisfies (among other conditions):
\begin{eqnarray}\label{eq:scaling-1}
\begin{array}{crcl}
(a)& F''(w) x &=&s, \\ 
(b)& F''(w) F_*'(s)& =& F'(x). 
\end{array}
\end{eqnarray}
The existence of a scaling point $w \in \inte K$ satisfying \eqref{eq:scaling-1} is not guaranteed for every pair $(x,s) \in \inte K \oplus \inte K_*$, if $F$ is not self-scaled (i.e., beyond symmetric cones), as shown in \cite{myklebust2014interior}. The second author had proposed replacing $F''(w)$ by a positive definite symmetric bilinear form satisfying \eqref{eq:scaling-1} and another useful condition involving $F''(x)$ and $F_*''(s)$ \cite{tunccel2001generalization}. Since \cite{myklebust2014interior, tunccel2001generalization} use low-rank updates to modify the scaling map, the work per iteration can be made comparable to that of a first-order method. 
We mention two other approaches for non-symmetric cones. Nesterov \cite{towards} designed an algorithm that at every iteration, computes a point $(x,s) \in \inte K \oplus \inte K_*$ and a scaling point $w \in \inte K$ which satisfy \eqref{eq:scaling-1}-(a) exactly and \eqref{eq:scaling-1}-(b) approximately. To find such  points, the corrector operation is done only on the primal part to find a point $u$, and $(x,s)$ is a \emph{primal-dual  lifting} of $u$. The approach does not fully exploit the properties of the dual point. Another disadvantage is that the approach is feasible-start, but still requires a phase-I for finding an initial point close to the primal central path. Skajaa an Ye \cite{skajaa-ye} also addressed this issue for non-symmetric cones by designing an algorithm which only uses the primal barrier. Implied by our discussion in Remark \ref{remark-cone-1}, their predictor direction is a special case of ours when we choose $\hat H(x,\tau,y) := \mu^2 \bar H(x,\tau)$. In the corrector step, they propose a quasi-Newton type approach by using low-rank updates for the LHS matrix and so reducing the cost of factoring this matrix at each corrector iteration. 

Overall, our algorithms can be implemented by solving one linear system of equations of size roughly $n$-by-$n$ at every iteration, and the amount of work to form this system depends on practical considerations. Quasi-Newton type updates of the form in \cite{myklebust2014interior, tunccel2001generalization} are the most promising ones we can adapt which can make our algorithms scalable, while attaining some primal-dual symmetry. 
\end{remark}
\subsection{Predictor step} 
An efficient predictor search direction must increase $\mu$ by a large rate and at the same time let us take a long enough step. We first give the choices of $\hat H(x,\tau,y)$ and $r_{RHS}$ for the system in \eqref{eq:system-pred-1} and then justify them. 
For the RHS vector we choose $r_{RHS}:=r^0/{\mu^2}$, where $r_0$ defined in \eqref{eq:mat-U}. We have different choices for $\hat H(x,\tau,y)$  to attain our desired properties (such as a low complexity bound). We express a sufficient condition and discuss two choices that satisfy the condition.  
 We will see that to achieve enough increase in $\mu$ at every predictor step, it is sufficient that for every $\bar \epsilon \in (0,1)$, there exists a choice of $\epsilon$ in Lemma \ref{lem:hessian-close-1} such that
\begin{eqnarray}  \label{eq:lem:hessian-close-1-2}
 (1-\bar \epsilon)^2   \left [\bar H(x(\mu),\tau(\mu))\right]^{-1}   \preceq  \mu^2 \left [\hat H(x,\tau,y)\right]^{-1}     \preceq \frac{1 }{(1-\bar \epsilon)^2}   \left [\bar H(x(\mu),\tau(\mu)) \right]^{-1},
\end{eqnarray}
for every point $(x,\tau,y) \in Q_{DD}$ with $\Omega_\mu(x,\tau,y) \leq \epsilon$.
%

\begin{remark}\label{remark-cone-1} \emph{In view of \eqref{eq:lem:hessian-close-1-1}, one obvious choice for $\hat H(x,\tau,y)$ is 
$\hat H(x,\tau,y) := \mu^2 \bar H(x,\tau)$. 
Another choice is one that yields the predictor direction for the primal-dual conic setup given in \cite{infea-1,infea-2}. 
More explicitly, assume that we reformulate our problem in the Domain-Driven setup as a conic optimization problem by adding an artificial variable, with the conic hull of $D$ as the underlying cone (see \cite{cone-free} or \cite{interior-book}-Section 5.1). By using $\Phi$ and $\Phi_*$, we can construct $\hat \vartheta $-LH-s.c.\ barriers $\hat \Phi$ and  $\hat \Phi_*$, where calculating $\hat \Phi_*$ requires a one-dimensional maximization \cite{cone-free}.  Then, the predictor step calculated in \cite{infea-1,infea-2} can be achieved by \eqref{eq:mat-U} for a special choice of $\hat H$:
\begin{eqnarray} \label{eq:dual-4}
&&\left [\hat H(x,\tau,y) \right]^{-1}:= \left[ \begin{array}{cc} G+\eta_* h_* h_*^\top  &  -\eta_* h_*  \\  -\eta_* h_*^\top  & \eta_*  \end{array} \right], \nonumber \\
&&G:= \bar \tau^2 \Phi''_*(\bar \tau y), \ \ h_*:=- \Phi'_*(\bar \tau y) - \bar \tau \Phi''_*(\bar \tau y)y, \ \ 1/\eta_*:=\frac{\xi \vartheta}{ \bar \tau^2} - \langle y, \Phi''_*(\bar \tau y)y \rangle,
\end{eqnarray} 
where 
\begin{eqnarray} \label{eq:dual-1}
\bar \tau:= \textup{argmax}_{\tau} \left\{ \Phi_*(\tau y) + y_\tau  \tau + \xi \vartheta \ln \tau\right\}.
\end{eqnarray}               
To check that \eqref{eq:dual-4} satisfies condition \eqref{eq:lem:hessian-close-1-2} for every $\bar \epsilon \in (0,1)$ for a right choice of $\epsilon$, we can use the arguments in \cite{infea-1,infea-2}, or the fact that $ \Phi \left ( \frac{z}{\tau} \right)+\xi \vartheta \ln(\tau)$ is a s.c.\ function, Lemma \ref{lem:hessian-close-1}, and the properties of LF conjugates.  Calculating $\bar \tau$ can be done efficiently, since evaluating the RHS of \eqref{eq:dual-1} is equivalent to minimizing a s.c.\ function.   }\end{remark}

Let us justify our predictor step. If we choose $\alpha_1=\frac{\alpha_2}{\tau+\alpha_2 d_\tau}$ (assuming $\tau+\alpha_2 d_\tau >0$) for the updates in \eqref{eq:update}, then by using the third line of \eqref{eq:dd-4-2} for $\mu$, we have
\begin{eqnarray} \label{eq:pred-3}
\begin{array}{cl}
& \mu(x^+,\tau^+,y^+)-\mu(x,\tau,y)  \\
=&  \frac{-1}{\xi \vartheta} \left[ \alpha_2 \langle d_y,z^0 \rangle + \alpha_2 d_\tau y_{\tau,0} + \langle c+A^\top y^0, \alpha_2 d_\tau x + (\tau+\alpha_2 d_\tau) \alpha_1 d_x \rangle \right] \\  
=&  \frac{- \alpha_2}{\xi \vartheta} \left( \langle d_y,z^0 \rangle+ d_\tau y_{\tau,0} +\langle c+A^\top y^0, d_\tau x+ d_x \rangle \right),  \hfill \text{substituting $\alpha_1=\frac{\alpha_2}{\tau+\alpha_2 d_\tau}$,}  \\
=& \frac{\alpha_2}{\xi \vartheta} \left( \langle d_v,Fz^0 \rangle+ d_\tau (\langle c_A, z^0 \rangle-y_{\tau,0}) -\langle c+A^\top y^0, \bar d_x \rangle \right),  \ \ \hfill   \text{substituting $d_x$ and $d_y$ from \eqref{eq:system-pred-1},}  \\
=& \frac{\alpha_2}{\xi \vartheta} [\bar d_x^\top \ \ \ d_\tau \ \ \ d_v^\top]  r^0, \hfill \text{for $r^0$ defined in \eqref{eq:mat-U}}. 
\end{array} 
\end{eqnarray}
Let $d^\top:= [\bar d_x^\top \ \ d_\tau \ \ d_v^\top]$, then we see that the Dikin ellipsoid type constraint  $d^\top U^\top \cH(\bar H, \hat H) U d  \leq 1$ guarantees the feasibility of new iterates with respect to the domains of the underlying s.c.\ functions. 
The search direction in \eqref{eq:system-pred-1} is, up to some scaling, the solution of the following optimization problem
\begin{eqnarray} \label{eq:pred-1}
\begin{array}  {cc}
\max &  \langle d,r^0 \rangle \\
s.t.  &  d^\top U^\top \cH(\bar H, \hat H) U d  \leq 1,
\end{array}
\end{eqnarray}
which can be seen as maximizing the linear function of \eqref{eq:pred-3} in a trust region. 
\subsection{Corrector step} 
After doing a predictor step to increase $\mu$, we need to perform corrector steps to come back into the small neighborhood. Note that our proximity measure $\Omega_\mu(x,\tau,y)$ is not a convex function and to decrease it we use a quasi-Newton like step. In most of the literature on this topic, for example papers \cite{infea-1,infea-2,cone-free}, the corrector step is simply minimizing a s.c.\ function that can be done efficiently by taking damped Newton steps \cite{interior-book}. Even though our proximity measure is not a s.c.\ function and we cannot directly use damped Newton steps, $\Phi$ and $\Phi_*$ are 1-s.c.\ functions and we can exploit their properties. 
We first define the corrector step and then explain our choice. The corrector search direction is the solution of \eqref{eq:system-pred-1} with 
\begin{eqnarray} \label{eq:system-corrector-step-1}
\hat H:= \mu^2 \bar H, \ \ \ r_{RHS}:= -(U^\top \psi ^c+ \beta r^0), \ \ \ \beta:= -\frac{\langle r^0, [U^\top  \cH(\bar H, \mu^2 \bar H) U ]^{-1} U^\top \psi ^c \rangle}{ \langle r^0, [U^\top  \cH(\bar H, \mu^2 \bar H) U ]^{-1} r^0 \rangle},
\end{eqnarray}
where  $r_0$ is defined in \eqref{eq:mat-U} and
\begin{eqnarray} \label{eq:corrector-10-2}
\psi ^c:=\left[ \begin{array}{c}\frac{1}{\tau} \Phi'  \\ -\frac{1}{\tau} \langle\Phi' ,Ax+\frac{1}{\tau}z^0 \rangle +\frac{1}{\mu} \langle y, \Phi_*'  \rangle + \frac{1}{\mu} (y_{\tau,0}+\tau \langle c,x \rangle)\\  \frac{ \tau}{\mu} \Phi_*'   \\ \frac{\tau}{\mu} \end{array}\right].
\end{eqnarray}
\begin{remark}    \label{rem-2}
\emph{If we choose $\alpha_1=\frac{\alpha_2}{\tau+\alpha_2 d_\tau}$ (assuming $\tau+\alpha_2 d_\tau >0$) for the updates in \eqref{eq:update}, then \eqref{eq:pred-3} holds. The parameters in \eqref{eq:system-corrector-step-1} are chosen so that the solution of \eqref{eq:system-pred-1} satisfies $[\bar d_x^\top \ \ d_\tau \ \ d_v^\top]  r^0=0$ and thus, we automatically have $\mu(x^+,\tau^+,y^+)=\mu(x,\tau,y)$ in the corrector step.}
\end{remark}
The following lemma justifies our corrector search direction. 
\begin{lemma}  \label{lem:corr-3}
Consider a choice of $r_{RHS}$ and $\hat H(x,\tau,y)$ such that for the solution of \eqref{eq:system-pred-1} and the updates in \eqref{eq:update} with $\alpha_1=\frac{\alpha_2}{\tau+\alpha_2 d_\tau}$ we have $\mu(x^+,\tau^+,y^+)=\mu(x,\tau,y)$. Then,
\begin{eqnarray}\label{eq:corrector-16-2}
\begin{array}{ccl}
 \rho(D(\alpha_2))  & \leq &\Omega_\mu(x^+ ,\tau^+,y^+)-\Omega_\mu(x,\tau,y) -\alpha_2   \left [ \bar d_x^\top \ \ d_\tau \  \  d^\top_v \right]  U^\top  \psi^c  \\
&&+ \frac{\alpha_2^2 d_\tau}{\tau(\tau+ \alpha_2 d_\tau)}  \langle \Phi', A \bar d_x -d_\tau \left(Ax+\frac 1\tau z^0 \right) \rangle - \frac{\alpha_2^2 d_\tau}{\mu}  \left ( \langle d_y,\Phi_*' \rangle + \langle c, \bar d_x \rangle \right)   \\
&\leq &\rho \left(-D(\alpha_2) \right),  \\
D(\alpha_2)&:=& \frac{\alpha_2}{\tau+\alpha_2d_\tau}\left \| A \bar d_x -d_\tau \left(Ax+\frac 1\tau z^0 \right) \right\|_{\Phi''}+\alpha_2\left \| \frac{d_\tau y + (\tau +\alpha_2 d_\tau) d_y}{\mu} \right\|_{\Phi_*''},
\end{array} 
\end{eqnarray}
where $\psi^c$ is defined in \eqref{eq:corrector-10-2} and $\rho$ is defined in \eqref{eq:rho}. 
\end{lemma}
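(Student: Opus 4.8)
The plan is to expand the change in the proximity measure $\Omega_\mu(x^+,\tau^+,y^+)-\Omega_\mu(x,\tau,y)$ along the update \eqref{eq:update} with the special step-size coupling $\alpha_1=\frac{\alpha_2}{\tau+\alpha_2 d_\tau}$, and to isolate from it a ``curvature term'' that can be squeezed between $\rho(D(\alpha_2))$ and $\rho(-D(\alpha_2))$ using the one-dimensional restriction of the $1$-self-concordant barriers $\Phi$ and $\Phi_*$. Recall $\Omega_\mu = \Phi(u) + \Phi_*\!\left(\frac{\tau}{\mu}y\right) - \frac{\tau}{\mu}\langle y,u\rangle$ with $u = Ax+\frac1\tau z^0$. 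First I would compute how each of the three pieces transforms. For the first piece, with the coupling of step sizes one checks that $u^+ = A x^+ + \frac{1}{\tau^+}z^0 = u + \frac{\alpha_2}{\tau+\alpha_2 d_\tau}\bigl(A\bar d_x - d_\tau u\bigr)$; call the increment $p := A\bar d_x - d_\tau u$ so that $u^+ = u + \frac{\alpha_2}{\tau+\alpha_2 d_\tau} p$. For the second piece, the argument moves from $\frac{\tau}{\mu}y$ to $\frac{\tau^+}{\mu}y^+ = \frac{\tau}{\mu}y + \frac{\alpha_2}{\mu}\bigl(d_\tau y + (\tau+\alpha_2 d_\tau)d_y\bigr)$ (here I use $\mu^+ = \mu$, which holds by hypothesis); call the increment $q_* := d_\tau y + (\tau+\alpha_2 d_\tau)d_y$, so the argument shifts by $\frac{\alpha_2}{\mu}q_*$. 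Note $D(\alpha_2) = \frac{\alpha_2}{\tau+\alpha_2 d_\tau}\|p\|_{\Phi''} + \alpha_2\|\frac{q_*}{\mu}\|_{\Phi_*''}$, i.e.\ exactly the sum of the two local norms of these two increments.

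Next I would apply the standard self-concordance second-order expansion with integral remainder: for a $1$-s.c.\ function $g$ and a displacement $w$ with $r := \|w\|_{g''(v)}<1$, one has
\beq
g(v+w) - g(v) - \langle g'(v), w\rangle - \tfrac12 g''(v)[w,w] \le \rho(r) \quad\text{and}\quad \ge \rho(-r),
\eeq
up to the customary normalization; more precisely the second-order Taylor error is sandwiched between $\rho(r)$ and $\rho(-r)$ (this is exactly the estimate underlying \eqref{eq:rho} and can be extracted from \cite{interior-book}). Applying this to $\Phi$ at $v=u$, $w = \frac{\alpha_2}{\tau+\alpha_2 d_\tau}p$ gives a linear term $\frac{\alpha_2}{\tau+\alpha_2 d_\tau}\langle\Phi'(u),p\rangle$, a quadratic term $\frac12\bigl(\frac{\alpha_2}{\tau+\alpha_2 d_\tau}\bigr)^2\|p\|_{\Phi''}^2$, and a remainder bounded by $\rho\!\left(\pm\frac{\alpha_2}{\tau+\alpha_2 d_\tau}\|p\|_{\Phi''}\right)$. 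Similarly for $\Phi_*$ at $\frac{\tau}{\mu}y$ with displacement $\frac{\alpha_2}{\mu}q_*$. The third, bilinear piece $-\frac{\tau}{\mu}\langle y,u\rangle$ transforms to $-\frac{\tau^+}{\mu}\langle y^+,u^+\rangle$ and expands exactly: its change is a linear part plus a genuinely quadratic cross term. The crucial algebraic point is that when the two quadratic remainders $\frac12 r_1^2 + \frac12 r_2^2$ (with $r_1,r_2$ the two local norms whose sum is $D(\alpha_2)$) are combined with the quadratic cross term coming from the bilinear piece, one gets a perfect-square-type expression; using Fenchel--Young/Cauchy--Schwarz in the form $r_1^2 + r_2^2 + 2 r_1 r_2 \ge$ (cross term) $\ge r_1^2+r_2^2 - 2r_1 r_2$, the whole quadratic contribution collapses and what remains, modulo the explicitly-written linear terms, is bounded between $\rho(D(\alpha_2))$ and $\rho(-D(\alpha_2))$ — because $\rho(a)+\rho(b) \pm 2\cdot(\text{something})$ assembled this way is dominated by $\rho(a+b)$ and $\rho(-(a+b))$, a known convexity-type property of $\rho$.

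Then I would collect the linear terms. The hypothesis $\mu(x^+,\tau^+,y^+)=\mu(x,\tau,y)$, i.e.\ $[\bar d_x^\top\ d_\tau\ d_v^\top]r^0 = 0$ by \eqref{eq:pred-3}, is used precisely to eliminate the $r^0$-component of the linear terms, so that what survives among the linear contributions is exactly $\alpha_2 [\bar d_x^\top\ d_\tau\ d_v^\top]U^\top\psi^c$ together with the two leftover ``correction'' terms $\frac{\alpha_2^2 d_\tau}{\tau(\tau+\alpha_2 d_\tau)}\langle\Phi', A\bar d_x - d_\tau u\rangle$ and $-\frac{\alpha_2^2 d_\tau}{\mu}(\langle d_y,\Phi_*'\rangle + \langle c,\bar d_x\rangle)$ that appear on the middle line of \eqref{eq:corrector-16-2}. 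These two extra terms arise because the displacement in the $\Phi$-argument is $\frac{\alpha_2}{\tau+\alpha_2 d_\tau}p$ rather than simply $\alpha_2 p$ (the $\tau$ in the denominator of $u$ is itself moving), so the ``naive'' linear term $\frac{\alpha_2}{\tau}\langle\Phi',p\rangle$ must be corrected by the difference $\frac{\alpha_2}{\tau+\alpha_2 d_\tau}-\frac{\alpha_2}{\tau} = -\frac{\alpha_2^2 d_\tau}{\tau(\tau+\alpha_2 d_\tau)}$ times $\langle\Phi',p\rangle$, and likewise on the $\Phi_*$/bilinear side. Matching $\psi^c$ in \eqref{eq:corrector-10-2} against $U^\top$ applied to $[\bar d_x^\top\ d_\tau\ d_v^\top]$ and using the definition of $U$ in \eqref{eq:mat-U} (which brings in $A$, the unit entry, $-c_A$, $-F^\top$, and $c^\top$) is a careful but mechanical bookkeeping step.

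The main obstacle I expect is the third item: organizing the quadratic terms so that the perfect-square structure is visible and then invoking the right monotonicity property of $\rho$. Concretely, one needs that if the remainder from $\Phi$ lies in $[\rho(-r_1),\rho(r_1)]$ and that from $\Phi_*$ in $[\rho(-r_2),\rho(r_2)]$, and the bilinear cross term is $\pm$(something bounded by $2 r_1 r_2$), then the total lies in $[\rho(-(r_1+r_2)),\rho(r_1+r_2)] = [\rho(-D(\alpha_2)),\rho(D(\alpha_2))]$; this rests on the elementary but slightly delicate inequality $\rho(a) + \rho(b) + ab \le \rho(a+b)$ for $a,b$ in the relevant range (and the mirror statement with signs flipped), which follows from $\rho(t) = t - \ln(1+t)$ by a direct computation or a convexity argument. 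Everything else — the self-concordant expansions, the substitution $\alpha_1 = \frac{\alpha_2}{\tau+\alpha_2 d_\tau}$, the use of $\mu^+=\mu$, and the identification of $U^\top\psi^c$ — is routine once the bookkeeping is set up carefully.
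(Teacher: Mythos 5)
Your displacement computations agree with the paper: $u^+=u+\frac{\alpha_2}{\tau+\alpha_2 d_\tau}\bigl(A\bar d_x-d_\tau u\bigr)$, the shift of the $\Phi_*$-argument by $\frac{\alpha_2}{\mu}\bigl(d_\tau y+(\tau+\alpha_2 d_\tau)d_y\bigr)$, and the identification of $D(\alpha_2)$ as the sum of the two local norms are all exactly what the paper uses. The decisive divergence is in the bilinear term. The paper does \emph{not} expand $-\langle\frac{\tau^+y^+}{\mu},u^+\rangle$ as $-\langle w,u\rangle-\langle\Delta w,u\rangle-\langle w,\Delta u\rangle-\langle\Delta w,\Delta u\rangle$; instead it uses the identity $\langle\frac{\tau y}{\mu},Ax+\frac1\tau z^0\rangle=\frac{\tau}{\mu}\bigl(-y_{\tau,0}-\tau\langle c,x\rangle\bigr)-\xi\vartheta$ (the first line of \eqref{eq:dd-4-2}, valid on $Q_{DD}$, applied again at the new point since $\mu^+=\mu$) together with $\tau^+x^+=\tau x+\alpha_2\bar d_x$ to compute the change of the bilinear term \emph{exactly}: it equals $\frac{\alpha_2 d_\tau}{\mu}(y_{\tau,0}+\tau\langle c,x\rangle)+\frac{\alpha_2\tau}{\mu}\langle c,\bar d_x\rangle+\frac{\alpha_2^2 d_\tau}{\mu}\langle c,\bar d_x\rangle$, whose only nonlinear piece is the last term, which is one of the two explicit corrections on the middle line of \eqref{eq:corrector-16-2}. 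After matching the remaining linear terms against $\alpha_2\,[\bar d_x^\top\ d_\tau\ d_v^\top]U^\top\psi^c$ (which requires $F^\top d_v=-d_y-d_\tau c_A$ and is where the $y_{\tau,0}$ and $\langle c,x\rangle$ entries of $\psi^c$ come from), the middle expression of \eqref{eq:corrector-16-2} is \emph{exactly} the sum of the two first-order self-concordance remainders of $\Phi$ and $\Phi_*$, with no cross term at all; the sandwich then comes from \eqref{property-4} and superadditivity of $\rho(\pm\cdot)$.

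Your route retains a genuine cross term $-\langle\Delta w,\Delta u\rangle$ and proposes to absorb it via Cauchy--Schwarz into a perfect square, invoking $\rho(a)+\rho(b)+ab\le\rho(a+b)$. Two steps fail. First, Cauchy--Schwarz pairs $\|\Delta u\|_{\Phi''(u)}$ with $\|\Delta w\|_{[\Phi''(u)]^{-1}}$, which equals the norm $\|\Delta w\|_{\Phi_*''(\tau y/\mu)}$ appearing in $D(\alpha_2)$ only when $\frac{\tau y}{\mu}=\Phi'(u)$, i.e.\ on the central path; off the path a Hessian-comparison factor would corrupt the exact constants in $D(\alpha_2)$. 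Second, and fatally for the lower bound, $\rho(a)+\rho(b)+ab\le\rho(a+b)$ is false for $a,b>0$: it is equivalent to $ab\le\ln\bigl(1+\frac{ab}{1+a+b}\bigr)<\frac{ab}{1+a+b}$. Only the mirror statement $\rho(-a)+\rho(-b)+ab\le\rho(-(a+b))$ holds, so your reassembly can deliver the upper bound $\rho(-D)$ but not the lower bound $\rho(D)$. (Your intervals are also written with the endpoints reversed --- $\rho(-r)>\rho(r)$ for $r>0$ --- and \eqref{property-4} sandwiches the \emph{first}-order Taylor error by $[\rho(r),\rho(-r)]$, not the second-order error, which lies in $[\rho(r)-\frac{r^2}{2},\rho(-r)-\frac{r^2}{2}]$.) The missing idea is the exact algebraic elimination of the bilinear term through the definition of $\mu(x,\tau,y)$; without it the cross term survives and the claimed sandwich cannot be closed.
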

\begin{proof}
To derive \eqref{eq:corrector-16-2}, we substitute for $\Omega_\mu$ from \eqref{DD-proximity-1-1-2} and then use \cite{nemirovski-notes}-(2.4) (the bounds \eqref{property-4}) for the 1-s.c. function $f(u,w)=\Phi(u)+\Phi_*(w)$. We just need to explicitly calculate the displacements in the arguments of $\Phi$ and $\Phi_*$. 
By the hypothesis, $\mu^+:=\mu(x^+,\tau^+,y^+)=\mu$.
First we have
\begin{eqnarray} \label{eq:dd-expand-4}
\begin{array}{rcl}
Ax^++\frac{1}{\tau^+} z^0 - Ax-\frac{1}{\tau}z^0 &=& \alpha_1 Ad_x- \frac{\alpha_2 d_\tau}{\tau(\tau+\alpha_2 d_\tau)}  z^0  \\
   &=& \frac{\alpha_2}{\tau+\alpha_2 d_\tau}  \left[ A  d_x-\frac{d_\tau}{\tau}z^0 \right],  \hfill  \text{using $\alpha_1=\frac{\alpha_2}{\tau+\alpha_2 d_\tau}$,} \\
   &=& \frac{\alpha_2}{\tau+\alpha_2 d_\tau}  \left[ A \bar d_x-d_\tau \left (Ax+\frac{1}{\tau}z^0\right) \right],  \ \ \hfill \text{using $d_x=\bar d_x-d_\tau x$}.
   \end{array}
\end{eqnarray}
For displacement in the argument of $\Phi_*$, we have
\begin{eqnarray*}
\frac{\tau^+y^+}{\mu^+}-\frac{\tau y}{\mu}= \frac{\alpha_2 d_\tau y + \alpha_2 \tau d_y + \alpha_2^2 d_\tau d_y}{\mu}.
\end{eqnarray*}
As an intermediate step, similar to \eqref{eq:dd-expand-4}, by substituting $\alpha_1=\frac{\alpha_2}{\tau+\alpha_2 d_\tau}$ and $d_x=\bar d_x-d_\tau x$, we have
\begin{eqnarray}  \label{eq:interm-1}
\tau^+ x^+=(\tau+\alpha_2 d_\tau)(x+\alpha_1 d_x)=\tau x + \alpha_2 \bar d_x.
\end{eqnarray}
Then, by using $\mu^+=\mu$ and the first line of \eqref{eq:dd-4-2}, and then substituting \eqref{eq:interm-1}, we have
\begin{eqnarray*}
\begin{array}{rcl}
- \langle \frac{\tau^+y^+}{\mu^+}, Ax^++\frac{1}{\tau^+}z^0 \rangle+ \langle \frac{\tau y}{\mu}, Ax + \frac{1}{\tau} z^0 \rangle 
  &=& - \frac{\tau^+}{\mu }[-y_{\tau,0} - \tau^+ \langle c, x^+ \rangle] +  \frac{\tau}{\mu }[-y_{\tau,0} - \tau \langle c, x \rangle] \\
  &=& \frac{\alpha_2}{\mu} [ d_\tau y_{\tau,0} +\tau \langle c, \bar d_x\rangle+ d_\tau \langle c, \tau x+\alpha_2 \bar d_x \rangle ].
  \end{array}
\end{eqnarray*}
 We can verify by direct substitution that 
\begin{eqnarray} \label{eq:corrector-10-2-T1}
U ^\top \psi ^c=\left[ \begin{array}{c}\frac{1}{\tau} A^\top \Phi' +\frac{1}{\mu} \tau c  \\ -\frac{1}{\tau} \langle\Phi' ,Ax+\frac{1}{\tau}z^0 \rangle +\frac{1}{\mu} \langle y-\tau c_A,  \Phi_*'  \rangle + \frac{1}{\mu} (y_{\tau,0}+\tau \langle c,x \rangle)\\  -\frac{ \tau}{\mu} F \Phi_*'  \end{array}\right].
\end{eqnarray}
If we also use the equality $F^\top d_v=-d_y-d_\tau c_A$, then we have
\begin{eqnarray} \label{eq:corrector-10-2-T2}
\left [ \bar d_x^\top \ \ d_\tau \  \  d^\top_v \right]  U ^\top \psi ^c &=&\frac{1}{\tau}  \langle \Phi', A \bar d_x -d_\tau \left(Ax+\frac 1\tau z^0 \right) \rangle + \frac{\tau}{\mu} \langle c,\bar d_x \rangle   \nonumber \\
&& +\frac{d_\tau}{\mu} \langle y, \Phi_*' \rangle + \frac{\tau}{\mu} \langle d_y,  \Phi_*' \rangle + \frac{d_\tau}{\mu} (y_{\tau,0}+\tau \langle c,x \rangle). 
\end{eqnarray}
By substituting all the above equations we get \eqref{eq:corrector-16-2}. 
\end{proof}
In view of \eqref{eq:pred-3}, $\mu^+=\mu$ is equivalent to $\langle d, r^0 \rangle =0$ for $d^\top :=  \left [ \bar d_x^\top \ \ d_\tau \  \  d^\top_v \right] $. The corrector search direction in \eqref{eq:system-corrector-step-1} is, up to some scaling, the optimal solution of
\begin{eqnarray} \label{eq:lem:corr-ana-1}
\begin{array}  {cc}
\min &  \langle d, U^\top \psi ^c \rangle \\[.3em]
s.t.   &  \langle d, r^0 \rangle =0 \\[.3em]
  &  d^\top U^\top \cH(\bar H, \mu^2 \bar H) U d  \leq 1. 
\end{array}
\end{eqnarray}
Before a concrete analysis, to intuitively justify this search direction using \eqref{eq:corrector-16-2}, note that our goal is to minimize $\Omega_\mu(x^+ ,\tau^+,y^+)-\Omega_\mu(x,\tau,y)$. The coefficient of $\langle d, U^\top \psi ^c \rangle$ is $\alpha_2$, whereas all the other terms are (almost) proportional to $\alpha^2_2$. Therefore, we can look at $\alpha_2 \langle d, U^\top \psi ^c \rangle$ as the first order approximation of $\Omega_\mu(x^+ ,\tau^+,y^+)-\Omega_\mu(x,\tau,y)$ that we minimize in \eqref{eq:lem:corr-ana-1} in a trust region. 
\begin{remark}
\emph{What we prove for the corrector step above is enough for the purposes of obtaining the desired complexity results. However, corrector steps in most of the other papers in this context (such as \cite{infea-1,infea-2,cone-free}) are simply minimizing a s.c. function and have the stronger property of quadratic convergence for the points close enough to the central path \cite{interior-book}. Proving asymptotic quadratic convergence for a suitable variant of our algorithm is a future goal.   }
\end{remark}
\section{Analysis of the algorithms} \label{sec:analysis}
In this section, we analyze the predictor and corrector steps we defined in the previous section. This analysis lets us modify the framework for primal-dual algorithms  in Section \ref{sec:alg} to achieve the current best iteration complexity bounds. This modification and the main theorem about it come in Section \ref{ch:com-result}. 
The following lemma shows how to bound the proximity measure \eqref{DD-proximity-1-1-2} based on the local norm defined by the current primal and dual iterates:
\begin{lemma} \label{lem:prox-LN}
(a) Assume that $f(x)$ is an $a$-s.c.\ function and let $f_*(y)$ be its LF conjugate. Then, for every $x$ and $y$ in the domains of $f$ and $f_*$ we have
\begin{eqnarray}  \label{eq:lem:prox-LN}
\ \ \ \ a\rho \left ( r \right)  \leq f(x)+f_*(y) - \langle y , x \rangle  \leq a\rho \left (-  r \right),
\end{eqnarray}
where $r:=a^{-1/2}\|y-f'(x)\|_{[f''(x)]^{-1}}$ and $\rho$ is defined in \eqref{eq:rho}.\\
(b) Moreover, assume that there exist $\hat x$ and $\hat y$ in the domains of $f$ and $f_*$ respectively such that $\hat y = f'(\hat x)$ and $\langle x-\hat x ,y  -\hat y \rangle = 0$. Then, 
\begin{eqnarray}  \label{eq:lem:prox-LN-2}
a\rho(r)+a\rho(s)  \leq  f(x)+f_*(y) - \langle y , x \rangle  \leq  a\rho(-r)+a\rho(-s),
\end{eqnarray}
where $r:=a^{-1/2}\|x-\hat x\|_{f''( \hat x)}$ and $s:=a^{-1/2}\|y-\hat y\|_{f_*''(\hat y)}$. 
\end{lemma}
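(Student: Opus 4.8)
The plan is to reduce the general $a$-self-concordant case to the $1$-self-concordant case by rescaling, and then invoke the known one-dimensional/local estimates for the Fenchel gap of a $1$-s.c.\ function. First I would observe that if $f$ is $a$-s.c., then $g := a^{-1} f$ is $1$-s.c., and its LF conjugate is $g_*(y) = a^{-1} f_*(a y)$. The quantity $f(x) + f_*(y) - \langle y, x\rangle$ equals $a\bigl(g(x) + g_*(\tilde y) - \langle \tilde y, x\rangle\bigr)$ with $\tilde y := y/a$, so it suffices to prove the bounds with $a = 1$ and then substitute back; under this substitution one checks that $g''(x) = a^{-1} f''(x)$ and $g'(x) = a^{-1} f'(x)$, so that $\|\tilde y - g'(x)\|_{[g''(x)]^{-1}} = a^{-1/2}\|y - f'(x)\|_{[f''(x)]^{-1}} = r$, which is exactly the definition of $r$ in the statement. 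Thus part (a) follows from the $1$-s.c.\ inequality $\rho(r) \le g(x) + g_*(\tilde y) - \langle \tilde y, x\rangle \le \rho(-r)$.

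To prove the $1$-s.c.\ inequality itself, I would use the restriction-to-a-line argument that is standard for self-concordant functions (and is the content of \cite{nemirovski-notes}-(2.4), the bounds labeled \eqref{property-4} in the excerpt). Write $\hat y := f'(x)$ (now in the $a=1$ normalization, so I drop the tilde) and consider the Fenchel gap as a function of $y$ along the segment from $\hat y$ to $y$; its gradient in $y$ is $f_*'(y) - x$, which vanishes at $y = \hat y$ since $f_*'(\hat y) = x$ by conjugate duality, and its Hessian is $f_*''(y)$. Using the s.c.\ bounds on how $f_*''$ varies along the segment — equivalently, integrating the one-dimensional estimate twice — yields $\rho(\|y - \hat y\|_{f_*''(\hat y)}) \le f(x) + f_*(y) - \langle y, x\rangle \le \rho(-\|y - \hat y\|_{f_*''(\hat y)})$, and the identity $\|y - f'(x)\|_{[f''(x)]^{-1}} = \|y - \hat y\|_{f_*''(\hat y)}$ (because $f_*''(f'(x)) = [f''(x)]^{-1}$) converts this into the stated form of $r$.

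For part (b), the extra hypothesis $\hat y = f'(\hat x)$ together with the orthogonality $\langle x - \hat x, y - \hat y\rangle = 0$ lets me split the Fenchel gap additively. The key identity is
\begin{eqnarray*}
f(x) + f_*(y) - \langle y, x\rangle = \bigl[f(x) + f_*(\hat y) - \langle \hat y, x\rangle\bigr] + \bigl[f(\hat x) + f_*(y) - \langle y, \hat x\rangle\bigr] - \bigl[f(\hat x) + f_*(\hat y) - \langle \hat y, \hat x\rangle\bigr] + \langle x - \hat x, y - \hat y\rangle,
\end{eqnarray*}
where the third bracket vanishes because $\hat y = f'(\hat x)$ forces equality in Fenchel--Young, and the last term vanishes by the orthogonality hypothesis. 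The first bracket is the Fenchel gap between $x$ and $f'(\hat x)$, which by part (a) (applied with base point $\hat x$, using $f_*''(\hat y) = [f''(\hat x)]^{-1}$) is sandwiched between $a\rho(\pm r)$ with $r = a^{-1/2}\|x - \hat x\|_{f''(\hat x)}$; the second bracket is the Fenchel gap between $\hat x$ and $y$, sandwiched between $a\rho(\pm s)$ with $s = a^{-1/2}\|y - \hat y\|_{f_*''(\hat y)}$. Adding the two lower bounds and the two upper bounds gives \eqref{eq:lem:prox-LN-2}.

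The main obstacle I anticipate is purely bookkeeping rather than conceptual: making sure the rescaling $f \mapsto a^{-1} f$ interacts correctly with the LF conjugate and the dual norm (it is easy to misplace a factor of $a$ between $\|y - f'(x)\|_{[f''(x)]^{-1}}$ and the normalized quantity $r$), and verifying that the decomposition identity in part (b) uses the hypotheses exactly where stated. The analytic heart — the one-dimensional $\rho$ estimate for a $1$-s.c.\ function — is quotable from \cite{nemirovski-notes} and \cite{interior-book}, so no new self-concordance machinery needs to be developed here.
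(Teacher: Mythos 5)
Your proposal is correct and follows essentially the paper's own route: part (a) is the two-point bound \eqref{property-4} applied to $f_*$ at $y$ and $f'(x)$ combined with the conjugacy identities \eqref{eq:LF-2} and the Fenchel--Young equality (your rescaling to the $1$-s.c.\ case is harmless but unnecessary, since \eqref{property-4} is already stated for $a$-s.c.\ functions), and your part (b) — splitting the gap into the gaps at $(x,\hat y)$ and $(\hat x,y)$ and invoking part (a) twice — is exactly the paper's ``write \eqref{property-4} for $f$ at $x,\hat x$ and for $f_*$ at $y,\hat y$ and add,'' just with the bookkeeping made explicit. One immaterial slip: the correction term in your part-(b) identity should be $-\langle x-\hat x,\, y-\hat y\rangle$ rather than $+\langle x-\hat x,\, y-\hat y\rangle$, which does not affect anything since that term vanishes under the orthogonality hypothesis.
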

\begin{proof}
(a) By writing the second inequality in \eqref{property-4} for $f_*$ at two points $y$ and $f'(x)$, we have
\[
f_*(y) \leq f_*(f'(x)) + \langle f'_*(f'(x)), y-f'(x) \rangle +  a\rho(-a^{-1/2}\|y-f'(x)\|_{f_*''(f'(x))}).
\]
To get the RHS inequality in \eqref{eq:lem:prox-LN}, we substitute $f'_*(f'(x))=x$ and $f_*''(f'(x))=[f''(x)]^{-1}$ from \eqref{eq:LF-2}, and $f_*(f'(x))+f(x)=\langle f'(x),x\rangle$ from Theorem \ref{thm:FY}. 
The LHS inequality can be similarly proved by using the first inequality in \eqref{property-4}.    \\
(b) We write the property \eqref{property-4} for $f$ at $x$ and $\hat x$ and  for $f_*$ at $y$ and $\hat y$, and add them together. 
\end{proof}
\begin{corollary} \label{coro:bound-prox-1}
	For every $(x,\tau,y) \in Q_{DD}$, we have
\begin{eqnarray}  \label{eq:coro:bound-prox-1}
\ \ \ \rho \left(\left\|\frac{\tau y}{\mu}-  \Phi' \left (  u \right)\right\|_{[\Phi''(u)]^{-1}} \right) \leq \Omega_\mu(x,\tau,y) \leq \rho \left(-\left\|\frac{\tau y}{\mu}-  \Phi' \left (  u \right)\right\|_{[\Phi''(u)]^{-1}} \right), 
\end{eqnarray}
where $\mu:=\mu(x,\tau,y)$ and  $u:=Ax+\frac{1}{\tau} z^0$.
\end{corollary}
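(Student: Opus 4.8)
The plan is to apply Lemma \ref{lem:prox-LN}(a) directly with $a = 1$. The proximity measure $\Omega_\mu(x,\tau,y)$ in \eqref{DD-proximity-1-1-2} is written as $\Phi(u) + \Phi_*\!\left(\frac{\tau y}{\mu}\right) - \frac{\tau}{\mu}\langle y, u\rangle$, where $u := Ax + \frac{1}{\tau} z^0$. Observe that $\frac{\tau}{\mu}\langle y, u\rangle = \langle \frac{\tau y}{\mu}, u\rangle$, so setting $f := \Phi$, $x := u$, and $y := \frac{\tau y}{\mu}$ in the notation of Lemma \ref{lem:prox-LN}(a), the expression $\Omega_\mu(x,\tau,y)$ is exactly $f(x) + f_*(y) - \langle y, x\rangle$. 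Since $\Phi$ is a $\vartheta$-s.c.\ barrier, it is in particular a $1$-s.c.\ function (s.c.\ barriers are $1$-self-concordant), so we take $a = 1$ in Lemma \ref{lem:prox-LN}(a).

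With $a = 1$, the quantity $r$ in Lemma \ref{lem:prox-LN}(a) becomes $r = \left\| \frac{\tau y}{\mu} - \Phi'(u)\right\|_{[\Phi''(u)]^{-1}}$, which is precisely the argument appearing in the statement of Corollary \ref{coro:bound-prox-1}. Plugging into \eqref{eq:lem:prox-LN} then yields
\[
\rho(r) \leq \Omega_\mu(x,\tau,y) \leq \rho(-r),
\]
which is \eqref{eq:coro:bound-prox-1}. One should also note that the hypotheses of Lemma \ref{lem:prox-LN}(a) are satisfied: for $(x,\tau,y)\in Q_{DD}$ we have $Ax + \frac{1}{\tau}z^0 \in \inte D = \dom \Phi$ and $y \in \inte D_*$, and since $\mu = \mu(x,\tau,y) > 0$ by its defining formula together with $\xi > 1$ and the structure of $Q_{DD}$ (this positivity being implicit in the setup, cf.\ the discussion after \eqref{eq:dd-4-2}), we get $\frac{\tau y}{\mu} \in \inte D_* = \dom \Phi_*$.

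There is essentially no obstacle here; the only point requiring a moment's care is the bookkeeping identification $\frac{\tau}{\mu}\langle y, u\rangle = \langle \frac{\tau y}{\mu}, u\rangle$ together with recognizing $\Phi$ as a $1$-s.c.\ function so that the factor $a$ drops out, and verifying that the arguments lie in the appropriate open domains so that Lemma \ref{lem:prox-LN}(a) applies. The corollary is thus an immediate specialization of the preceding lemma.
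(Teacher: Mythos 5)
Your proposal is correct and matches the paper's intended argument: the corollary is stated as an immediate specialization of Lemma \ref{lem:prox-LN}(a) with $a=1$, $f=\Phi$, and arguments $u=Ax+\frac{1}{\tau}z^0$ and $\frac{\tau y}{\mu}$, which is exactly what you do. The domain bookkeeping you add (membership in $\inte D$, $\inte D_*$ and positivity of $\mu$) is consistent with the paper's implicit assumptions, so nothing is missing.
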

As we explained before, matrix $\bar H$ in \eqref{hessian-phi-2}  defines Dikin ellipsoid type properties that are crucial in our analysis. In both the predictor and corrector steps, we have inequality \eqref{eq:lem:ana-1-1} for a vector $d^\top:= [\bar d_x^\top \  \ d_\tau \ \ d_v^\top]$ as the solution of \eqref{eq:system-pred-1} for a proper scalar $q$. We can break down \eqref{eq:lem:ana-1-1} into several useful bounds for our analysis.
 First, clearly
\begin{eqnarray} \label{eq:dd-expand-2}
(\xi-1) \frac{d_\tau^2}{\tau^2}\vartheta  \leq q \ \ \Rightarrow \ \ \left (\frac{d_\tau}{\tau}  \right)^2 \leq  \frac{q}{(\xi-1) \vartheta}.
\end{eqnarray}
Using \eqref{eq:lem:ana-1-1} and \eqref{eq:dd-expand-2}, we get
\begin{eqnarray} \label{eq:dd-expand-3}
\frac{1}{\tau} \left \|A \bar d_x-d_\tau \left (Ax+\frac{1}{\tau}z^0 \right) \right \|_{\Phi''} \leq   \sqrt{q} + \left | \frac{d_\tau}{\tau}\right| \sqrt{\vartheta}
\underbrace{\leq}_{\text{\eqref{eq:dd-expand-2}}}  \left(1 +  \sqrt{\frac{1}{\xi-1}}\right) \sqrt{q}.
\end{eqnarray}
\eqref{eq:dd-expand-3} gives a bound on the displacement in $Ax+\frac{1}{\tau}z^0$ as shown in \eqref{eq:dd-expand-4}. 
 Also from \eqref{eq:lem:ana-1-1} we get
\begin{eqnarray} \label{eq:dd-expand-5}
\frac{\tau^2}{\mu^2} \langle d_y, [\Phi'']^{-1}d_y \rangle \leq  q. 
\end{eqnarray}
Let us see how to use these bounds in the analysis of the predictor and corrector steps. 
\subsection{Predictor step}
Let us first show how the predictor step increases $\mu$. For analyzing this, we prove a result about the structure of $U$ defined in \eqref{eq:mat-U}. We start with a lemma:
\begin{lemma}  \label{lem:proj-1}
Assume that $\cH$ is a symmetric positive definite matrix and $U$ is a matrix of proper size with linearly independent columns. Then, for any given vector $f$ of proper size, we have
\begin{eqnarray} \label{eq:corrector-5}
f^\top U \left( U^\top \cH U \right)^{-1} U^\top f = f^\top \cH^{-1}f- f^\top \cH^{-1}{U^\perp}^\top\left( U^\perp \cH^{-1} {U^\perp}^\top\right)^{-1} U^\perp \cH^{-1} f,
\end{eqnarray}
where $U^\perp$ is a matrix whose rows form a basis for the kernel of $U^\top$. 
\end{lemma}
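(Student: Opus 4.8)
The identity to be proven is a standard "oblique projection complement" formula, and the cleanest route is to verify it by a direct linear-algebra argument based on orthogonal decomposition with respect to the inner product induced by $\cH^{-1}$. The plan is to introduce the $\cH^{-1}$-inner product $\langle p, q \rangle_{\cH^{-1}} := p^\top \cH^{-1} q$ and observe that the quantity $f^\top U (U^\top \cH U)^{-1} U^\top f$ is precisely the squared $\cH^{-1}$-norm of the $\cH^{-1}$-orthogonal projection of $\cH^{-1} f$ onto $\range(U)$. Indeed, writing $g := \cH^{-1} f$, the vector $P g := U(U^\top \cH U)^{-1} U^\top \cH\, g$ is the $\cH^{-1}$-orthogonal projector onto $\range(U)$ (one checks $P g \in \range(U)$ and $\langle g - Pg, U w\rangle_{\cH^{-1}} = 0$ for all $w$, using $U^\top \cH U$ invertible since $U$ has independent columns), and $(Pg)^\top \cH (Pg) = f^\top U(U^\top \cH U)^{-1} U^\top f$.

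Next I would identify the second term on the right-hand side as the squared $\cH^{-1}$-norm of the complementary projection. The key structural fact is that, with respect to $\langle \cdot,\cdot\rangle_{\cH^{-1}}$, the orthogonal complement of $\range(U)$ is exactly $\cH\cdot \ker(U^\top) = \cH \cdot \range({U^\perp}^\top)$: for $v = {U^\perp}^\top a$ we have $\langle U w, \cH v\rangle_{\cH^{-1}} = w^\top U^\top {U^\perp}^\top a = 0$ since ${U^\perp} U = 0$. A dimension count (columns of $U$ plus rows of $U^\perp$ sum to the ambient dimension, and both families are independent) shows this is the full orthogonal complement. Therefore $\cH^{-1} f = g = Pg + (g - Pg)$ is the $\cH^{-1}$-orthogonal decomposition, and by Pythagoras in the $\cH^{-1}$-norm,
\[
f^\top \cH^{-1} f = \|g\|_{\cH^{-1}}^2 = \|Pg\|_{\cH^{-1}}^2 + \|g - Pg\|_{\cH^{-1}}^2 = f^\top U(U^\top \cH U)^{-1}U^\top f + \|g-Pg\|_{\cH^{-1}}^2.
\]
It then remains to check that $\|g - Pg\|_{\cH^{-1}}^2$ equals $f^\top \cH^{-1}{U^\perp}^\top (U^\perp \cH^{-1}{U^\perp}^\top)^{-1} U^\perp \cH^{-1} f$; this follows because $g - Pg$ is the $\cH^{-1}$-orthogonal projection of $g$ onto the complement $\cH \cdot \range({U^\perp}^\top)$, and the projector onto that subspace, expressed in the $\cH^{-1}$-norm, has exactly the stated form — concretely, $g - Pg = \cH\, {U^\perp}^\top (U^\perp \cH^{-1} {U^\perp}^\top)^{-1} U^\perp \cH^{-1} f$, which one verifies by checking it lies in $\cH\cdot\range({U^\perp}^\top)$ and that $g$ minus it is $\cH^{-1}$-orthogonal to that subspace (equivalently, lies in $\range(U)$, using $U^\perp U = 0$).

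The only mild subtlety — and the step I would be most careful about — is confirming that $U^\perp \cH^{-1} {U^\perp}^\top$ is invertible, so that the right-hand side is well-defined: this holds because $\cH^{-1}$ is positive definite and ${U^\perp}^\top$ has independent columns (the rows of $U^\perp$ being a basis of $\ker(U^\top)$), so $a^\top U^\perp \cH^{-1} {U^\perp}^\top a = 0$ forces ${U^\perp}^\top a = 0$ hence $a = 0$. Everything else is routine bookkeeping with projectors. Alternatively, one can bypass the geometric language entirely and verify the identity purely algebraically by multiplying both sides by suitable invertible matrices and using block-matrix inverse formulas for $\bigl[\begin{smallmatrix} U & \cH^{-1}{U^\perp}^\top\end{smallmatrix}\bigr]$, but the inner-product/projection argument is shorter and more transparent, so that is the approach I would present.
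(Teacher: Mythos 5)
Your overall strategy (a weighted-inner-product orthogonal decomposition plus Pythagoras) is a legitimate route to \eqref{eq:corrector-5} and differs from the paper's proof, which avoids projections altogether: there one solves $U^\top \cH U g = U^\top f$, observes $\cH U g - f \in \ker(U^\top)$ so that $\cH U g = f + {U^\perp}^\top w$, eliminates $w$ by multiplying with $U^\perp \cH^{-1}$, substitutes back, and multiplies by $f^\top$. However, as written your argument has a systematic $\cH$ versus $\cH^{-1}$ confusion that makes its central steps false. The map $P := U(U^\top \cH U)^{-1}U^\top \cH$ is the orthogonal projector onto $\range(U)$ with respect to the $\cH$-inner product, not the $\cH^{-1}$-inner product: the cancellation in $\langle g - Pg, Uw\rangle$ only works if the pairing is $p^\top \cH q$. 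Consequently your Pythagoras display is wrong as labeled: with your definitions, $\|g\|_{\cH^{-1}}^2 = f^\top \cH^{-3} f \neq f^\top \cH^{-1} f$, and the quantity you compute for the projected part, $(Pg)^\top \cH (Pg)$, is an $\cH$-norm, not an $\cH^{-1}$-norm. Likewise the residual does not lie in $\cH\cdot\range({U^\perp}^\top)$ and your closed form for it is incorrect; the correct identity is $g - Pg = \cH^{-1}{U^\perp}^\top \left(U^\perp \cH^{-1}{U^\perp}^\top\right)^{-1} U^\perp \cH^{-1} f$, obtained by noting $U^\top \cH (g-Pg)=0$, i.e.\ $g-Pg \in \cH^{-1}\range({U^\perp}^\top)$, and applying $U^\perp$ to $g = Pg + (g-Pg)$.

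The repair is mechanical and preserves your idea: either run the entire argument in the $\cH$-inner product applied to $g=\cH^{-1}f$, whose $\cH$-orthogonal decomposition along $\range(U)$ and $\cH^{-1}\range({U^\perp}^\top)$ gives $f^\top\cH^{-1}f = \|Pg\|_\cH^2 + \|g-Pg\|_\cH^2$ with the two summands equal to the two terms in \eqref{eq:corrector-5}; or, equivalently, decompose $f$ itself in the $\cH^{-1}$-inner product along the $\cH^{-1}$-orthogonal complementary subspaces $\range(\cH U)$ and $\range({U^\perp}^\top)$ (their orthogonality is exactly your computation $w^\top U^\top {U^\perp}^\top a = 0$). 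Your dimension count and the invertibility arguments for $U^\top \cH U$ and $U^\perp \cH^{-1} {U^\perp}^\top$ are fine. Once the inner products are matched to the objects they actually orthogonalize, the proof goes through and is a clean alternative to the paper's elimination argument, which buys you a shorter and more elementary verification at the cost of less geometric insight.
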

\begin{proof}
As $\cH$ is symmetric positive definite and $U$ has linearly independent columns, the system $U^\top \cH U g=U^\top f$ has a unique solution $g$. By definition of $U^\perp$, there exists $w$ such that $\cH U g=f+{U^\perp}^\top w$. Multiplying both sides by $\cH^{-1}$ gives us $U g=\cH^{-1}f+\cH^{-1} {U^\perp}^\top w$. To calculate $w$, we multiply both sides of the last equation from the left by $U^\perp$. Note that $U^\perp U=0$ and $U^\perp \cH^{-1} {U^\perp}^\top$ is invertible. If we solve for $w$ and substitute it in $U g=\cH^{-1}f+\cH^{-1} {U^\perp}^\top w$, we get 
\begin{eqnarray} \label{eq:corrector-4}
Ug=\cH^{-1}f-\cH^{-1}{U^\perp}^\top\left( U^\perp \cH^{-1} {U^\perp}^\top\right)^{-1} U^\perp \cH^{-1} f.
\end{eqnarray}
If we multiply both sides of \eqref{eq:corrector-4} from the left by $f^\top$ and substitute $g=(U^\top \cH U)^{-1}U^\top f$, we get \eqref{eq:corrector-5}.
\end{proof}
We are interested in matrix $U \in \R^{(2m+2)\times (m+1)}$ defined in \eqref{eq:mat-U}, which has a very special structure. For this $U$, one option for $U^\perp$, defined in Lemma \ref{lem:proj-1}, is
\begin{eqnarray} \label{eq:mat-U-perp}
U^\perp=\left [\begin{array}{cccc}  
0  & c & A^\top & 0 \\ -c_A & 0 & 0 & 1\\ -F & 0 & 0 & 0  \end{array} \right]. 
\end{eqnarray} 
If we compare $U$ and $U^\perp$, we see that the rows of $U$ is a permutation of the columns of $U^\perp$. Explicitly
\begin{eqnarray} \label{eq:corrector-6}
U^\perp=U^\top P, \ \ \ \ \ P:=\left[ \begin{array}{cc} 0 & I_{m+1} \\ I_{m+1} & 0 \end{array}\right].
\end{eqnarray}
We have the following lemma:
\begin{lemma} \label{lem:proj-2}
Let $\bar H$ be a symmetric positive definite matrix and $\mu >0$. Assume the setup of Lemma \ref{lem:proj-1} where $\cH$ and $f$ have the form
\begin{eqnarray} \label{eq:corrector-7}
\cH:=\left[ \begin{array}{cc} \bar H & 0 \\ 0 & \frac{1}{\mu^2} \bar H^{-1} \end{array}\right], \ \ \ f:=\left[ \begin{array}{c} f_1 \\ f_2 \end{array}\right], 
\end{eqnarray}
such that $f_1$ and $f_2$ further satisfy $f_1=\mu \bar Hf_2$ or $f_1=-\mu \bar Hf_2$. Also assume that \eqref{eq:corrector-6} holds for $U$ and $U^\perp$. Then, 
\begin{eqnarray} \label{eq:corrector-9}
f^\top U \left( U^\top \cH U \right)^{-1} U^\top f = \frac 12 f^\top \cH^{-1}f.
\end{eqnarray}
\end{lemma}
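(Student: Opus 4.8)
The plan is to invoke Lemma~\ref{lem:proj-1} and then show that, under the two extra hypotheses, the ``correction'' term appearing on the right-hand side of \eqref{eq:corrector-5} is itself equal to $f^\top U\bigl(U^\top\cH U\bigr)^{-1}U^\top f$. Once that is established, \eqref{eq:corrector-5} reads $f^\top U(U^\top\cH U)^{-1}U^\top f=f^\top\cH^{-1}f-f^\top U(U^\top\cH U)^{-1}U^\top f$, and solving for the left-hand side produces exactly the factor $\tfrac12$ in \eqref{eq:corrector-9}.

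The first step is to record the elementary properties of $P$ from \eqref{eq:corrector-6}: it is symmetric, $P^\top=P$, and an involution, $P^2=I$. Next I exploit the self-duality built into $\cH$ in \eqref{eq:corrector-7}: since its lower-right block is $\mu^{-2}$ times the inverse of its upper-left block, a one-line block multiplication gives
\[
P\,\cH^{-1}\,P=\left[\begin{array}{cc}\mu^2\bar H & 0\\ 0 & \bar H^{-1}\end{array}\right]=\mu^2\cH .
\]
Combining this with $U^\perp=U^\top P$ (hence ${U^\perp}^\top=PU$, using $P^\top=P$) yields
\[
U^\perp\cH^{-1}{U^\perp}^\top=U^\top\bigl(P\,\cH^{-1}\,P\bigr)U=\mu^2\,U^\top\cH U ,
\]
so $\bigl(U^\perp\cH^{-1}{U^\perp}^\top\bigr)^{-1}=\mu^{-2}\bigl(U^\top\cH U\bigr)^{-1}$.

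The second step uses the hypothesis $f_1=\pm\mu\bar H f_2$. Writing $P\cH^{-1}f$ in blocks, its two entries are $\mu^2\bar H f_2=\pm\mu f_1$ and $\bar H^{-1}f_1=\pm\mu f_2$, so $P\cH^{-1}f=\pm\mu f$; that is, $f$ is, up to sign, a fixed point of the map $P\cH^{-1}$. Consequently $U^\perp\cH^{-1}f=U^\top P\cH^{-1}f=\pm\mu\,U^\top f$, and transposing, $f^\top\cH^{-1}{U^\perp}^\top=\pm\mu\,f^\top U$.

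Finally I substitute these three identities into the correction term of \eqref{eq:corrector-5}:
\[
f^\top\cH^{-1}{U^\perp}^\top\bigl(U^\perp\cH^{-1}{U^\perp}^\top\bigr)^{-1}U^\perp\cH^{-1}f
=(\pm\mu\,f^\top U)\,\frac{1}{\mu^2}\bigl(U^\top\cH U\bigr)^{-1}(\pm\mu\,U^\top f)
=f^\top U\bigl(U^\top\cH U\bigr)^{-1}U^\top f ,
\]
the sign having squared away. Plugging this back into \eqref{eq:corrector-5} and rearranging gives \eqref{eq:corrector-9}. I do not foresee a genuine obstacle here; the only points that need a bit of care are the bookkeeping of the $\pm$ sign (harmless, since it enters squared) and the observation that the $U^\perp$ in \eqref{eq:corrector-6} legitimately satisfies the hypotheses of Lemma~\ref{lem:proj-1} — which it does, as its rows form a basis for $\ker U^\top$ and hence $U^\perp\cH^{-1}{U^\perp}^\top$ is invertible.
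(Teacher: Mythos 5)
Your proposal is correct and follows essentially the same route as the paper's proof: both use the identity $\cH^{-1}=\mu^2 P\cH P$ together with $U^\perp=U^\top P$ and the hypothesis $f_1=\pm\mu\bar H f_2$ to show that the correction term in \eqref{eq:corrector-5} equals $f^\top U\left(U^\top\cH U\right)^{-1}U^\top f$ itself, from which the factor $\tfrac12$ follows. Your packaging of the hypothesis as $P\cH^{-1}f=\pm\mu f$ is just a slightly cleaner bookkeeping of the same computation.
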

\begin{proof}
We can verify that $\cH^{-1}=\mu^2P \cH P$ for $P$ defined in \eqref{eq:corrector-6}. Using this and \eqref{eq:corrector-6}, for the second term in the RHS of \eqref{eq:corrector-5} we have
\begin{eqnarray} \label{eq:corrector-8}
\ \ \ f^\top \cH^{-1}{U^\perp}^\top\left( U^\perp \cH^{-1} {U^\perp}^\top\right)^{-1} U^\perp \cH^{-1}f=\mu^2  \left[ \begin{array}{c} f_2 \\ f_1 \end{array}\right]^\top \cH U \left( U^\top \cH U \right)^{-1} U^\top \cH\left[ \begin{array}{c} f_2 \\ f_1 \end{array}\right].
\end{eqnarray}
Using $f_1=\mu \bar Hf_2$ or $f_1=-\mu \bar Hf_2$, \eqref{eq:corrector-8} equals $f^\top U \left( U^\top \cH U \right)^{-1} U^\top f$ and so \eqref{eq:corrector-5} reduces to \eqref{eq:corrector-9}. 
\end{proof}
Let us see how Lemma \ref{lem:proj-2} is useful for our setup. We define
\begin{eqnarray} \label{eq:corrector-14}
\psi^p:=\left[ \begin{array}{c}f_1 \\ f_2  \end{array}\right], \ \  f_1:= \left[ \begin{array}{c}\frac{1}{\tau} \Phi' \\ -\frac{1}{\tau} \langle \Phi',Ax+\frac{1}{\tau} z^0 \rangle - \frac{ \xi \vartheta}{\tau} \end{array}\right], \ \  f_2:= \left[ \begin{array}{c}   \frac{\tau}{\mu} \left(Ax+\frac{1}{\tau}z^0 \right)  \\ \frac{\tau}{\mu} \end{array}\right].
\end{eqnarray}
For matrix $\bar H$ defined in \eqref{hessian-phi-2}, we can directly verify 
\begin{eqnarray}   \label{eq:pred-4}
&& \frac{1}{\mu} \left[ \begin{array}{c}\frac{1}{\tau} \Phi' \\ -\frac{1}{\tau} \langle \Phi',Ax+\frac{1}{\tau} z^0 \rangle - \frac{ \xi \vartheta}{\tau} \end{array}\right] = -\bar H \left[ \begin{array}{c}   \frac{\tau}{\mu} \left(Ax+\frac{1}{\tau}z^0 \right)  \\ \frac{\tau}{\mu} \end{array}\right].
\end{eqnarray}
Therefore, $f_1=-\mu \bar Hf_2$ and so \eqref{eq:corrector-9} holds for our setup. 
Now, we prove the following lemma:
\begin{lemma}  \label{lem:corr-1}
Consider $\cH$ defined in \eqref{eq:system-pred-1} and $\psi^p$ defined in \eqref{eq:corrector-14} for a point $(x,\tau,y) \in Q_{DD}$. Then, we have
\begin{eqnarray} \label{eq:corrector-11}
\langle U^\top \psi^p, \left[U^\top  \cH\left(\bar H, \mu^2 \bar H\right) U \right]^{-1} U^\top \psi^p \rangle= \xi \vartheta.
\end{eqnarray}
\end{lemma}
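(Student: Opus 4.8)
The plan is to reduce the quadratic form on the left-hand side of \eqref{eq:corrector-11} to a one-line evaluation of $\bar H$ by invoking the structural identity already established in Lemma \ref{lem:proj-2}. First I would note that the middle matrix $\cH\left(\bar H, \mu^2 \bar H\right)$ is exactly the block-diagonal matrix $\left[\begin{smallmatrix} \bar H & 0 \\ 0 & \frac{1}{\mu^2}\bar H^{-1}\end{smallmatrix}\right]$ required in \eqref{eq:corrector-7}, that $U$ in \eqref{eq:mat-U} has linearly independent columns (using $\ker A=\{0\}$), and that \eqref{eq:corrector-6} holds for this $U$ and the $U^\perp$ in \eqref{eq:mat-U-perp}. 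The one point that needs checking is the hypothesis $f_1=\pm\mu\bar H f_2$ of Lemma \ref{lem:proj-2}: for $\psi^p=(f_1,f_2)$ as in \eqref{eq:corrector-14}, this is precisely the directly-verifiable identity \eqref{eq:pred-4}, which gives $f_1=-\mu\bar H f_2$. Hence Lemma \ref{lem:proj-2} applies and yields
\[
\langle U^\top \psi^p, \left[U^\top  \cH\left(\bar H, \mu^2 \bar H\right) U \right]^{-1} U^\top \psi^p \rangle = \tfrac12 \langle \psi^p, \cH\left(\bar H,\mu^2\bar H\right)^{-1}\psi^p \rangle = \tfrac12\left( f_1^\top \bar H^{-1} f_1 + \mu^2 f_2^\top \bar H f_2 \right).
\]

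Next I would use $f_1=-\mu\bar H f_2$ a second time, now to collapse $f_1^\top \bar H^{-1} f_1 = \mu^2 f_2^\top \bar H f_2$, so the right-hand side above is simply $\mu^2 f_2^\top \bar H f_2$. Writing $u:=Ax+\frac{1}{\tau}z^0$, the vector $f_2$ in \eqref{eq:corrector-14} equals $\frac{\tau}{\mu}(u,1)$, so $\mu^2 f_2^\top \bar H f_2 = \tau^2\, \bar H(x,\tau)[(u,1),(u,1)]$.

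Finally I would evaluate $\bar H(x,\tau)[(u,1),(u,1)]$ from the explicit formula \eqref{hessian-phi-22}: substituting $(d,d_\tau)=(u,1)$ makes the displacement $\frac{d}{\tau}-\frac{d_\tau}{\tau}u$ vanish, so the first two terms in \eqref{hessian-phi-22} disappear and only $\xi\frac{d_\tau^2}{\tau^2}\vartheta = \frac{\xi\vartheta}{\tau^2}$ remains. Therefore $\tau^2\,\bar H(x,\tau)[(u,1),(u,1)] = \xi\vartheta$, which is \eqref{eq:corrector-11}.

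There is no substantial obstacle here; once Lemma \ref{lem:proj-2} is available, the argument is bookkeeping. The only place to be careful is matching the scaling parameters — that the $\mu$ inside $\psi^p$ and the $\mu^2$ in $\cH\left(\bar H,\mu^2\bar H\right)$ are the same one used to invoke Lemma \ref{lem:proj-2}, and that \eqref{eq:pred-4} is verified with the correct signs so that the hypothesis $f_1=-\mu\bar H f_2$ genuinely holds — both of which are routine checks.
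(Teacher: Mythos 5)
Your proof is correct and takes essentially the same route as the paper: invoke Lemma \ref{lem:proj-2} through the identity \eqref{eq:pred-4} (so $f_1=-\mu\bar H f_2$) to reduce the left-hand side to $\tfrac12(\psi^p)^\top\cH^{-1}\psi^p$, and then finish with a one-line direct evaluation. The only (harmless) difference is in that last evaluation: the paper verifies $-\mu\langle f_1,f_2\rangle=\xi\vartheta$ directly (its \eqref{eq:pred-5}), whereas you compute $\mu^2 f_2^\top\bar H f_2=\tau^2\,\bar H(x,\tau)[(u,1),(u,1)]=\xi\vartheta$ from \eqref{hessian-phi-22}; given $f_1=-\mu\bar H f_2$ these two checks are equivalent.
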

\begin{proof}
Equation \eqref{eq:pred-4} confirms that $f_1=-\mu \bar Hf_2$, so we have equation \eqref{eq:corrector-9}. Hence, we need to show that $( \psi^p)^\top \cH^{-1} \psi^p=2\xi \vartheta$ to get our result.  This holds since by direct verification we have
\begin{eqnarray} \label{eq:pred-5}
 -\mu \left[ \begin{array}{c}\frac{1}{\tau} \Phi' \\ -\frac{1}{\tau} \langle \Phi',Ax+\frac{1}{\tau} z^0 \rangle - \frac{ \xi \vartheta}{\tau} \end{array}\right] ^\top \left[ \begin{array}{c}   \frac{\tau}{\mu} \left(Ax+\frac{1}{\tau}z^0 \right)  \\ \frac{\tau}{\mu} \end{array}\right] = \xi \vartheta, 
\end{eqnarray}
and $( \psi^p)^\top \cH^{-1} \psi^p$, by using \eqref{eq:pred-4},  is exactly the summation of two terms like \eqref{eq:pred-5}.
\end{proof}
Now we are ready to prove the following main proposition about how the predictor step increases $\mu$. 
\begin{proposition} \label{prop:mu-increase}
Assume that $(x,\tau,y) \in Q_{DD}$ and conditions \eqref{eq:lem:hessian-close-1-1} and \eqref{eq:lem:hessian-close-1-2} hold. Let our search direction be the solution of \eqref{eq:system-pred-1} with $r_{RHS}=r^0/\mu^2$ and any $\hat H$ that satisfies \eqref{eq:lem:hessian-close-1-2}. Let $\alpha_2 > 0$ be such that $\tau+\alpha_2 d_\tau>0$ and choose $\alpha_1=\frac{\alpha_2}{\tau+\alpha_2 d_\tau}$. Then, for the updates in \eqref{eq:update}  we have
\begin{eqnarray} \label{eq:prop:mu-increase-1}
(1-\bar \epsilon)^2 \alpha_2  \leq \mu(x^+, \tau^+, y^+)-\mu(x,\tau,y)  \leq  \frac{\alpha_2}{(1-\bar \epsilon)^2}.
\end{eqnarray}
\end{proposition}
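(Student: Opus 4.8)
The plan is to evaluate $\mu(x^+,\tau^+,y^+)-\mu(x,\tau,y)$ exactly using the last identity in \eqref{eq:pred-3}, namely that this difference equals $\frac{\alpha_2}{\xi\vartheta}\,d^\top r^0$ where $d^\top := [\bar d_x^\top\ \ d_\tau\ \ d_v^\top]$ solves \eqref{eq:system-pred-1} with $r_{RHS}=r^0/\mu^2$. So the whole estimate reduces to bounding the quadratic form $d^\top r^0$ from above and below. Since $d = [U^\top \cH(\bar H,\hat H) U]^{-1}(r^0/\mu^2)$, we have
\[
d^\top r^0 = \frac{1}{\mu^2}\,\langle r^0,\ [U^\top \cH(\bar H,\hat H) U]^{-1} r^0\rangle,
\]
which is what we must pin down. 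First I would replace $\hat H$ by the ``reference'' choice $\mu^2\bar H$, for which Lemma \ref{lem:proj-2} (specifically \eqref{eq:corrector-9}, valid here because \eqref{eq:pred-4} gives $f_1=-\mu\bar H f_2$) together with Lemma \ref{lem:corr-1} provides an exact evaluation: the analogous form equals $\tfrac12(\psi^p)^\top\cH^{-1}\psi^p$-type quantities, and one checks that with $r^0$ in place of $U^\top\psi^p$ the clean value $\xi\vartheta$ (up to the $\mu^2$ scaling) comes out. The key observation linking the two is that $r^0$, $U^\top\psi^p$, and the central-path data are related so that $\langle r^0,[U^\top\cH(\bar H,\mu^2\bar H)U]^{-1}r^0\rangle$ evaluated at the central-path point $(x(\mu),\tau(\mu))$ is exactly $\mu^2\xi\vartheta$, giving $\mu(x^+,\tau^+,y^+)-\mu(x,\tau,y)=\alpha_2$ in the idealized case.

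The second step is a perturbation argument to pass from the central-path point to a general $(x,\tau,y)$ with $\Omega_\mu(x,\tau,y)\le\epsilon$ and from $\hat H$ equal to $\mu^2\bar H$ to a general admissible $\hat H$. Conditions \eqref{eq:lem:hessian-close-1-1} and \eqref{eq:lem:hessian-close-1-2} are precisely tailored for this: the first says $\bar H(x,\tau)$ is sandwiched between $(1-\bar\epsilon)^2$ and $(1-\bar\epsilon)^{-2}$ times $\bar H(x(\mu),\tau(\mu))$, and the second gives the same two-sided sandwich for $\mu^2[\hat H(x,\tau,y)]^{-1}$ relative to $[\bar H(x(\mu),\tau(\mu))]^{-1}$. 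Since $U^\top\cH U$ is block-built from $\bar H$ and $\hat H^{-1}$, these sandwich relations propagate (monotonically) to $[U^\top\cH(\bar H,\hat H)U]^{-1}$, and hence to the scalar $\langle r^0,[U^\top\cH(\bar H,\hat H)U]^{-1}r^0\rangle$, multiplying the ideal value $\mu^2\xi\vartheta$ by a factor in $[(1-\bar\epsilon)^2,(1-\bar\epsilon)^{-2}]$. Dividing by $\xi\vartheta$ and multiplying by $\alpha_2/\mu^2 \cdot \mu^2 = \alpha_2$ (after accounting for the $1/\mu^2$ in $r_{RHS}$) yields exactly \eqref{eq:prop:mu-increase-1}.

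The main obstacle is the bookkeeping needed to show that $\langle r^0,[U^\top\cH(\bar H,\mu^2\bar H)U]^{-1}r^0\rangle = \mu^2\xi\vartheta$ at the central-path point: one must identify $r^0$ with the appropriate combination of $U^\top\psi^p$ and central-path quantities so that Lemma \ref{lem:corr-1} applies, which requires carefully using the defining equations \eqref{trans-dd-path-1-copy-2} of the central path (in particular parts (b), (c), (d)) to rewrite $r^0$, and then invoking the special structure \eqref{eq:corrector-6} that makes Lemma \ref{lem:proj-2} fire. A secondary subtlety is making sure the monotonicity step is legitimate — i.e.\ that $A\mapsto B^\top A B$ and matrix inversion preserve the Loewner order in the direction we need, and that the block structure of $\cH$ does not mix the two sandwiched pieces in a way that breaks the bound; this is routine but must be stated cleanly. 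Everything else (the choice $\alpha_1=\alpha_2/(\tau+\alpha_2 d_\tau)$, the reduction to $d^\top r^0$) is already supplied by \eqref{eq:pred-3}.
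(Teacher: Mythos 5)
Your proposal is correct and follows essentially the same route as the paper's proof: reduce $\mu^+-\mu$ via \eqref{eq:pred-3} to $\frac{\alpha_2}{\xi\vartheta\mu^2}\langle r^0,[U^\top\cH(\bar H,\hat H)U]^{-1}r^0\rangle$, use the central-path identity $U^\top\psi^p(\mu)=-\frac{1}{\mu}r^0$ together with Lemmas \ref{lem:proj-2} and \ref{lem:corr-1} to get the exact value $\xi\vartheta$ at $(x(\mu),\tau(\mu))$, and then transfer to the general point and general admissible $\hat H$ by the sandwich conditions \eqref{eq:lem:hessian-close-1-1}--\eqref{eq:lem:hessian-close-1-2}. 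The "bookkeeping" step you flag is exactly the verification the paper performs, so there is no gap.
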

\begin{proof}
A key to the proof is that on the central path we have $U^\top \psi^p(\mu)=-\frac{1}{\mu} r^0$, where $\psi^p$ is defined in \eqref{eq:corrector-14} and $r^0$ is defined in \eqref{eq:system-pred-1}. This can be directly verified by using \eqref{trans-dd-path-1-copy-2} and \eqref{eq:dd-4-2} for the points on the central path. 
By starting from \eqref{eq:pred-3} for  $\mu(x^+, \tau^+, y^+)-\mu(x,\tau,y)$, we can continue
\begin{eqnarray} \label{eq:dd-15-edited}
\begin{array}{rcl}
&& \mu(x^+, \tau^+, y^+)-\mu(x,\tau,y)  \\
&=& \frac{\alpha_2}{\xi \vartheta} [\bar d_x^\top \ \ \ d_\tau \ \ \ d_v^\top]  r^0  \\
&=&  \frac{\alpha_2}{\xi \vartheta}  \frac{1}{\mu^2}  \langle r^0, [U^\top \cH(\bar H, \hat H) U]^{-1} r^0 \rangle,     \hfill \text{using \eqref{eq:system-pred-1},}  \\
                              &=&  \frac{\alpha_2}{\xi \vartheta} \langle U^\top \psi^p(\mu), [U^\top  \cH(\bar H, \hat H) U ]^{-1} U^\top \psi^p(\mu) \rangle, \ \ \ \ \hfill \text{using $U^\top \psi^p(\mu)=-\frac{1}{\mu} r^0$}. 
\end{array}
\end{eqnarray}
We get the desired result by using conditions \eqref{eq:lem:hessian-close-1-1} and \eqref{eq:lem:hessian-close-1-2} and then utilizing Lemma \ref{lem:corr-1} for the points on the central path. 
\end{proof}
Proposition \ref{prop:mu-increase} implies that the amount of increase in $\mu$ depends directly on $\alpha_2$. Therefore, we need to show how large $\alpha_2$ can be chosen in the predictor step.

\begin{lemma}  \label{lem:ana-1}
Assume that $(x,\tau,y) \in Q_{DD}$  and conditions \eqref{eq:lem:hessian-close-1-1} and \eqref{eq:lem:hessian-close-1-2} hold. Then, \eqref{eq:lem:ana-1-1} holds with
$q:=\frac{1}{(1-\bar \epsilon)^6}  \frac{\xi \vartheta}{\mu^2}$ for the solution of \eqref{eq:system-pred-1} with $r_{RHS}=r^0/\mu^2$ and any $\hat H$ that satisfies \eqref{eq:lem:hessian-close-1-2}. 
\end{lemma}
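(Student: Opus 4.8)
The plan is to transfer the scalar identity of Lemma \ref{lem:corr-1}, which holds on the central path, to the current point by means of the spectral bounds \eqref{eq:lem:hessian-close-1-1} and \eqref{eq:lem:hessian-close-1-2}. As explained in the discussion preceding the lemma (via \eqref{hessian-phi-2-3} and \eqref{inverse-formula-2-2}), with $d$ the solution of \eqref{eq:system-pred-1} and $f := Ud$, inequality \eqref{eq:lem:ana-1-1} is nothing but the expansion of $f^\top \cH(\bar H,\mu^2\bar H) f \le q$; so it suffices to establish $f^\top \cH(\bar H,\mu^2\bar H) f \le q$. Write $\bar H := \bar H(x,\tau)$, $\hat H := \hat H(x,\tau,y)$, $\bar H_\mu := \bar H(x(\mu),\tau(\mu))$, and abbreviate $M := U^\top \cH(\bar H,\hat H) U$ (the coefficient matrix of \eqref{eq:system-pred-1}), $M_1 := U^\top \cH(\bar H,\mu^2\bar H) U$, and $M_\mu := U^\top \cH(\bar H_\mu,\mu^2\bar H_\mu) U$. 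Since $f = Ud$, we have $f^\top \cH(\bar H,\mu^2\bar H) f = d^\top M_1 d$, and the goal becomes $d^\top M_1 d \le q$.

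First I would translate \eqref{eq:lem:hessian-close-1-1} and \eqref{eq:lem:hessian-close-1-2} into Löwner comparisons of $\cH(\bar H,\mu^2\bar H)$ and $\cH(\bar H,\hat H)$ with $\cH(\bar H_\mu,\mu^2\bar H_\mu)$. The block-diagonal structure of $\cH(\cdot,\cdot)$ in \eqref{eq:system-pred-1} makes this routine: for the $(1,1)$-blocks use \eqref{eq:lem:hessian-close-1-1} directly; for the $(2,2)$-block of $\cH(\bar H,\mu^2\bar H)$ apply the inclusion-reversing map $X\mapsto X^{-1}$ to \eqref{eq:lem:hessian-close-1-1}; for the $(2,2)$-block of $\cH(\bar H,\hat H)$ invert \eqref{eq:lem:hessian-close-1-2}. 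Conjugating by $U$ (which preserves the order) gives
\[
M_1 \preceq \frac{1}{(1-\bar\epsilon)^2}\,M_\mu, \qquad (1-\bar\epsilon)^2\,M_\mu \preceq M,
\]
hence also $M_1 \preceq \frac{1}{(1-\bar\epsilon)^4}\,M$.

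Next comes the computation. From \eqref{eq:system-pred-1} with $r_{RHS}=r^0/\mu^2$ we have $d = \frac{1}{\mu^2} M^{-1} r^0$, so
\[
d^\top M_1 d = \frac{1}{\mu^4}\,(r^0)^\top M^{-1} M_1 M^{-1} r^0 \;\le\; \frac{1}{(1-\bar\epsilon)^4\mu^4}\,(r^0)^\top M^{-1} r^0 \;\le\; \frac{1}{(1-\bar\epsilon)^6\mu^4}\,(r^0)^\top M_\mu^{-1} r^0,
\]
using first $M^{-1}M_1M^{-1}\preceq\frac{1}{(1-\bar\epsilon)^4}M^{-1}$ and then $M^{-1}\preceq\frac{1}{(1-\bar\epsilon)^2}M_\mu^{-1}$. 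The last factor is where the central path is used: as recorded in the proof of Proposition \ref{prop:mu-increase}, at the point $(x(\mu),\tau(\mu),y(\mu))$ one has $U^\top\psi^p(\mu) = -\tfrac{1}{\mu} r^0$ with $\psi^p$ from \eqref{eq:corrector-14}, while $M_\mu$ is precisely the matrix $U^\top\cH(\bar H,\mu^2\bar H) U$ of Lemma \ref{lem:corr-1} formed at that point (whose $\mu$-value is $\mu$). Hence Lemma \ref{lem:corr-1} yields $\frac{1}{\mu^2}(r^0)^\top M_\mu^{-1} r^0 = \langle U^\top\psi^p(\mu), M_\mu^{-1} U^\top\psi^p(\mu)\rangle = \xi\vartheta$, i.e.\ $(r^0)^\top M_\mu^{-1} r^0 = \mu^2\xi\vartheta$. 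Substituting back, $d^\top M_1 d \le \frac{\xi\vartheta}{(1-\bar\epsilon)^6\mu^2} = q$, as claimed.

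The argument is essentially bookkeeping and I do not anticipate a genuine obstacle; the one thing that needs care is keeping straight which of the three matrices $M$, $M_1$, $M_\mu$ is built at the current point and which at the central path point, and accounting correctly for the three factors of $(1-\bar\epsilon)^{-2}$ — one from $M_1$ versus $M_\mu$, one from $M_\mu$ versus $M$, and one from the inversion $M^{-1}$ versus $M_\mu^{-1}$ — that combine to give the exponent $6$ in $q$.
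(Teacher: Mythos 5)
Your proposal is correct and follows essentially the same route as the paper's proof: the same chain $d^\top M_1 d = \frac{1}{\mu^4}(r^0)^\top M^{-1}M_1M^{-1}r^0 \le \frac{1}{(1-\bar\epsilon)^4\mu^4}(r^0)^\top M^{-1}r^0 \le \frac{1}{(1-\bar\epsilon)^6\mu^4}(r^0)^\top M_\mu^{-1}r^0$, closed by the central-path identity $U^\top\psi^p(\mu)=-\frac{1}{\mu}r^0$ and Lemma \ref{lem:corr-1}. You merely spell out the block-diagonal Löwner comparisons that the paper leaves implicit.
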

\begin{proof}
Let us define $f=Ud$ for $d$ the solution of  \eqref{eq:system-pred-1}. Then, by using \eqref{eq:lem:hessian-close-1-1} and \eqref{eq:lem:hessian-close-1-2}, we have
\begin{eqnarray} \label{eq:lem:ana-1-3-2}
\begin{array}{rcl}
f^\top \cH(\bar H, \mu^2 \bar H) f &=& \frac{1}{\mu^4}\langle    \left[U^\top  \cH(\bar H, \hat H) U \right]^{-1} r^0 ,     (U^\top \cH(\bar H, \mu^2 \bar H) U)   \left[U^\top  \cH(\bar H, \hat H) U \right]^{-1}  r^0 \rangle  \hfill \\
&\leq& \frac{1}{(1-\bar \epsilon)^4 \mu^4}  \langle r^0, \left[U^\top  \cH(\bar H, \hat  H) U \right]^{-1} r^0 \rangle,  \hfill \text{using \eqref{eq:lem:hessian-close-1-1} and \eqref{eq:lem:hessian-close-1-2},}  \\
&\leq& \frac{1}{(1-\bar \epsilon)^6 \mu^4}  \langle r^0, \left[U^\top  \cH\left(\bar H(\mu), \mu^2 \bar H(\mu)\right) U \right]^{-1} r^0 \rangle, \hfill \text{using \eqref{eq:lem:hessian-close-1-1},}  \\
&=& \frac{\langle U^\top \psi^p(\mu), \left[U^\top  \cH\left(\bar H(\mu), \mu^2 \bar H(\mu)\right) U \right]^{-1} U^\top \psi^p(\mu) \rangle}{(1-\bar \epsilon)^6  \mu^2},   \hfill \text{using $U^\top \psi^p(\mu)=-\frac{1}{\mu} r^0$,}   \\
&=&\frac{1}{(1-\bar \epsilon)^6  \mu^2}   \xi \vartheta, \hfill \text{using Lemma \ref{lem:corr-1}}.
\end{array}
\end{eqnarray}
\end{proof}
We want to control the change in $\Omega_\mu(x,\tau,y)$ by using Corollary \ref{coro:bound-prox-1}.
In view of this, by adding and subtracting some terms, we have (with $\mu^+:=\mu(x^+,\tau^+,y^+)$)
\begin{eqnarray} \label{eq:dd-43}
\ \ \ \ \ \ \
\left(\frac{\tau^+ y^+}{\mu^+}-  \Phi' \left (  u^+ \right)\right) - \left(\frac{\tau y}{\mu}-  \Phi' \left (  u \right)\right)= \left (\frac{\tau^+}{\mu^+} - \frac{\tau }{\mu}\right) y + \frac{\tau^+}{\mu^+} \alpha_2 d_y - \left( \Phi' \left (  u^+ \right) - \Phi' \left (  u \right)\right).
\end{eqnarray}
Let us give a bound on the local norm defined by $\Phi''$ for the three terms in \eqref{eq:dd-43}. Using Proposition \ref{prop:mu-increase}, we have
\begin{eqnarray} \label{eq:dd-44}
\ \ \  \left |\frac{\tau^+}{\mu^+} - \frac{\tau }{\mu}\right|= \left |\frac{\tau+\alpha_2 d_\tau}{\mu^+} - \frac{\tau }{\mu}\right|= \left|\frac{\alpha_2 \mu d_\tau-\tau (\mu^+-\mu)}{\mu\mu^+} \right| \leq \alpha_2 \left ( \left | \frac{d_\tau}{\tau} \right| + \left|\frac{1}{\mu(1-\bar \epsilon)^2} \right|\right) \frac{\tau}{\mu}.
\end{eqnarray}
$\Omega_\mu(x,\tau,y) \leq \delta_1$ and \eqref{eq:coro:bound-prox-1} imply that $\left\|\frac{\tau y}{\mu}-  \Phi' \left (  u \right)\right\|_{[\Phi''(u)]^{-1}} \leq \sigma(\delta_1)$, where $\sigma$ is defined in \eqref{eq:sigma-1}. Then, by using \eqref{eq:LF-2} and property \eqref{property-3} for $\Phi_*$, assuming $\sigma(\delta_1) < 1$  we have
\begin{eqnarray} \label{eq:dd-45}
[\Phi''(u)]^{-1}= \Phi''_*(\Phi'(u)) \preceq \frac{1}{(1-\sigma(\delta_1))^2} \Phi_*''\left ( \frac{\tau y}{\mu}\right).
\end{eqnarray}
Using \eqref{eq:dd-44} and \eqref{eq:dd-45}, we can bound the local norm of the first term in the RHS of \eqref{eq:dd-43} as	
\begin{eqnarray} \label{eq:dd-46}
\begin{array}{rclr} 
 \left |\frac{\tau^+}{\mu^+} - \frac{\tau }{\mu}\right| \|y\|_{[\Phi''(u)]^{-1}} &\leq& \frac{\alpha_2}{1-\sigma(\delta_1)} \left ( \left | \frac{d_\tau}{\tau} \right| + \left|\frac{1}{\mu(1-\bar \epsilon)^2} \right|\right) \left\| \frac{\tau}{\mu}y \right\|_{\Phi_*''} & \\
 &\leq&  \left ( \left | \frac{d_\tau}{\tau} \right| + \left|\frac{1}{\mu(1-\bar \epsilon)^2} \right|\right)  \frac{\alpha_2}{1-\sigma(\delta_1)} \sqrt{\vartheta},  &  \text{using \eqref{eq:norm-phi*-1}}.
 \end{array}
\end{eqnarray}
For the second term in the RHS of \eqref{eq:dd-43} we have
\begin{eqnarray} \label{eq:dd-47}
\begin{array}{rclr}
\frac{\tau^+}{\mu^+} \alpha_2  \| d_y \|_{[\Phi''(u)]^{-1}} &\leq& \left[ 1+ \alpha_2 \left ( \left | \frac{d_\tau}{\tau} \right| + \left|\frac{1}{\mu(1-\bar \epsilon)^2} \right|\right)\right] \alpha_2 \left\| \frac{\tau}{\mu} d_y \right\|_{[\Phi''(u)]^{-1}},  & \text{using \eqref{eq:dd-44},}  \\  &\leq& \left[ 1+ \alpha_2 \left ( \left | \frac{d_\tau}{\tau} \right| + \left|\frac{1}{\mu(1-\bar \epsilon)^2} \right|\right)\right] \alpha_2 \sqrt{q}, & \text{using \eqref{eq:dd-expand-5}}.
\end{array}
\end{eqnarray}
For the third term, first by using \eqref{eq:dd-expand-4} and substituting the bound in \eqref{eq:dd-expand-3} we have
\begin{eqnarray} \label{eq:dd-48}
\left \|u^+-u \right \|_{\Phi''}\leq \underbrace{\frac{1}{1+\alpha_2 (d_\tau/\tau)}  \left(1 +  \sqrt{\frac{1}{\xi-1}}\right) \alpha_2 \sqrt{q}}_{=:\bar \delta}.
\end{eqnarray}
If we choose $\alpha_2$ such that $\bar \delta < 1$, then,  by Lemma \ref{lem:der-to-real}, we have
\begin{eqnarray} \label{eq:dd-49}
\left\| \Phi' \left (  u^+ \right) - \Phi' \left (  u \right)\right\|_{[\Phi''(u)]^{-1}}  \leq \frac{\bar \delta}{1-\bar \delta}.
\end{eqnarray}
Putting together the above bounds, we can prove the following main result:
\begin{proposition} \label{prop:dd-4}
Assume that  $0.2 > \delta_2 > 4 \delta_1 > 0$ and for a point $(x,\tau,y) \in Q_{DD}$ we have $\Omega_\mu(x,\tau,y) \leq \delta_1$. Let the predictor step be calculated from \eqref{eq:system-pred-1} with $r_{RHS}=r^0/\mu^2$ and any $\hat H$ that satisfies \eqref{eq:lem:hessian-close-1-2}. Then, there exists a positive constant $\kappa_1$ depending on $\delta_1$, $\delta_2$, and $\xi$  such that we can choose $\alpha_2$ large enough to satisfy 
\begin{eqnarray} \label{eq:dd-16}
\alpha_2 \geq \frac{\kappa_1}{\sqrt{ \vartheta}} \mu,
\end{eqnarray}
and $\alpha_1:=\frac{\alpha_2}{\tau+\alpha_2 d_\tau}$ for the update of \eqref{eq:update} while $\Omega_\mu(x^+,\tau^+,y^+) \leq \delta_2$.
\end{proposition}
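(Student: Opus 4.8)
The plan is to control $\Omega_\mu(x^+,\tau^+,y^+)$ via Corollary \ref{coro:bound-prox-1}, which bounds it (from above) by $\rho\left(-\left\|\frac{\tau^+y^+}{\mu^+}-\Phi'(u^+)\right\|_{[\Phi''(u^+)]^{-1}}\right)$. Since $\rho$ is increasing on $(-1,0]$ in the sense relevant here, it suffices to bound the local norm $\left\|\frac{\tau^+y^+}{\mu^+}-\Phi'(u^+)\right\|_{[\Phi''(u^+)]^{-1}}$ by $\sigma(\delta_2)$ (the inverse of $\rho$ from \eqref{eq:sigma-1}), and then invoke $\Omega_\mu(x^+,\tau^+,y^+)\le\rho(-\sigma(\delta_2))\le\delta_2$. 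The decomposition \eqref{eq:dd-43} splits the displacement $\left(\frac{\tau^+y^+}{\mu^+}-\Phi'(u^+)\right)-\left(\frac{\tau y}{\mu}-\Phi'(u)\right)$ into three pieces, and the bounds \eqref{eq:dd-46}, \eqref{eq:dd-47}, \eqref{eq:dd-49} already estimate each piece in the $[\Phi''(u)]^{-1}$-norm. The starting point satisfies $\left\|\frac{\tau y}{\mu}-\Phi'(u)\right\|_{[\Phi''(u)]^{-1}}\le\sigma(\delta_1)$ by $\Omega_\mu(x,\tau,y)\le\delta_1$ and Corollary \ref{coro:bound-prox-1}.

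First I would fix $\bar\epsilon\in(0,1)$ small (to be pinned down as a function of $\delta_1,\delta_2,\xi$) and choose $\epsilon$ from Lemma \ref{lem:hessian-close-1} so that \eqref{eq:lem:hessian-close-1-1}--\eqref{eq:lem:hessian-close-1-2} hold on the $\epsilon$-neighborhood; since $\delta_1\le\epsilon$ can be arranged (shrinking $\delta_1$ if necessary, or rather noting $\delta_1$ is an input and we only need the inequalities for the right $\bar\epsilon$), the hypotheses of Proposition \ref{prop:mu-increase} and Lemma \ref{lem:ana-1} are in force. From Lemma \ref{lem:ana-1} we get $q=\frac{1}{(1-\bar\epsilon)^6}\frac{\xi\vartheta}{\mu^2}$, hence $\sqrt q=\frac{1}{(1-\bar\epsilon)^3}\frac{\sqrt{\xi\vartheta}}{\mu}$, and from \eqref{eq:dd-expand-2}, $\left|\frac{d_\tau}{\tau}\right|\le\sqrt{\frac{q}{(\xi-1)\vartheta}}=\frac{1}{(1-\bar\epsilon)^3}\sqrt{\frac{\xi}{\xi-1}}\,\frac1\mu$. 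The key observation is that every occurrence of $\alpha_2$ in \eqref{eq:dd-46}--\eqref{eq:dd-49} appears multiplied by a quantity of order $\sqrt q$ or $\left|\frac{d_\tau}{\tau}\right|$ or $\frac1\mu$ — all of which are $\le\frac{C(\xi,\bar\epsilon)}{\mu}$ times (at most) $\sqrt\vartheta$. So writing $\alpha_2=\frac{\kappa_1}{\sqrt\vartheta}\mu$, each of the three bounds becomes a constant depending only on $\kappa_1,\xi,\bar\epsilon,\delta_1$ (the $\sqrt\vartheta$ cancels against the $\frac1\mu$ once we track that $\sqrt q\cdot\alpha_2 = O(\kappa_1)$, $\left|\frac{d_\tau}{\tau}\right|\alpha_2=O(\kappa_1/\sqrt\vartheta)\le O(\kappa_1)$ using $\vartheta\ge1$, etc.), and in fact goes to $0$ as $\kappa_1\to0$. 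Summing the three and adding $\sigma(\delta_1)$, the triangle inequality in the (fixed) $[\Phi''(u)]^{-1}$-norm gives $\left\|\frac{\tau^+y^+}{\mu^+}-\Phi'(u^+)\right\|_{[\Phi''(u)]^{-1}}\le\sigma(\delta_1)+g(\kappa_1)$ where $g(\kappa_1)\to0$.

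There is one technical wrinkle: Corollary \ref{coro:bound-prox-1} needs the norm measured by $[\Phi''(u^+)]^{-1}$, not $[\Phi''(u)]^{-1}$. I would handle this exactly as in \eqref{eq:dd-45}: since $\|u^+-u\|_{\Phi''(u)}\le\bar\delta<1$ (for $\kappa_1$ small, by \eqref{eq:dd-48}), the Dikin-ellipsoid/self-concordance estimate \eqref{property-3} gives $\Phi''(u^+)\succeq(1-\bar\delta)^2\Phi''(u)$, hence $[\Phi''(u^+)]^{-1}\preceq\frac{1}{(1-\bar\delta)^2}[\Phi''(u)]^{-1}$, inflating the bound by a harmless factor $\frac{1}{1-\bar\delta}$. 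Now choose $\bar\epsilon$ and then $\kappa_1>0$ small enough (depending only on $\delta_1,\delta_2,\xi$) that $\frac{1}{1-\bar\delta}\bigl(\sigma(\delta_1)+g(\kappa_1)\bigr)\le\sigma(\delta_2)$; this is possible because $\delta_2>4\delta_1$ forces $\sigma(\delta_2)>\sigma(\delta_1)$ with room to spare (using $\sigma$ increasing and the quadratic-like behavior of $\rho$ near $0$: $\rho(t)\approx t^2/2$, so $\sigma(4\delta_1)\approx 2\sigma(\delta_1)$, leaving a definite gap), and $g(\kappa_1),\bar\delta\to0$. Applying Corollary \ref{coro:bound-prox-1} and $\rho(-\sigma(\delta_2))\le\delta_2$ completes the argument, and since we only ever needed $\alpha_2\ge\frac{\kappa_1}{\sqrt\vartheta}\mu$ (larger $\alpha_2$ only helps $\mu$ grow, and the feasibility constraint $\tau+\alpha_2d_\tau>0$ together with $\bar\delta<1$ is maintained throughout the range), \eqref{eq:dd-16} holds.

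The main obstacle I anticipate is bookkeeping the $\vartheta$-dependence cleanly: one must verify that the product $\alpha_2\sqrt q$ is genuinely $O(1)$ in $\vartheta$ (it is: $\alpha_2\sqrt q=\frac{\kappa_1}{\sqrt\vartheta}\mu\cdot\frac{\sqrt{\xi\vartheta}}{(1-\bar\epsilon)^3\mu}=\frac{\kappa_1\sqrt\xi}{(1-\bar\epsilon)^3}$), and that the $\sqrt\vartheta$ appearing explicitly in \eqref{eq:dd-46} is tamed by the factor $\alpha_2\cdot\frac1\mu=\frac{\kappa_1}{\sqrt\vartheta}$ in front of it and by $\alpha_2|d_\tau/\tau|=O(\kappa_1/\sqrt\vartheta)$. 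Everything else is a sequence of triangle inequalities and monotonicity of $\rho,\sigma$; the quantitative choice $0.2>\delta_2>4\delta_1$ is precisely what guarantees the final slack.
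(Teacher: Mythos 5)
Your proposal follows the paper's own proof in essentially every structural respect: the same decomposition \eqref{eq:dd-43}, the same three bounds \eqref{eq:dd-46}, \eqref{eq:dd-47}, \eqref{eq:dd-49}, the same Hessian-comparison inflation of the local norm via \eqref{property-3} after forcing $\bar\delta$ small, and the same scaling $\alpha_2=\frac{\kappa_1}{\sqrt\vartheta}\mu$ that makes $\alpha_2\sqrt q$, $\alpha_2|d_\tau/\tau|$ and $\alpha_2/\mu$ all $O(\kappa_1)$ independently of $\vartheta$. However, there is one step that fails as written: you aim to bound the new local norm by $\sigma(\delta_2)$ and then conclude via ``$\Omega_{\mu^+}(x^+,\tau^+,y^+)\le\rho(-\sigma(\delta_2))\le\delta_2$''. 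That last inequality is false: since $\rho(-t)=\tfrac{t^2}{2}+\tfrac{t^3}{3}+\cdots>\rho(t)$ for $t\in(0,1)$, one has $\rho(-\sigma(\delta_2))>\rho(\sigma(\delta_2))=\delta_2$. Because the upper estimate in Corollary \ref{coro:bound-prox-1} is $\rho(-\|\cdot\|)$, the correct target is $\bar\sigma(\delta_2)$, the inverse of $t\mapsto\rho(-t)$ (as in the paper's proof), and $\bar\sigma(\delta_2)<\sigma(\delta_2)$, so your threshold is too generous.

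This matters because the place where the numerical hypothesis $0.2>\delta_2>4\delta_1>0$ actually works is precisely in verifying that the corrected target leaves room, i.e.\ that $\tfrac34\bar\sigma(\delta_2)-\sigma(\delta_1)>0$ (the $\tfrac34$ being the reciprocal of your inflation factor $\tfrac{1}{1-\bar\delta}\le\tfrac43$). Your heuristic ``$\rho(t)\approx t^2/2$, so $\sigma(4\delta_1)\approx2\sigma(\delta_1)$, leaving a definite gap'' is stated for $\sigma$, not $\bar\sigma$, and does not by itself account for the asymmetry $\bar\sigma<\sigma$ together with the $\tfrac34$ factor; the paper closes this by checking $\delta_2\ge\rho\left(-\tfrac43\left(\sqrt{\delta_2/2}+\delta_2/4\right)\right)$ for $\delta_2\in(0,0.2)$ and using $\sigma(\delta_1)\le\sqrt{2\delta_1}+\delta_1$. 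With $\sigma(\delta_2)$ replaced by $\bar\sigma(\delta_2)$ and this quantitative check supplied, the rest of your argument goes through exactly as in the paper, with $\kappa_1$ taken as the minimum of the constant forcing $\bar\delta\le\tfrac14$ and the constant forcing the displacement bound below $\tfrac34\bar\sigma(\delta_2)-\sigma(\delta_1)$.
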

\begin{proof}
We choose $\alpha_2$ to make sure that  $\bar \delta$ defined in \eqref{eq:dd-48} satisfies $\bar \delta \leq 1/4$. To achieve this, we first assume that $\alpha_2 |d_\tau/\tau| \leq 1/2$, and then in view of \eqref{eq:dd-48} we choose $2(1+1/\sqrt{\xi-1})\alpha_2 \sqrt{q} \leq 1/4$. If we substitute the value of $q=\frac{1}{(1-\bar \epsilon)^6}  \frac{\xi \vartheta}{\mu^2}$ defined in Lemma \ref{lem:ana-1} and also use the bound in \eqref{eq:dd-expand-2}, the following inequality guarantees $\bar \delta \leq 1/4$:
\begin{eqnarray} \label{eq:extra-1}
\frac{\alpha_2 \sqrt{\vartheta}}{\mu} \leq \underbrace{\min \left\{\sqrt{\frac{\xi-1}{\xi}}\frac{(1-\bar \epsilon)^3}{2}, \frac{1}{8(1+1/\sqrt{\xi-1})\sqrt{\xi}} (1-\bar \epsilon)^3 \right\}}_{=:\kappa_{1,1}}.
\end{eqnarray}
Consider the bound we have for the proximity measure in Corollary \ref{coro:bound-prox-1}. Assuming that $\bar \delta$ defined in \eqref{eq:dd-48} satisfies $\bar \delta \leq 1/4$, by using property \eqref{property-3}, we have 
\begin{eqnarray} \label{eq:dd-50}
\begin{array}{rcl}
\left \| \frac{\tau^+ y^+}{\mu^+}-  \Phi' \left (  u^+ \right)  \right \|^*_{\Phi''(u^+)} 
&\leq &\frac{4}{3} \left \| \frac{\tau^+ y^+}{\mu^+}-  \Phi' \left (  u^+ \right)  \right \|^*_{\Phi''(u)} \\
&\leq&\frac{4}{3} \left \| \frac{\tau^+ y^+}{\mu^+}-  \Phi' \left (  u^+ \right) - \frac{\tau y}{\mu}+  \Phi' \left (  u \right) \right \|^*_{\Phi''(u)} +\frac{4}{3} \left \| \frac{\tau y}{\mu}-  \Phi' \left (  u \right) \right \|^*_{\Phi''(u)}   \\
&\leq&  \frac{4}{3}\left \| \frac{\tau^+ y^+}{\mu^+}-  \Phi' \left (  u^+ \right) - \frac{\tau y}{\mu}+  \Phi' \left (  u \right) \right \|^*_{\Phi''(u)}  + \frac{4}{3}\sigma (\delta_1),
\end{array}
\end{eqnarray}
where $\sigma(\cdot)$ is the inverse of $\rho(\cdot)$ defined in \eqref{eq:sigma-1}. Similarly, we define the inverse of $\rho(-\cdot)$ as $\bar \sigma(\cdot)$. To satisfy $\Omega_\mu(x^+,\tau^+,y^+) \leq \delta_2$, in view of Corollary \ref{coro:bound-prox-1} and using \eqref{eq:dd-50}, a sufficient condition is
\begin{eqnarray} \label{eq:dd-50-2}
\left \| \frac{\tau^+ y^+}{\mu^+}-  \Phi' \left (  u^+ \right) - \frac{\tau y}{\mu}+  \Phi' \left (  u \right) \right \|_{[\Phi''(u)]^{-1}}  \leq  \frac{3}{4}\bar \sigma (\delta_2)-\sigma (\delta_1).
\end{eqnarray}
For this analysis, we need to choose $\delta_1$ and $\delta_2$ such that $\frac{3}{4} \bar \sigma(\delta_2) >  \sigma(\delta_1)$. To force this, we choose $0.2 > \delta_2 > 4 \delta_1$; we can check that $\delta_2\geq \rho(-\frac{4}{3}(\sqrt{\delta_2/2}+\delta_2/4))$ for $\delta_2 \in (0,0.2)$, then we apply $\bar \sigma$ to both sides and use $\sigma(\delta_1)\leq \sqrt{2\delta_1}+\delta_1$ by \cite{cone-free}-Lemma 2.1. We have split the term inside the norm in the LHS of \eqref{eq:dd-50-2} into three terms in \eqref{eq:dd-43} and bounded the local norm for each of them. 
We add the bounds in \eqref{eq:dd-46}, \eqref{eq:dd-47}, and \eqref{eq:dd-49}. Then, by substituting $q=\frac{1}{(1-\bar \epsilon)^6}  \frac{\xi \vartheta}{\mu^2}$ and the bound in  \eqref{eq:dd-expand-2}, and considering $\bar \delta \leq 1/4$ and $\alpha_2 |d_\tau/\tau| \leq 1/2$, we can bound the LHS of \eqref{eq:dd-50-2} from above by 
\begin{eqnarray} \label{eq:dd-50-3}
\begin{array}{rcl}
\underbrace{\left[ \frac{\left ( \sqrt{\frac{\xi}{\xi-1}} \frac{1}{(1-\bar \epsilon)^3}+ \frac{1}{(1-\bar \epsilon)^2} \right)}{1-\sigma(\delta_1)} + 2 \frac{\sqrt{\xi}}{(1-\bar \epsilon)^3 }+\frac{8}{3} \left(1+\sqrt{\frac{1}{\xi-1}} \right) \frac{\sqrt{\xi }}{(1-\bar \epsilon)^3}\right]}_{=:1/\kappa_{1,2}}\frac{\alpha_2 \sqrt{\vartheta}}{\mu}. 
\end{array}
\end{eqnarray}
Note that for \eqref{eq:dd-47}, the term inside the bracket is bounded from above by 2  using the fact that we force \eqref{eq:dd-46} to be smaller than 1. Therefore, if we choose
\[
\frac{\alpha_2 \sqrt{\vartheta}}{\mu}=\kappa_1:=\min \left\{ \kappa_{1,1}, \kappa_{1,2} \left(\frac{3}{4}\bar \sigma (\delta_2)-\sigma (\delta_1)\right)\right\},
\]
then $\Omega_\mu(x^+,\tau^+,y^+) \leq \delta_2$ holds, which concludes the proof.  
\end{proof}
To complete the whole discussion, we need to prove Lemma \ref{lem:hessian-close-1}. Let us start with the following lemma: 
\begin{lemma} \label{lem:dd-8}
 For every set of points $(z,\tau,y,y_\tau,\mu)$ such that $u:=\frac{z}{\tau} \in D$, $y\in D_*$, $\mu >0$, and $y_\tau+\frac{1}{\tau} \langle y,z \rangle +\frac{\mu \xi \vartheta}{\tau}=0$, we have
\begin{eqnarray}  \label{eq:lem:dd-8}
\begin{array}{c}
	\left \|\frac{\tau y}{\mu}-  \Phi' \left (  u \right) \right\|_{[\Phi''(u)]^{-1}}   \leq \beta \leq \sqrt{\frac{\xi}{\xi-1}} \left \|\frac{\tau y}{\mu}-  \Phi' \left (  u \right) \right\|_{[\Phi''(u)]^{-1}},
\end{array}
\end{eqnarray}
where
\begin{eqnarray}  \label{eq:dd-17-2}
\begin{array}{c}
\beta (z,\tau,y,y_\tau,\mu) : =    \left \| \frac{1}{\mu} \left[ \begin{array}{c} y \\ y_\tau \end{array}\right] -  \left[\begin{array}{c} \frac{1}{\tau} \Phi'(u) \\ -\frac{1}{\tau} \langle \Phi'(u),u \rangle - \frac{ \xi \vartheta}{\tau}\end{array}  \right]  \right \|_{[\bar H(u,\tau)]^{-1}},
\end{array}
\end{eqnarray}
for $\bar H(u,\tau)$ defined in \eqref{hessian-phi-2} as a function of $u$ and $\tau$. 
\end{lemma}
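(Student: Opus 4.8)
The plan is to compute $\beta(z,\tau,y,y_\tau,\mu)^2$ explicitly using the inverse-of-block-matrix formula \eqref{inverse-formula-2-2}, and then show that, under the constraint $y_\tau + \frac{1}{\tau}\langle y,z\rangle + \frac{\mu\xi\vartheta}{\tau} = 0$, the quadratic-form piece coming from the $\tau$-direction simplifies nicely — in fact it becomes comparable (up to the factor $\xi/(\xi-1)$) to the purely primal piece $\|\frac{\tau y}{\mu}-\Phi'(u)\|_{[\Phi''(u)]^{-1}}^2$. Concretely, write $w := \frac{1}{\mu}y - \frac{1}{\tau}\Phi'(u)$ for the "primal block" of the difference vector in \eqref{eq:dd-17-2}, and let $w_\tau$ be its $\tau$-component. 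Using \eqref{inverse-formula-2-2} with $H = \frac{1}{\tau^2}\Phi''(u)$, $h = -\frac{1}{\tau^2}\Phi''(u)u - \frac{1}{\tau^2}\Phi'(u)$, $\zeta$ as in \eqref{hessian-phi-2}, and $\eta = \frac{\tau^2}{\xi\vartheta - \langle\Phi',[\Phi'']^{-1}\Phi'\rangle}$, we get
\begin{eqnarray*}
\beta^2 = \langle w, H^{-1}w\rangle + \eta\left(\langle w, H^{-1}h\rangle - w_\tau\right)^2.
\end{eqnarray*}
The first term is exactly $\tau^2\|w\|_{[\Phi''(u)]^{-1}}^2 = \|\frac{\tau y}{\mu} - \Phi'(u)\|_{[\Phi''(u)]^{-1}}^2$, which gives the left inequality in \eqref{eq:lem:dd-8} immediately since $\eta \geq 0$.

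For the right inequality, the key step is to show that $\langle w, H^{-1}h\rangle - w_\tau$ can be re-expressed in terms of $\langle \frac{\tau y}{\mu} - \Phi'(u), u\rangle$ (a scalar inner product) once the linear constraint on $y_\tau$ is used. First I would substitute $H^{-1}h = -u - [\Phi'']^{-1}\Phi'$ from \eqref{inverse-formula-2-2} to get
$\langle w, H^{-1}h\rangle = -\langle w, u\rangle - \langle w, [\Phi'']^{-1}\Phi'\rangle$; with $w = \frac{1}{\mu}y - \frac{1}{\tau}\Phi'(u)$ this unpacks into terms involving $\frac{1}{\mu}\langle y,u\rangle$, $\frac{1}{\tau}\langle\Phi'(u),u\rangle$, $\frac{1}{\mu}\langle y,[\Phi'']^{-1}\Phi'\rangle$, and $\frac{1}{\tau}\langle\Phi',[\Phi'']^{-1}\Phi'\rangle$. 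Then I would plug in $w_\tau = \frac{1}{\mu}y_\tau + \frac{1}{\tau}\langle\Phi'(u),u\rangle + \frac{\xi\vartheta}{\tau}$ (the $\tau$-component of the difference in \eqref{eq:dd-17-2}) and use the hypothesis $y_\tau = -\frac{1}{\tau}\langle y,z\rangle - \frac{\mu\xi\vartheta}{\tau} = -\frac{1}{\tau}\langle y, \tau u\rangle - \frac{\mu\xi\vartheta}{\tau} = -\langle y,u\rangle - \frac{\mu\xi\vartheta}{\tau}$, so that $\frac{1}{\mu}y_\tau = -\frac{1}{\mu}\langle y,u\rangle - \frac{\xi\vartheta}{\tau}$. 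The $\frac{\xi\vartheta}{\tau}$ terms and the $\langle\Phi'(u),u\rangle$ terms should cancel against pieces of $\langle w, H^{-1}h\rangle$, leaving $\langle w,H^{-1}h\rangle - w_\tau = -\langle w, [\Phi'']^{-1}\Phi'\rangle + \frac{1}{\tau}\langle\Phi',[\Phi'']^{-1}\Phi'\rangle = -\langle \frac{1}{\mu}y - \frac{1}{\tau}\Phi'(u), [\Phi'']^{-1}\Phi'\rangle + \frac{1}{\tau}\langle\Phi',[\Phi'']^{-1}\Phi'\rangle = -\langle \frac{1}{\mu}y, [\Phi'']^{-1}\Phi'\rangle + \frac{2}{\tau}\langle\Phi',[\Phi'']^{-1}\Phi'\rangle$; I would then want to recognize this as $-\frac{1}{\tau}\langle \frac{\tau y}{\mu} - \Phi'(u), [\Phi'']^{-1}\Phi'\rangle + \frac{1}{\tau}\langle\Phi',[\Phi'']^{-1}\Phi'\rangle$ and bound it via Cauchy–Schwarz \eqref{eq:CS} in the $[\Phi'']^{-1}$ norm: $|\langle \frac{\tau y}{\mu} - \Phi'(u),[\Phi'']^{-1}\Phi'\rangle| \le \|\frac{\tau y}{\mu} - \Phi'(u)\|_{[\Phi'']^{-1}} \cdot \|\Phi'\|_{[\Phi'']^{-1}}$, using $\|\Phi'\|_{[\Phi'']^{-1}}^2 = \langle\Phi',[\Phi'']^{-1}\Phi'\rangle \le \vartheta$ from \eqref{property-2-2}.

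Putting this together, writing $N := \|\frac{\tau y}{\mu} - \Phi'(u)\|_{[\Phi''(u)]^{-1}}$ and $\theta := \langle\Phi',[\Phi'']^{-1}\Phi'\rangle \le \vartheta$, the bound takes the shape $\beta^2 \le N^2 + \eta \cdot \frac{1}{\tau^2}\left(\text{something bounded by } N\sqrt{\theta} \text{ plus a } \theta \text{ term}\right)^2$, and with $\eta = \frac{\tau^2}{\xi\vartheta - \theta}$ the factors of $\tau$ cancel. The arithmetic I expect to need is that the cross-term and the $\theta^2/\tau^2$ term conspire so that $\eta\tau^{-2}(\cdots)^2 \le \frac{1}{\xi - 1}N^2$, giving $\beta^2 \le \frac{\xi}{\xi-1}N^2$. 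The main obstacle I anticipate is not conceptual but bookkeeping: carefully tracking the cancellations in $\langle w, H^{-1}h\rangle - w_\tau$ so that exactly the right scalar survives, and then verifying the final inequality $\eta\tau^{-2}(aN\sqrt\theta + b\theta)^2 \le \frac{1}{\xi-1}N^2$ — this likely uses $\theta \le \vartheta \le \xi\vartheta$ and possibly that the expression inside is in fact $\le N\sqrt\theta$ on the nose (not just up to a constant), in which case it reduces to $\frac{\theta}{\xi\vartheta - \theta} \le \frac{1}{\xi - 1}$, which is equivalent to $\theta \le \vartheta$. So the clean statement I'd aim to prove is that $\langle w,H^{-1}h\rangle - w_\tau = -\frac{1}{\tau}\langle\frac{\tau y}{\mu} - \Phi'(u),[\Phi'']^{-1}\Phi'\rangle$ exactly (with the $\frac{1}{\tau}\langle\Phi',[\Phi'']^{-1}\Phi'\rangle$ terms also cancelling, which they may once the $\langle y,u\rangle$ pieces are recombined), after which the CS bound and $\theta \le \vartheta$ finish it in one line. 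I would double-check the cancellation of the $\langle y, u\rangle$ and $\langle \Phi'(u),u\rangle$ contributions first, since that is where a sign error would do the most damage.
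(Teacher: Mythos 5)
Your proposal follows essentially the paper's own proof: compute $\beta^2$ via the block-inverse formula \eqref{inverse-formula-2-2}, get the lower bound from $\eta \geq 0$, and reduce the upper bound to bounding the cross term by Cauchy--Schwarz together with $\langle \Phi',[\Phi'']^{-1}\Phi'\rangle \leq \vartheta$. The cancellation you flagged for double-checking does come out exactly as your ``clean statement'' predicts: the hypothesis gives $w_\tau = -\langle w,u\rangle$, so $\langle w,H^{-1}h\rangle - w_\tau = -\langle w,[\Phi'']^{-1}\Phi'\rangle = -\frac{1}{\tau}\langle \frac{\tau y}{\mu}-\Phi'(u),[\Phi'']^{-1}\Phi'\rangle$ with no leftover $\frac{1}{\tau}\langle\Phi',[\Phi'']^{-1}\Phi'\rangle$ term (that extra term in your intermediate display is a bookkeeping slip), after which $\beta^2 \leq \frac{\xi}{\xi-1}\left\|\frac{\tau y}{\mu}-\Phi'(u)\right\|^2_{[\Phi''(u)]^{-1}}$ follows exactly as in the paper.
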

\begin{proof}
Consider the definition of $\bar H$ in \eqref{hessian-phi-2} and the formula for its inverse in \eqref{inverse-formula-2-2}.  
We want to substitute $w:=\frac{y}{\mu}- \frac{1}{\tau} \Phi' \left (u \right)$ and $w_\tau:=\frac{y_\tau}{\mu} +\frac{1}{\tau} \langle \Phi'(u),u\rangle + \frac{  \xi \vartheta}{\tau}$ in \eqref{inverse-formula-2-2}. Note that by using the hypothesis of the lemma, we have
\begin{eqnarray*}
w_\tau=\frac{y_\tau}{\mu} +\frac{1}{\tau} \langle \Phi'(u),u\rangle + \frac{  \xi \vartheta}{\tau}=-\langle \frac{y}{\mu}-\frac{1}{\tau}\Phi', \frac{z}{\tau} \rangle. 
\end{eqnarray*}
Hence, by substituting this formula for $w_\tau$ and also $w$ in \eqref{inverse-formula-2-2}, we get
\begin{eqnarray}  \label{eq:dd-18}
\begin{array}{rcl}
 \ifdetails \langle w,H^{-1}h\rangle - w_\tau &=& \langle \frac{y}{\mu}- \frac{1}{\tau} \Phi' \left (u \right), -u-[\Phi'']^{-1}\Phi' \rangle+\langle \frac{y}{\mu}-\frac{1}{\tau}\Phi', \frac{z}{\tau} \rangle \\ \fi
\beta^2 &=&  \left\|\frac{\tau y}{\mu}- \Phi' \left (  u \right)\right\|^2_{[\Phi''(u)]^{-1}}+  \frac{\left[ \langle \frac{\tau y}{\mu} - \Phi', [\Phi'']^{-1} \Phi' \rangle \right] ^2}{\xi \vartheta  -\langle \Phi', [\Phi'']^{-1} \Phi' \rangle}   \\
	&\leq & \left\|\frac{\tau y}{\mu}- \Phi' \left (  u \right)\right\|^2_{[\Phi''(u)]^{-1}} + \frac{\left\|\frac{\tau y}{\mu}- \Phi' \left (  u \right)\right\|^2_{[\Phi''(u)]^{-1}}  \vartheta}{(\xi-1) \vartheta} 
	 =  \frac{\xi}{\xi-1} \left\|\frac{\tau y}{\mu}-  \Phi' \left (  u \right)\right\|^2_{[\Phi''(u)]^{-1}},
	 \end{array}
\end{eqnarray}
	where for the inequality we used CS inequality and property \eqref{property-2-2} of $\vartheta$-s.c.\ barriers. \eqref{eq:dd-18} immediately gives us \eqref{eq:lem:dd-8}. 
\end{proof}

\begin{proof}[Proof of Lemma \ref{lem:hessian-close-1}]
Assume that $\Omega_\mu(x,\tau,y) \leq \epsilon < 1$, by Corollary \ref{coro:bound-prox-1}, we have
	\[
	\rho \left (\left \|\frac{\tau y}{\mu}-  \Phi' \left (  u \right) \right\|_{[\Phi''(u)]^{-1}} \right)  \leq \epsilon  \ \Rightarrow \ \left \|\frac{\tau y}{\mu}-  \Phi' \left (  u \right) \right\|_{[\Phi''(u)]^{-1}}  \leq \sigma (\epsilon), 
	\]
where $\sigma(\cdot)$, defined in \eqref{eq:sigma-1}, is the inverse of $\rho(\cdot)$ for nonnegative values. 
If we define $(z,\tau):=(\tau Ax+z^0,\tau)$ and $\frac{1}{\mu} (y,y_\tau):=\frac{1}{\mu} (y,y_{\tau,0}+\tau \langle c,x \rangle)$, the hypotheses of Lemma \ref{lem:dd-8} are satisfied. Then, we have $\beta \leq \sqrt{\frac{\xi}{\xi-1}} \sigma(\epsilon)$. In Remark \ref{rem-1}, we mentioned that $\bar H(x,\tau)$, with some change of variables, is the Hessian of $f:=\Phi \left (\frac{z}{\tau} \right)-\xi \vartheta \ln(\tau)$, which we proved in Lemma \ref{lem:dd-9} that is a $\bar \xi$-s.c.\ function for an absolute constant $\bar \xi$ depending on $\xi$. 
We want to use Lemma \ref{lem:prox-LN} for $f$ and its conjugate at the points $(z,\tau)$ and $\frac{1}{\mu} (y,y_\tau)$, and the corresponding points with the same $\mu$ on the central path. 
One can verify that condition of Lemma \ref{lem:prox-LN}-(b) holds for these points, i.e., 
\begin{eqnarray}  \label{eq:lem:hessian-close-1-3}
&& \langle y - y(\mu) , z-z(\mu)  \rangle +  (y_\tau-y_\tau(\mu)) (\tau-\tau(\mu)) \nonumber \\
&=& \langle A^\top (y - y(\mu)), \tau x - \tau(\mu) x(\mu) \rangle + (\tau-\tau(\mu))  \langle c, \tau x - \tau(\mu) x(\mu) \rangle  \nonumber \\
&=&-(\tau-\tau(\mu))  \langle c, \tau x - \tau(\mu) x(\mu) \rangle+(\tau-\tau(\mu))  \langle c, \tau x - \tau(\mu) x(\mu) \rangle 
= 0. 
\end{eqnarray}
Note that the terms in the middle of both parts (a) and (b) of Lemma \ref{lem:prox-LN} are the same. If we use the upper bound from \eqref{eq:lem:prox-LN} and the lower bound from \eqref{eq:lem:prox-LN-2} and ignore one term in the LHS, we get
\begin{eqnarray} \label{eq:lem:hessian-close-1-4}
\begin{array}{rcl}
&& \rho \left (\frac{1}{\sqrt{\bar \xi}} \left(\bar H(x(\mu),\tau(\mu)) [z-z(\mu), \tau-\tau(\mu)]\right)^{1/2}  \right) \leq   \rho \left (-\frac{\beta}{\sqrt{\bar \xi}} \right)  \\
&\Rightarrow&  \left(\bar H(x(\mu),\tau(\mu)) [z-z(\mu), \tau-\tau(\mu)]\right)^{1/2}  \leq  \sqrt{\bar \xi}  \sigma \left( \rho \left (-\frac{1}{\sqrt{\bar \xi}} \sqrt{\frac{\xi}{\xi-1}} \sigma(\epsilon) \right)  \right). 
\end{array}
\end{eqnarray}
We have $\sigma(\epsilon) \leq \sqrt{2\epsilon}+\epsilon$ by \cite{cone-free}-Lemma 2.1, and for $\epsilon \leq 0.1$ we can easily verify that $\sqrt{2\epsilon}+\epsilon \leq \sqrt{3\epsilon}$. Also we can verify that for $t \leq 0.6$, we have $\rho(-t) \leq t^2$. Assume that $\sigma(\epsilon)$ is small enough to have $\sqrt{\frac{\xi}{\bar \xi(\xi-1)}} \sigma(\epsilon)  \leq 0.6$. Then, the RHS of \eqref{eq:lem:hessian-close-1-4} becomes
\begin{eqnarray} \label{eq:lem:hessian-close-1-4-2}
\leq \sqrt{\bar \xi}  \sigma \left( \frac{\xi}{\bar \xi(\xi-1)}\sigma^2(\epsilon)  \right)  \leq 3  \sqrt{\frac{\xi}{\xi-1}}  \sqrt{\epsilon}. 
\end{eqnarray}
Now we just need to use property  \eqref{property-3} of s.c.\ functions for $f=\Phi \left (\frac{z}{\tau} \right)-\xi \vartheta \ln(\tau)$ to get the result of the lemma.
\end{proof}
Before analyzing the corrector step, let us elaborate more on the above proof. 
For a point $(x,\tau,y) \in Q_{DD}$ with parameter $\mu$, let us define
\begin{eqnarray} \label{eq:lem:corr-ana-2}
d:= \left[ \begin{array}{c}   \tau(\mu) x(\mu)-\tau x  \\  \tau(\mu) -\tau \\  v(\mu)-v  \end{array} \right].
\end{eqnarray}
We can easily verify that  (using $y=y^0-(\tau-1) c_A - F^\top v$):
\begin{eqnarray} \label{eq:lem:corr-ana-3}
U d = \left [ \begin{array}{c}  \tau(\mu) Ax(\mu)+z^0 \\ \tau(\mu) \\ y(\mu) \\ y_{\tau,0}+\tau(\mu) \langle c,x(\mu) \rangle \end{array} \right]-\left [ \begin{array}{c}  \tau Ax+z^0 \\ \tau \\ y \\ y_{\tau,0}+\tau \langle c,x \rangle \end{array} \right].
\end{eqnarray}
We want to use property \eqref{property-3} for $r=1/4$ to change the local norm in \eqref{eq:lem:hessian-close-1-4}; it suffices to force $3  \sqrt{\frac{\xi}{\bar \xi(\xi-1)}} \sqrt{\epsilon} \leq \frac{1}{4}$ in view of \eqref{eq:lem:hessian-close-1-4-2}. Consider the proof of Lemma \ref{lem:hessian-close-1} and also the term for $y$ that we ignored in \eqref{eq:lem:hessian-close-1-4}. Then, using \eqref{eq:lem:hessian-close-1-4-2} and the above discussion, we have
\begin{corollary} \label{cor:bound-1} 
If for a point $(x,\tau,y)\in Q_{DD}$ we have  $3  \sqrt{\frac{\xi}{\bar \xi(\xi-1)}} \sqrt{\Omega_\mu(x,\tau,y)} \leq \frac{1}{4}$, then for $d$ defined in \eqref{eq:lem:corr-ana-2} we have
\begin{eqnarray}  \label{eq:corr-proof-1}
\| d\|_{U^\top \cH(\bar H(x,\tau), \mu^2 \bar H(x,\tau)) U} \leq 2\cdot \frac{4}{3} \left(3 \sqrt{\frac{\xi}{\xi-1}} \sqrt{\Omega_\mu(x,\tau,y)}\right) = \underbrace{8  \sqrt{\frac{\xi}{\xi-1}}}_{=:\bar \xi_1}  \sqrt{\Omega_\mu(x,\tau,y)}.
\end{eqnarray}
\end{corollary}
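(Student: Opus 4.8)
The plan is to re-run the proof of Lemma~\ref{lem:hessian-close-1}, but to retain the dual summand that was discarded there, and then to transport the two resulting local-norm estimates from the central-path reference point to the current point. Write $z:=\tau Ax+z^0$, $y_\tau:=y_{\tau,0}+\tau\langle c,x\rangle$, and let $f$ be the $\bar\xi$-s.c.\ function $\Phi(z/\tau)-\xi\vartheta\ln\tau$ of Remark~\ref{rem-1} (self-concordant by Lemma~\ref{lem:dd-9}); up to the change of variables its Hessian at $(x,\tau)$ is $\bar H(x,\tau)$, and by \eqref{eq:LF-2} the Hessian of $f_*$ at $f'(x,\tau)$ is $[\bar H(x,\tau)]^{-1}$. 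On the central path the dual coordinate equals $f'$ of the primal coordinate, so the dual central-path point is $f'(x(\mu),\tau(\mu))$ and the Hessian of $f_*$ there is $[\bar H(x(\mu),\tau(\mu))]^{-1}$. Using the block-diagonal form of $\cH$ in \eqref{eq:system-pred-1} with $\hat H=\mu^2\bar H(x,\tau)$, and identifying the two blocks of $Ud$ via \eqref{eq:lem:corr-ana-3} as $f_1:=(z(\mu)-z,\tau(\mu)-\tau)$ and $f_2:=(y(\mu)-y,y_\tau(\mu)-y_\tau)$, one has
\begin{eqnarray*}
\|d\|^2_{U^\top\cH(\bar H(x,\tau),\mu^2\bar H(x,\tau))U}=\|f_1\|^2_{\bar H(x,\tau)}+\big\|\tfrac1\mu f_2\big\|^2_{[\bar H(x,\tau)]^{-1}},
\end{eqnarray*}
so it suffices to bound each of these two pieces.

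Next I would apply Lemma~\ref{lem:prox-LN}(b) to $f$ and $f_*$ at the current point and the central-path point; the orthogonality hypothesis is the identity \eqref{eq:lem:hessian-close-1-3} already verified in the proof of Lemma~\ref{lem:hessian-close-1}. Combining the lower bound \eqref{eq:lem:prox-LN-2}, this time keeping \emph{both} $\bar\xi\rho(\cdot)$ terms, with the upper bound \eqref{eq:lem:prox-LN} and the estimate $\beta\le\sqrt{\xi/(\xi-1)}\,\sigma(\Omega_\mu(x,\tau,y))$ supplied by Lemma~\ref{lem:dd-8} together with Corollary~\ref{coro:bound-prox-1}, exactly as in \eqref{eq:lem:hessian-close-1-4}, each of the two nonnegative summands is at most $\bar\xi\rho(-\beta/\sqrt{\bar\xi})$. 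Running the same computation as in \eqref{eq:lem:hessian-close-1-4}--\eqref{eq:lem:hessian-close-1-4-2} then yields, with $\epsilon:=\Omega_\mu(x,\tau,y)$,
\begin{eqnarray*}
\|f_1\|_{\bar H(x(\mu),\tau(\mu))}\ \le\ 3\sqrt{\tfrac{\xi}{\xi-1}}\sqrt{\epsilon},
\qquad
\big\|\tfrac1\mu f_2\big\|_{[\bar H(x(\mu),\tau(\mu))]^{-1}}\ \le\ 3\sqrt{\tfrac{\xi}{\xi-1}}\sqrt{\epsilon}.
\end{eqnarray*}

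It remains to move the reference point from $(x(\mu),\tau(\mu))$ to $(x,\tau)$. The hypothesis $3\sqrt{\xi/(\bar\xi(\xi-1))}\sqrt{\epsilon}\le\frac14$ is, by the first estimate above, an upper bound for the $\bar\xi^{-1/2}$-scaled local-norm displacement of $(x,\tau)$ from $(x(\mu),\tau(\mu))$, so property \eqref{property-3} applied to $f$ gives $\bar H(x,\tau)\preceq\frac{16}{9}\bar H(x(\mu),\tau(\mu))$ and, inverting this PSD inequality, $[\bar H(x,\tau)]^{-1}\preceq\frac{16}{9}[\bar H(x(\mu),\tau(\mu))]^{-1}$. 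Thus each of the two bounds above gains a factor $\frac43$ when rewritten in the norm at $(x,\tau)$, and $\sqrt{a^2+b^2}\le a+b$ gives
\begin{eqnarray*}
\|d\|_{U^\top\cH(\bar H(x,\tau),\mu^2\bar H(x,\tau))U}\ \le\ 2\cdot\tfrac43\cdot 3\sqrt{\tfrac{\xi}{\xi-1}}\sqrt{\epsilon}\ =\ 8\sqrt{\tfrac{\xi}{\xi-1}}\sqrt{\epsilon},
\end{eqnarray*}
which is \eqref{eq:corr-proof-1}. I expect the only genuinely delicate point to be the dual block: the matrix $[\bar H(x,\tau)]^{-1}$ appearing in $\cH$ is $f''_*$ evaluated at $f'(x,\tau)$, not at the dual iterate $\frac1\mu(y,y_\tau)$ nor at the dual central-path point, but this causes no trouble because property \eqref{property-3} applied to $f$ already controls $\bar H(x,\tau)$ against $\bar H(x(\mu),\tau(\mu))$ and inversion preserves the inequality, so no separate gradient-map argument (e.g.\ via Lemma~\ref{lem:der-to-real}) is needed. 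Otherwise the proof is bookkeeping: identifying $Ud$ with $(f_1,f_2)$, recording that the central-path dual Hessian is $[\bar H(x(\mu),\tau(\mu))]^{-1}$, and observing that retaining the second $\rho$-term in \eqref{eq:lem:prox-LN-2} is free.
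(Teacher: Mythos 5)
Your proposal is correct and follows essentially the same route as the paper: the paper's own argument for Corollary~\ref{cor:bound-1} is precisely to rerun the proof of Lemma~\ref{lem:hessian-close-1} while retaining the dual $\rho$-term in \eqref{eq:lem:prox-LN-2}, obtain the bound $3\sqrt{\xi/(\xi-1)}\sqrt{\Omega_\mu}$ for each block of $Ud$ in the local norms at the central-path point, and then use property \eqref{property-3} with $r=1/4$ (justified by the hypothesis) to switch to the norms at $(x,\tau)$, which is exactly your bookkeeping. One small wording fix: the dual-block transport should invert the lower half $(1-r)^2\,\bar H(x(\mu),\tau(\mu))\preceq\bar H(x,\tau)$ of \eqref{property-3} (inverting the upper half you quote gives the reverse inequality), but since \eqref{property-3} is two-sided this is immediate and does not affect the argument.
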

This inequality gives us \eqref{eq:lem:ana-1-1} for $q=\bar \xi_1^2 \Omega_\mu(x,\tau,y)$ that we break down to get the bounds we need for the analysis of the corrector step.  
\subsection{Corrector step}
We focus on the case that $\alpha_1=\frac{\alpha_2}{\tau+\alpha_2 d_\tau}$ (assuming $\tau+\alpha_2d_\tau > 0$) in the updates of \eqref{eq:update}. By Remark \ref{rem-2}, $\mu^+=\mu$ for every $\alpha_2$ and so we just need to show that $\alpha_2$ can be chosen to get enough reduction in the proximity measure. 
Let $d^c$ be the corrector step derived by solving \eqref{eq:system-pred-1} with parameters defined in \eqref{eq:system-corrector-step-1}. We argued
by using \eqref{eq:corrector-16-2} that the value of $\langle d^c, U^\top \psi ^c \rangle $ represents the first order reduction in $\Omega_\mu$.   
On the other hand, by using \eqref{eq:system-pred-1} and \eqref{eq:system-corrector-step-1}, we can verify
\begin{eqnarray} \label{eq:corrector-36}
-\langle d^c,  U^\top \psi ^c \rangle = \left \|U^\top \psi ^c+ \beta r^0 \right \|^2_{(U^\top \cH U)^{-1}}.
\end{eqnarray}
The following key lemma shows that this quantity has a large enough value:
\begin{lemma} \label{lem:corr-ana}
Let $(x,\tau,y) \in Q_{DD}$. If 
\begin{eqnarray} \label{eq:bound-omega}
\Omega_\mu(x,\tau,y) \leq \frac{1}{100 \left((\bar \xi_2 \bar \xi_1 )^3+\bar \xi_3 \bar \xi_1^3\right)^2 },
\end{eqnarray}
where $\bar \xi_1=8\sqrt{\xi}/\sqrt{\xi-1}$ is defined in \eqref{eq:corr-proof-1} and 
\begin{eqnarray} \label{eq:xi-1}
\begin{array}{ll}
\bar \xi_2 := 3\sqrt{\frac{1}{\xi-1}} +\frac{7}{2},  & \ \ \ 
\bar \xi_3 := \frac{1}{2\sqrt{\xi-1}} \left(\frac{11}{2}+\frac{5}{\sqrt{\xi-1}}\right) \left(3+\frac{2}{\sqrt{\xi-1}} \right)+\frac{2}{\xi-1}\left(1+\frac{1}{\sqrt{\xi-1}} \right),
\end{array}
\end{eqnarray}
then,
\begin{eqnarray*}
\left\|U^\top \psi ^c+ \beta r^0\right\|_{(U^\top \cH U)^{-1}}   \geq   \frac{1}{4\bar \xi_1}  \sqrt{\Omega_\mu(x,\tau,y)},
\end{eqnarray*}
where $\beta$ is defined in \eqref{eq:system-corrector-step-1} and $\psi^c$ is defined in \eqref{eq:corrector-10-2}.
\end{lemma}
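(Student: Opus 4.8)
The plan is to start from the identity \eqref{eq:corrector-36}, which gives $\big\|U^\top\psi^c+\beta r^0\big\|^2_{(U^\top\cH U)^{-1}}=-\langle d^c,U^\top\psi^c\rangle$ for the corrector direction $d^c$, so the claim amounts to a lower bound on $-\langle d^c,U^\top\psi^c\rangle$. Since $\beta$ in \eqref{eq:system-corrector-step-1} is precisely the scalar that makes $U^\top\psi^c+\beta r^0$ orthogonal to $r^0$ in the $(U^\top\cH U)^{-1}$-inner product, the dual characterization of the norm reads
\[
\big\|U^\top\psi^c+\beta r^0\big\|_{(U^\top\cH U)^{-1}}=\max\big\{\langle U^\top\psi^c,d\rangle:\ \langle r^0,d\rangle=0,\ \|d\|_{U^\top\cH U}\le 1\big\},
\]
so it suffices to exhibit one admissible test direction $d$ (one with $\langle r^0,d\rangle=0$) for which $|\langle U^\top\psi^c,d\rangle|$ is at least $\tfrac14\Omega_\mu(x,\tau,y)$ after normalizing by $\|d\|_{U^\top\cH U}$.

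First I would test against the deviation of the current iterate from the central-path point of parameter $\mu$, written out in \eqref{eq:lem:corr-ana-2}: along it the value of $\mu$ is unchanged, so \eqref{eq:pred-3} gives $\langle r^0,d\rangle=0$, and Corollary \ref{cor:bound-1} gives $\|d\|_{U^\top\cH U}\le\bar\xi_1\sqrt{\Omega_\mu}$ together with the Dikin-type bounds \eqref{eq:dd-expand-2}--\eqref{eq:dd-expand-5} for $q:=\bar\xi_1^2\Omega_\mu$. Taking the full step $\alpha_2=1$, $\alpha_1=1/\tau(\mu)$ in Lemma \ref{lem:corr-3} sends the iterate exactly onto the central path, where the proximity vanishes, so \eqref{eq:corrector-16-2} becomes $-\langle d,U^\top\psi^c\rangle\ge\Omega_\mu-E$ (and the upper half of \eqref{eq:corrector-16-2} with $\rho(-D(1))\le D(1)^2$ gives the matching estimate when $E$ is large), where $E$ is the correction term of \eqref{eq:corrector-16-2} built from $\langle\Phi',A\bar d_x-d_\tau u\rangle$ and $\langle d_y,\Phi_*'\rangle+\langle c,\bar d_x\rangle$, each carrying a factor $d_\tau$; here $u:=Ax+\tfrac1\tau z^0$ and $w:=\tfrac{\tau}{\mu}y$. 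One then bounds $E$ and $D(1)$ term by term with the Cauchy--Schwarz inequality \eqref{eq:CS}, the s.c.\ estimates $\langle\Phi',[\Phi'']^{-1}\Phi'\rangle\le\vartheta$ and $\Phi''_*(w)[w,w]\le\vartheta$ from \eqref{property-2-2} and \eqref{eq:norm-phi*-1}, and the near-central-path comparisons $[\Phi''(u)]^{-1}\preceq(1-\sigma(\Omega_\mu))^{-2}\Phi''_*(w)$ and $\|\Phi_*'(w)-u\|_{\Phi''(u)}\le\sigma(\Omega_\mu)/(1-\sigma(\Omega_\mu))$ coming from \eqref{eq:LF-2}, property \eqref{property-3} and Lemma \ref{lem:der-to-real}; the constants $\bar\xi_2,\bar\xi_3$ of \eqref{eq:xi-1} are exactly what emerges from this accounting.

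Should $E$ turn out to be comparable to $\Omega_\mu$ (so that this quotient is not manifestly bounded away from $0$), I would instead use a Newton-type test direction aligned with the dual residual $\Phi'(u)-\tfrac{\tau}{\mu}y$: a primal move $A\delta x\in\range(A)$ and a dual move $-F^\top\delta v\in\ker(A^\top)$ chosen to cancel, respectively, its $\range(A)$- and $\ker(A^\top)$-components, together with a compensating $\delta\tau$ (first order in the step) that restores $\langle r^0,d\rangle=0$. Since $\range(A)$ and $\ker(A^\top)$ are orthogonal complements and $[\Phi''_*(w)]^{-1}$ is $(1-\sigma(\Omega_\mu))^{\pm 2}$-close to $\Phi''(u)$, this direction, normalized in the $U^\top\cH U$-metric, makes $\langle U^\top\psi^c,d\rangle$ have leading term a fixed fraction of $\lambda:=\|\tfrac{\tau}{\mu}y-\Phi'(u)\|_{[\Phi''(u)]^{-1}}$, the coupling through $\delta\tau$ entering only at order $\lambda^2$; Corollary \ref{coro:bound-prox-1} gives $\Omega_\mu\le\rho(-\lambda)\le\lambda^2$, hence $\lambda\ge\sqrt{\Omega_\mu}$, and combining the estimates closes the bound.

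The hard part is the final accounting. Before the smallness hypothesis is used, the correction and coupling terms ($E$, $D(1)^2$, or the order-$\lambda^2$ contribution) are only $O(\Omega_\mu)$ with constants polynomial in $\bar\xi_1$ and $(\xi-1)^{-1/2}$, which after division by $\|d\|_{U^\top\cH U}\asymp\bar\xi_1\sqrt{\Omega_\mu}$ is a priori of the same order $\sqrt{\Omega_\mu}$ as the main term. The purpose of \eqref{eq:bound-omega} --- equivalently, of forcing $\bar\xi_1^{3/2}\sqrt{\Omega_\mu}$ to be tiny --- is exactly to demote these errors to a genuinely small fraction of the main term, and arranging the admissible constants (the cubes in the threshold, the factor $\tfrac14$ in the conclusion) so that everything lines up is the delicate step.
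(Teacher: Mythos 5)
Your setup coincides with the paper's: the variational characterization of $\left\|U^\top\psi^c+\beta r^0\right\|_{(U^\top\cH U)^{-1}}$ as a maximum over directions orthogonal to $r^0$ in the unit Dikin ball, the test direction $d$ from \eqref{eq:lem:corr-ana-2} with $\|d\|_{U^\top\cH(\bar H,\mu^2\bar H)U}\le\bar\xi_1\sqrt{\Omega_\mu}$ from Corollary \ref{cor:bound-1}, and the observation that $\alpha_2=1$ lands exactly on the central path so that Lemma \ref{lem:corr-3} turns into an estimate for $-\langle d,U^\top\psi^c\rangle$. The gap is in how you dispose of the error terms. The quantities you call $E$ and $\rho(\pm D(1))\approx\tfrac12 D(1)^2$ are bounded only by a \emph{large} constant times $\Omega_\mu$: for instance $D(1)\le\bar\xi_2\bar\xi_1\sqrt{\Omega_\mu}$ gives $D(1)^2\le\bar\xi_2^2\bar\xi_1^2\Omega_\mu$ with $\bar\xi_1\ge 8$ and $\bar\xi_2\ge 7/2$, and the $E$-terms are likewise products of two first-order-in-$d$ quantities, hence of size $O(\bar\xi_1^2\Omega_\mu)$. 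These scale \emph{linearly} in $\Omega_\mu$, exactly like the main term $\Omega_\mu$, so shrinking $\Omega_\mu$ via \eqref{eq:bound-omega} does not ``demote these errors to a genuinely small fraction of the main term''; that hypothesis only controls the genuinely cubic remainders of order $\Omega_\mu^{3/2}$. A direct term-by-term estimate along your first route yields $-\langle d,U^\top\psi^c\rangle\ge\Omega_\mu-C\,\Omega_\mu$ with $C\gg1$, which is vacuous, and your stated mechanism for rescuing it is therefore incorrect.

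The idea you are missing is the paper's two-step-length contradiction. One first rewrites the sandwich \eqref{eq:corrector-16-2} as \eqref{eq:corrector-16-3} so that the dangerous part of the middle expression becomes an \emph{exact quadratic} in $\alpha_2$, while the outer bounds become $O\bigl(\alpha_2^3\Omega_\mu^{3/2}\bigr)$ --- and it is only these cubic bounds that \eqref{eq:bound-omega} renders small, namely at most $\tfrac1{10}\alpha_2^3\Omega_\mu$. If $-\langle d,U^\top\psi^c\rangle<\tfrac14\Omega_\mu$, then at $\alpha_2=1$ (where $\Omega_\mu(x^+,\tau^+,y^+)=0$) the quadratic term must exceed $\bigl(\tfrac34-\tfrac1{10}\bigr)\Omega_\mu$; being exactly quadratic it is four times as large at $\alpha_2=2$, where $\Omega_\mu(x^+,\tau^+,y^+)\ge0$ then pushes the middle expression above its cubic upper bound $\tfrac8{10}\Omega_\mu$ --- a contradiction. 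Your fallback of a Newton-type test direction aligned with the dual residual is not developed enough to assess: you assert without argument that such a direction can be made orthogonal to $r^0$ with the $\delta\tau$-coupling entering only at order $\lambda^2$ with controlled constants, and verifying this would require the same careful accounting that defeats your first route. As written, the proof does not close.
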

\begin{proof}
Note that $-\|U^\top \psi ^c+ \beta r^0\|_{(U^\top \cH U)^{-1}}$ is the optimal objective value of \eqref{eq:lem:corr-ana-1} and
we find an upper bound for it by using a specific feasible solution. Our feasible solution is
\begin{eqnarray} \label{eq:lem:corr-ana-2-2}
\frac{ d}{\| d\|_{U^\top \cH(\bar H, \mu^2 \bar H) U}},
\end{eqnarray}
where $d$ is defined in \eqref{eq:lem:corr-ana-2} and we have Corollary \ref{cor:bound-1} for a bound on its local norm. We can verify that \eqref{eq:lem:corr-ana-2-2} satisfies all the constraints. 
 Now, we need to prove that $-\langle  d, U^\top \psi ^c \rangle$ is large enough. The idea of the proof is that we consider the bounds in \eqref{eq:corrector-16-2} at $\alpha_2=1$ and $\alpha_2=2$, and if $-\langle  d, U^\top \psi ^c \rangle$ is not large enough, we get a contradiction.

For simplicity, let $\Omega_\mu:=\Omega_\mu(x,\tau,y)$ and define $\sqrt{q}=\bar \xi_1 \sqrt{\Omega_\mu}$ for $\bar \xi_1$ defined in \eqref{eq:corr-proof-1}. Then \eqref{eq:lem:ana-1-1} becomes the expansion of \eqref{eq:corr-proof-1} and we have all the inequalities we extracted after Lemma \ref{lem:ana-1}, which we use to find bounds for the terms we have in \eqref{eq:corrector-16-2}. For the first term of $D(\alpha_2)$ we can use \eqref{eq:dd-expand-3}. 
For the second term of $D(\alpha_2)$ we use triangle inequality and we have
\begin{eqnarray} \label{eq:lem:corr-ana-6}
\left \| \frac{d_\tau y}{\mu} \right\|_{\Phi_*''} =\frac{d_\tau}{\tau}\left \| \frac{\tau y}{\mu} \right\|_{\Phi_*''}  \leq  \sqrt{\frac{q}{\xi-1}},  \ \ \ \text{using \eqref{eq:dd-expand-2} and \eqref{eq:norm-phi*-1}},
\end{eqnarray}
and using \eqref{eq:dd-expand-2} and \eqref{eq:dd-expand-5}, we have
\begin{eqnarray} \label{eq:lem:corr-ana-7}
\left \| \frac{ (\tau +\alpha_2 d_\tau) d_y}{\mu} \right\|_{\Phi_*''} &=&  \frac{(\tau +\alpha_2 d_\tau)}{\tau}\left \| \frac{ \tau d_y}{\mu} \right\|_{\Phi_*''} 
 \leq \left(1+\alpha_2 \sqrt{\frac{q}{(\xi-1)\vartheta}}\right) \sqrt{q}.
\end{eqnarray} 
If we use the CS inequality \eqref{eq:CS} for $B=\Phi''$ and use $\|\Phi'\|_{[\Phi'']^{-1}} \leq \sqrt{\vartheta}$ (see \eqref{property-2-2}), then \eqref{eq:dd-expand-2} and \eqref{eq:dd-expand-3} imply
\begin{eqnarray}  \label{eq:corrector-37}
\left |\frac{ d_\tau}{\tau(\tau+ \alpha_2 d_\tau)}  \langle \Phi', A \bar d_x -d_\tau \left(Ax+\frac 1\tau z^0 \right) \rangle \right|  
\leq \frac{\sqrt{\frac{q}{\xi-1}}}{1-\alpha_2 \sqrt{\frac{q}{(\xi-1)\vartheta}}}\left(1+\sqrt{\frac{1}{\xi-1}} \right)\sqrt{q}.
\end{eqnarray}
We want to make the second line of the term in the middle of inequalities in \eqref{eq:corrector-16-2} a quadratic in terms of $\alpha_2$, while the upper and lower bounds are proportional to $\alpha_2^3$. To do this, we modify \eqref{eq:corrector-16-2} by adding and subtracting some terms to all sides as:
\begin{eqnarray}\label{eq:corrector-16-3}
&&\begin{array}{rcl}
&& \rho(D(\alpha_2))-\frac{1}{2}(D(\alpha_2))^2+\hat D(\alpha_2)   \\
& \leq &\Omega_\mu(x^+ ,\tau^+,y^+)-\Omega_\mu(x,\tau,y) -\alpha_2   \left [ \bar d_x^\top \ \ d_\tau \  \  d^\top_v \right]  U^\top  \psi^c  \\
&&+ \frac{\alpha_2^2 d_\tau}{\tau^2}  \langle \Phi', A \bar d_x -d_\tau \left(Ax+\frac 1\tau z^0 \right) \rangle - \frac{\alpha_2^2 d_\tau}{\mu}  \left ( \langle d_y,\Phi_*' \rangle + \langle c, \bar d_x \rangle \right) -\frac{1}{2}(\bar D(\alpha_2))^2  \\
&\leq &\rho \left(-D(\alpha_2) \right)-\frac{1}{2}(D(\alpha_2))^2+ \hat D(\alpha_2),\\
 \bar D(\alpha_2)&:=& \frac{\alpha_2}{\tau}\left \| A \bar d_x -d_\tau \left(Ax+\frac 1\tau z^0 \right) \right\|_{\Phi''}+\alpha_2\left \| \frac{d_\tau y + \tau  d_y}{\mu} \right\|_{\Phi_*''},  \\
 \hat D(\alpha_2) &:=& \frac{1}{2} \left((D(\alpha_2))^2-(\bar D(\alpha_2))^2\right)+\frac{\alpha_3^2 d^2_\tau}{\tau^2(\tau+ \alpha_2 d_\tau)}  \langle \Phi', A \bar d_x -d_\tau \left(Ax+\frac 1\tau z^0 \right) \rangle.
\end{array} 
\end{eqnarray}
 Note that by definition \eqref{eq:rho}, we can verify that
\begin{eqnarray} \label{eq:lem:corr-ana-8}
\rho(-t)-\frac{t^2}{2}  \leq t^3,  \ \ \ \ \frac{t^2}{2}-\rho(t) \leq t^3,  \ \ \ \forall t \in (0,0.8).
\end{eqnarray} 
Let us assume that $2\sqrt{\frac{q}{(\xi-1)\vartheta}} \leq \frac 12$, then \eqref{eq:dd-expand-3}, \eqref{eq:lem:corr-ana-6}, \eqref{eq:lem:corr-ana-7}, and \eqref{eq:corrector-37} yield that for $\alpha_2 \in (0,2)$ we have
\begin{eqnarray} \label{eq:lem:corr-ana-D1}
|D(\alpha_2)| &\leq& \alpha_2 \bar \xi_2 \sqrt{q} = \alpha_2 \bar \xi_2 \bar \xi_1 \sqrt{\Omega_\mu}, \nonumber \\
\ifdetails 
|D(\alpha_2)| &\leq& \alpha_2 \left(2\left(1 +  \sqrt{\frac{1}{\xi-1}}\right) +\frac{3}{2}+\sqrt{\frac{1}{\xi-1}} \right)\sqrt{q} \\ \fi
|\bar D(\alpha_2)| &\leq& \alpha_2 \left(2\sqrt{\frac{1}{\xi-1}} +2\right) \bar \xi_1 \sqrt{\Omega_\mu}, \nonumber \\
\ifdetails
|\hat D(\alpha_2)| &\leq& \alpha_2^3 \left(\frac 12 \left(2\sqrt{\frac{1}{\xi-1}} +2+3\sqrt{\frac{1}{\xi-1}} +\frac{7}{2}\right) \frac{1}{\sqrt{\xi-1}}\left( 2\left(1+\frac{1}{\sqrt{\xi-1}}\right)+1 \right) \right) \nonumber \\ 
&& + \frac{2}{\xi-1} \left(1+\sqrt{\frac{1}{\xi-1}} \right) \nonumber \\ \fi
 |\hat D(\alpha_2)| &\leq& \alpha_2^3 \bar \xi_3  \bar \xi_1^3\Omega_\mu^{3/2},
\end{eqnarray}
where $\bar \xi_2$ nd $\bar \xi_3$ are defined in \eqref{eq:xi-1}. 
For the bound on $|\hat D(\alpha_2)|$, we also used the fact that 
\begin{eqnarray*} \label{eq:lem:corr-ana-D1-2}
|D(\alpha_2)-\bar D(\alpha_2)| \leq \frac{\alpha_2^2 |d_\tau|}{(\tau+\alpha_2 d_\tau)\tau} \left \| A \bar d_x -d_\tau \left(Ax+\frac 1\tau z^0 \right) \right\|_{\Phi''}+\alpha^2_2\left \| \frac{d_\tau  d_y}{\mu} \right\|_{\Phi_*''}.
\end{eqnarray*}
If we have $\bar \xi_2 \bar \xi_1 \sqrt{\Omega_\mu}  \leq 0.8$, by using \eqref{eq:lem:corr-ana-8} and \eqref{eq:lem:corr-ana-D1}, the middle term of \eqref{eq:corrector-16-3} is squeezed between $\pm \alpha_2^3 \left((\bar \xi_2 \bar \xi_1 )^3+\bar \xi_3 \bar \xi_1^3\right) \Omega_\mu^{3/2}$ for $\alpha_2 \in (0,2)$. We want to choose $\Omega_\mu$ small enough to make the term in the middle of \eqref{eq:corrector-16-3} be squeezed between  $\pm \frac{1}{10} \Omega_\mu$ for $\alpha_2=1$;  it suffices to have
\begin{eqnarray} \label{eq:lem:corr-ana-D2}
\left((\bar \xi_2 \bar \xi_1 )^3+\bar \xi_3 \bar \xi_1^3\right)  \Omega_\mu^{3/2}  \leq \frac{1}{10}  \Omega_\mu  \ \ \underbrace{\Leftrightarrow}_{\text{for $\Omega_\mu>0$}} \ \ \Omega_\mu \leq \frac{1}{100 \left((\bar \xi_2 \bar \xi_1 )^3+\bar \xi_3 \bar \xi_1^3\right)^2 }.
\end{eqnarray}
We claim that in this case, $-\langle  d, U^\top \psi ^c \rangle \geq \frac{1}{4} \Omega_\mu$. If we substitute $\alpha_2=1$, then $\Omega_\mu(x^+ ,\tau^+,y^+)=0$ as we can verify that  the point lays on the central path. Suppose for the sake of reaching a contradiction $-\langle d, U^\top \psi ^c \rangle < \frac{1}{4} \Omega_\mu$. Then, in view of \eqref{eq:corrector-16-3}, we must have
\begin{eqnarray*} \label{eq:lem:corr-ana-4}
 \frac{ d_\tau}{\tau^2}  \langle \Phi', A \bar d_x -d_\tau \left(Ax+\frac 1\tau z^0 \right) \rangle - \frac{ d_\tau}{\mu}  \left ( \langle \Phi_*', d_y \rangle + \langle c, \bar d_x \rangle \right) - \frac{1}{2}(\bar D(1))^2  \geq \left(\frac{3}{4}-\frac{1}{10} \right)  \Omega_\mu. 
\end{eqnarray*}
We reach our contradiction when we consider $\alpha_2=2$. 
For $\alpha_2=2$ we have $\Omega_\mu(x^+ ,\tau^+,y^+) \geq 0$. The term in the second line of \eqref{eq:corrector-16-2} is degree 2 of $\alpha_2$ and so becomes at least $\left(\frac{12}{4}-\frac{4}{10} \right) \Omega_\mu$ for $\alpha_2=2$. Then, at $\alpha_2=2$, \eqref{eq:corrector-16-3} implies
\begin{eqnarray*} \label{eq:lem:corr-ana-9}
-\Omega_\mu(x,\tau,y)+ \left(\frac{12}{4}-\frac{4}{10} \right) \Omega_\mu(x,\tau,y)  \leq \frac{8}{10} \Omega_\mu(x,\tau,y),
\end{eqnarray*} 
which is a contradiction. 

Now, if we consider the feasible solution  \eqref{eq:lem:corr-ana-2-2}  for the optimization problem \eqref{eq:lem:corr-ana-1} and putting together the bounds $-\langle  d, U^\top \psi ^c \rangle \geq \frac{1}{4} \Omega_\mu$ and $\| d\|_{U^\top \cH(\bar H, \mu^2 \bar H) U} \leq \bar \xi_1 \sqrt{\Omega_\mu}$ from \eqref{eq:corr-proof-1}, we get the result of the lemma. We can verify that for $\xi>1$, \eqref{eq:bound-omega} implies the other bounds we used for $\Omega_\mu$ in the proof, including the hypothesis of Corollary \ref{cor:bound-1}, $\bar \xi_2 \bar \xi_1 \sqrt{\Omega_\mu}  \leq 0.8$, and $2\sqrt{\frac{q}{(\xi-1)\vartheta}} \leq \frac 12$. 
\end{proof}
Now we are ready to prove the main proposition for the corrector step. 
\begin{proposition}  \label{prop:corr-ana}
Let $(x,\tau,y) \in Q_{DD}$ satisfy \eqref{eq:bound-omega}. Assume that the corrector step $d^c$ is calculated by  solving \eqref{eq:system-pred-1} with parameters defined in \eqref{eq:system-corrector-step-1}, and we choose $\alpha_1=\frac{\alpha_2}{\tau+\alpha_2 d_\tau}$ for the updates of \eqref{eq:update}. Consider $\bar \xi_1$ and $\bar \xi_2$ defined in \eqref{eq:corr-proof-1} and \eqref{eq:xi-1}, respectively. Then, for
\begin{eqnarray}  \label{eq:corrector-43}
\alpha_2:= \frac{1}{2(\bar \xi_4 + \bar \xi_2^2)},  \ \ \ \bar \xi_4:= 2\sqrt{\frac{1}{\xi-1}}  \left(1+\sqrt{\frac{1}{\xi-1}} \right) + \frac{\sqrt{\xi}+2}{\sqrt{\xi-1}},
\end{eqnarray}
we have
\begin{eqnarray}  \label{eq:corrector-44}
\Omega_\mu(x^+,\tau^+,y^+)-\Omega_\mu(x,\tau,y)  \leq - \frac{\alpha_2}{32 \bar \xi^2_1}. 
\end{eqnarray}
\end{proposition}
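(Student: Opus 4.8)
The plan is to read off a guaranteed first-order decrease from Lemma~\ref{lem:corr-ana}, bound the second-order remainder of the master inequality~\eqref{eq:corrector-16-3} by the Dikin-ellipsoid estimates, and then let the choice of $\alpha_2$ in~\eqref{eq:corrector-43} balance the two. For the setup: with $\alpha_1=\frac{\alpha_2}{\tau+\alpha_2 d_\tau}$, Remark~\ref{rem-2} gives $\mu^+:=\mu(x^+,\tau^+,y^+)=\mu$, so $\Omega_\mu$ is still the correct proximity measure at the new iterate. Since the corrector direction $d^c$ solves~\eqref{eq:system-pred-1} with $\hat H=\mu^2\bar H$ and right-hand side $r_{RHS}=-(U^\top\psi^c+\beta r^0)$ from~\eqref{eq:system-corrector-step-1}, the scalar $q:=(d^c)^\top U^\top\cH(\bar H,\mu^2\bar H)U d^c$ equals $\|U^\top\psi^c+\beta r^0\|^2_{(U^\top\cH U)^{-1}}=-\langle d^c,U^\top\psi^c\rangle$ by~\eqref{eq:corrector-36}, and Lemma~\ref{lem:corr-ana}, whose hypothesis is exactly~\eqref{eq:bound-omega}, gives $q\ge\Omega_\mu/(16\bar\xi_1^2)$. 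Hence the linear contribution to~\eqref{eq:corrector-16-3}, namely $\alpha_2\langle d^c,U^\top\psi^c\rangle=-\alpha_2 q$, is already at most $-\alpha_2\Omega_\mu/(16\bar\xi_1^2)$.

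Second, I would bound every remaining term in~\eqref{eq:corrector-16-3} --- $\rho(-D(\alpha_2))-\tfrac12 D(\alpha_2)^2$, $\hat D(\alpha_2)$, $\tfrac12\bar D(\alpha_2)^2$, and the cross-terms $\tfrac{\alpha_2^2 d_\tau}{\tau^2}\langle\Phi',A\bar d_x-d_\tau(Ax+\tfrac1\tau z^0)\rangle$ and $\tfrac{\alpha_2^2 d_\tau}{\mu}(\langle d_y,\Phi_*'\rangle+\langle c,\bar d_x\rangle)$ --- exactly as in the proof of Lemma~\ref{lem:corr-ana}, but now using the Dikin-ellipsoid consequences~\eqref{eq:dd-expand-2}--\eqref{eq:dd-expand-5} of~\eqref{eq:lem:ana-1-1} for this very $q$ (legitimate precisely because $\hat H=\mu^2\bar H$), together with $\|\Phi'\|_{[\Phi'']^{-1}}\le\sqrt\vartheta$, $\|\tfrac{\tau y}\mu\|_{\Phi_*''}\le\sqrt\vartheta$ (properties~\eqref{property-2-2} and~\eqref{eq:norm-phi*-1}) and~\eqref{eq:CS}. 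This reproduces the bounds~\eqref{eq:lem:corr-ana-D1} with $\sqrt q$ replacing $\bar\xi_1\sqrt{\Omega_\mu}$: $|D(\alpha_2)|\le\alpha_2\bar\xi_2\sqrt q$, $|\bar D(\alpha_2)|\le\alpha_2(2+2/\sqrt{\xi-1})\sqrt q$, $|\hat D(\alpha_2)|\le\alpha_2^3\bar\xi_3 q^{3/2}$, and each cross-term $\le$ an explicit $\xi$-dependent constant times $\alpha_2^2 q$. Invoking $\rho(-t)-\tfrac12 t^2\le t^3$ from~\eqref{eq:lem:corr-ana-8} (valid since~\eqref{eq:bound-omega} keeps $D(\alpha_2)<0.8$ for $\alpha_2\le1$), the middle term of~\eqref{eq:corrector-16-3} collapses and one is left with
\[
\Omega_\mu(x^+,\tau^+,y^+)-\Omega_\mu(x,\tau,y)\ \le\ -\alpha_2 q+(\bar\xi_4+\bar\xi_2^2)\,\alpha_2^2 q+c_0\,\alpha_2^3 q^{3/2},
\]
where $\bar\xi_4$ from~\eqref{eq:corrector-43} absorbs the $O(\alpha_2^2 q)$ coefficients of the cross-terms, $\bar\xi_2^2$ absorbs that of $\tfrac12\bar D(\alpha_2)^2$, and $c_0=c_0(\xi)$ collects all cubic coefficients.

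Third, I would substitute $\alpha_2=\frac1{2(\bar\xi_4+\bar\xi_2^2)}$: then $(\bar\xi_4+\bar\xi_2^2)\alpha_2^2 q=\tfrac12\alpha_2 q$, so the first two terms sum to $-\tfrac12\alpha_2 q$, and~\eqref{eq:bound-omega} makes the cubic remainder at most $\tfrac14\alpha_2 q$, leaving $\Omega_\mu(x^+,\tau^+,y^+)-\Omega_\mu(x,\tau,y)\le-\tfrac14\alpha_2 q$, which with $q\ge\Omega_\mu/(16\bar\xi_1^2)$ gives the per-step reduction asserted in~\eqref{eq:corrector-44}. The step I expect to be the real obstacle --- the only one that is not a rerun of Lemma~\ref{lem:corr-ana} --- is the matching \emph{upper} bound $q=O(\Omega_\mu)$ needed to render the cubic term negligible through~\eqref{eq:bound-omega}: here one uses that on the central path $\psi^c=\psi^p$ and $U^\top\psi^p(\mu)=-\tfrac1\mu r^0$, whence $U^\top\psi^c+\beta r^0$ vanishes at $(x(\mu),\tau(\mu),y(\mu))$ (indeed $d^c=0$ there), and then a Lipschitz estimate of $\psi^c$ and of $\beta$ along the segment from $(x,\tau,y)$ to $(x(\mu),\tau(\mu),y(\mu))$ --- whose $U^\top\cH U$-length is at most $\bar\xi_1\sqrt{\Omega_\mu}$ by Corollary~\ref{cor:bound-1} --- yields $\sqrt q=\|U^\top\psi^c+\beta r^0\|_{(U^\top\cH U)^{-1}}\le\mathrm{const}\cdot\bar\xi_1\sqrt{\Omega_\mu}$. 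One finally checks, as at the close of the proof of Lemma~\ref{lem:corr-ana}, that~\eqref{eq:bound-omega} with $\xi>1$ forces the side conditions used above ($\alpha_2|d_\tau/\tau|\le\tfrac12$, $\alpha_2\bar\xi_2\sqrt q\le0.8$, $2\sqrt{q/((\xi-1)\vartheta)}\le\tfrac12$) for $\alpha_2\le1$.
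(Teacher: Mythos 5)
Your overall strategy coincides with the paper's: identify $q:=\|U^\top\psi^c+\beta r^0\|^2_{(U^\top\cH U)^{-1}}=-\langle d^c,U^\top\psi^c\rangle$ via \eqref{eq:corrector-36}, control the higher-order terms of Lemma \ref{lem:corr-3} through the Dikin-type bounds \eqref{eq:dd-expand-2}--\eqref{eq:dd-expand-5} with this $q$, choose $\alpha_2$ to balance, and only at the very end invoke the lower bound $q\geq\Omega_\mu/(16\bar\xi_1^2)$ from Lemma \ref{lem:corr-ana}. However, there is a genuine gap created by your choice of decomposition. By passing to the cubic form \eqref{eq:corrector-16-3} (i.e., using $\rho(-t)-\tfrac12 t^2\leq t^3$) you are left with a remainder $c_0\,\alpha_2^3 q^{3/2}$, and to make it at most $\tfrac14\alpha_2 q$ you need an \emph{upper} bound $\sqrt q\leq \mathrm{const}\cdot\bar\xi_1\sqrt{\Omega_\mu}$. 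You correctly identify this as the crux of your argument, but you only sketch it (a Lipschitz estimate of $\psi^c$ and $\beta$ along the segment to the central path); no such estimate is established anywhere, and proving it is not routine. The paper avoids needing it altogether: it works with \eqref{eq:corrector-16-2} directly, bounds $\rho(-D(\alpha_2))\leq D(\alpha_2)^2\leq\bar\xi_2^2\alpha_2^2 q$, and bounds the two cross-terms by $\bar\xi_4\alpha_2^2 q$, arriving at $\Omega_\mu(x^+,\tau^+,y^+)-\Omega_\mu(x,\tau,y)\leq(-\alpha_2+(\bar\xi_4+\bar\xi_2^2)\alpha_2^2)\,q$, in which \emph{every} error term is proportional to $q$. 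The choice $\alpha_2=\tfrac1{2(\bar\xi_4+\bar\xi_2^2)}$ then makes the right-hand side $\leq-\tfrac12\alpha_2 q$ regardless of how large $q$ is, and only the lower bound on $q$ is needed to conclude. (Your route also loses a factor of $2$, yielding $-\alpha_2 q/4$ and hence $64\bar\xi_1^2$ rather than the stated $32\bar\xi_1^2$.)

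A second, smaller omission: you assert that each cross-term is bounded by an explicit constant times $\alpha_2^2 q$ "exactly as in the proof of Lemma \ref{lem:corr-ana}," but the term $\tfrac{\alpha_2^2 d_\tau}{\mu}\bigl(\langle d_y,\Phi_*'\rangle+\langle c,\bar d_x\rangle\bigr)$ is \emph{not} handled in that lemma. The paper must derive the new estimates \eqref{eq:corrector-38} (extracted from the fourth term of \eqref{eq:lem:ana-1-1}) and \eqref{eq:corrector-39} (which additionally uses Corollary \ref{coro:bound-prox-1}, Lemma \ref{lem:der-to-real}, and the decomposition $\Phi_*'=(Ax+\tfrac1\tau z^0)+(\Phi_*'-Ax-\tfrac1\tau z^0)$) to obtain the coefficient $\tfrac{\sqrt\xi+2}{\sqrt{\xi-1}}$ appearing in $\bar\xi_4$. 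Without these your constant $\bar\xi_4$ is unaccounted for. To repair your proof with minimal change: drop the cubic decomposition, keep the terms grouped as in \eqref{eq:corrector-16-2}, bound $\rho(-D(\alpha_2))$ by $D(\alpha_2)^2$, supply the bounds \eqref{eq:corrector-38}--\eqref{eq:corrector-39}, and defer any use of the relation between $q$ and $\Omega_\mu$ to the final step.
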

\begin{proof}
Assume that $d^c=[\bar d_x^\top \ d_\tau \ d_v^\top]$ is the corrector search direction. Then, by \eqref{eq:system-pred-1} and  \eqref{eq:system-corrector-step-1} we have 
\begin{eqnarray}  \label{eq:corrector-34}
(d^c)^\top U^\top  \cH U  d^c=\|U^\top \psi ^c+ \beta r^0\|^2_{(U^\top \cH U)^{-1}}.
\end{eqnarray}
Hence, we have inequality \eqref{eq:lem:ana-1-1} with $q:=\|U^\top \psi ^c+ \beta r^0\|^2_{(U^\top \cH U)^{-1}}$, and we already have the bounds \eqref{eq:dd-expand-3}, \eqref{eq:lem:corr-ana-6}, \eqref{eq:lem:corr-ana-7}, and \eqref{eq:corrector-37}. 
Here, we use \eqref{eq:lem:ana-1-1} to get another bound; if we consider the last term in the LHS of \eqref{eq:lem:ana-1-1}, we get
\begin{eqnarray}  \label{eq:corrector-38}
\frac{ \tau}{\mu}\left |  \langle d_y,Ax+\frac{1}{\tau} z^0 \rangle + \langle c, \bar d_x \rangle \right| \leq   (\sqrt{\xi}+1) \sqrt{\vartheta q}.
\end{eqnarray}
Note that from Corollary \ref{coro:bound-prox-1}, we have $\left\|\frac{\tau y}{\mu} - \Phi' \right\|_{[\Phi'']^{-1}} \leq \sigma(\Omega_\mu)$. Using this and \eqref{eq:corrector-38}, we have
\begin{eqnarray}  \label{eq:corrector-39}
\ \ \ \ \begin{array}{rcl}
&&\left|\frac{ d_\tau}{\mu}  \left ( \langle d_y,\Phi_*' \rangle + \langle c, \bar d_x \rangle \right) \right|  \\
&=&\left|\frac{ d_\tau}{\mu}  \left ( \langle d_y,Ax+\frac{1}{\tau} z^0 \rangle + \langle c, \bar d_x \rangle+\langle d_y,\Phi_*' - Ax+\frac{1}{\tau} z^0\rangle \right)\right|   \\
&\leq&\left|\frac{ d_\tau}{\tau}\right|  \left (\frac{ \tau}{\mu}\left |  \langle d_y,Ax+\frac{1}{\tau} z^0 \rangle + \langle c, \bar d_x \rangle \right| +  \frac{\tau}{\mu} \|d_y\|_{[\Phi'']^{-1}}\left\|\Phi_*' - Ax+\frac{1}{\tau} z^0\right\|_{\Phi''}\right)  \\
&\leq& \sqrt{\frac{q}{(\xi-1)\vartheta}} \left((\sqrt{\xi}+1) \sqrt{\vartheta q} +  \frac{\tau}{\mu} \|d_y\|_{[\Phi'']^{-1}}\left\|\Phi_*' - Ax+\frac{1}{\tau} z^0\right\|_{\Phi''} \right), \ \ \ \hfill \text{by \eqref{eq:dd-expand-2} and \eqref{eq:corrector-38},} \\
&\leq& \sqrt{\frac{q}{(\xi-1)\vartheta}} \left((\sqrt{\xi}+1) \sqrt{\vartheta q} +  \sqrt{q} \frac{\sigma(\Omega_\mu)}{1-\sigma(\Omega_\mu)} \right), \hfill \text{by \eqref{eq:dd-expand-5} and Lemma \ref{lem:der-to-real}},  \\
&\leq& \frac{\sqrt{\xi}+2}{\sqrt{\xi-1}} q, \hfill \text{for the case $\sigma(\Omega_\mu) \leq 0.5$}.
\end{array}
\end{eqnarray}

We want to work with the second inequality in \eqref{eq:corrector-16-2}. We already have a bound for $D(\alpha_2)$ in \eqref{eq:lem:corr-ana-D1} and we also have
$\rho(-t) \leq t^2$ for $t \in (0,0.6)$. 
By substituting  \eqref{eq:corrector-37} and \eqref{eq:corrector-39}, we get
\begin{eqnarray}  \label{eq:corrector-35}
\Omega_\mu(x^+,\tau^+,y^+)-\Omega_\mu(x,\tau,y)   \leq (-\alpha_2+(\bar \xi_4 + \bar \xi_2^2) \alpha_2^2)  \|U^\top \psi ^c+ \beta r^0\|^2_{(U^\top \cH U)^{-1}},
\end{eqnarray}
where $\bar \xi_4$ is defined in \eqref{eq:corrector-43}. 
If we choose $\alpha_2 \leq \frac{1}{2(\bar \xi_4 + \bar \xi_2^2)}$, then for the RHS we have
\begin{eqnarray}  \label{eq:corrector-42}
\leq -\frac{1}{2}  \alpha_2  \|U^\top \psi ^c+ \beta r^0\|^2_{(U^\top \cH U)^{-1}} \leq - \frac{\alpha_2}{32 \bar \xi^2_1}  \Omega_\mu,
\end{eqnarray}
where we used the bound for $ \|U^\top \psi ^c+ \beta r^0\|^2_{(U^\top \cH U)^{-1}}$ by Lemma \ref{lem:corr-ana}.
\end{proof}

\subsection{Complexity of following the path to $\mu=+\infty$}   \label{ch:com-result}
We have analyzed the predictor and corrector search directions in Section \ref{sec:analysis}. Now we can modify the statement of our predictor-corrector algorithm to one that provably follows the path in polynomial time. \\
\noindent\rule{16cm}{0.6pt}\\
\noindent \textbf{Polynomial-time Predictor-Corrector Algorithm  (PtPCA)} \vspace{-0.3cm} \\
\noindent\rule{16cm}{0.6pt}\\
\noindent {\bf Initialization:} Choose $z^0 \in \inte D$ and set $y^0:=\Phi'(z^0)$. Set $x^0:=0$,  $\tau_0:=1$, $\mu_0:=\mu(x^0,\tau_0,y^0)$, and $k=0$. Choose a constant $\xi > 1$  and constants $0 < 4 \delta_1 < \delta_2  \leq \frac{1}{100 \left((\bar \xi_2 \bar \xi_1 )^3+\bar \xi_3 \bar \xi_1^3\right)^2 }$, where $\bar \xi_1$, $\bar \xi_2$, and $\bar \xi_3$ are functions of $\xi$ defined in \eqref{eq:corr-proof-1} and \eqref{eq:xi-1}.   \\
\noindent {\bf while} (the stopping criteria are not met) 
\begin{addmargin}{1cm}
 {\bf if} ($\Omega_{\mu_k}(x^k,\tau_k,y^k) > \delta_1$)
\end{addmargin} 
\begin{addmargin}{1.5cm}
Calculate the corrector search direction $(d_x,d_\tau,d_y)$ by \eqref{eq:system-pred-1} with $r_{RHS}$ and $\hat H$ defined in \eqref{eq:system-corrector-step-1}, and choose $\alpha_2$ as in \eqref{eq:corrector-43} and $\alpha_1:=\frac{\alpha_2}{\tau+\alpha_2 d_\tau}$.  Apply the update in \eqref{eq:update} to get  $(x^{k+1},\tau_{k+1},y^{k+1})$, and  define $\mu_{k+1}:=\mu_k$.
\end{addmargin} 
\begin{addmargin}{1cm}
{\bf if} ($\Omega_{\mu_k}(x^k,\tau_k,y^k) \leq \delta_1$)
\end{addmargin}  
\begin{addmargin}{1.5cm}
Calculate the predictor search direction $(d_x,d_\tau,d_y)$ by \eqref{eq:system-pred-1} with $r_{RHS}=r^0/\mu_k^2$ and any $\hat H$ that satisfies \eqref{eq:lem:hessian-close-1-2}, and choose $\alpha_2=\frac{\kappa_1}{\sqrt{\vartheta}} \mu_k$ for $\kappa_1$ defined in the proof of Proposition \ref{prop:dd-4}, and $\alpha_1:=\frac{\alpha_2}{\tau+\alpha_2 d_\tau}$.  Apply the update in \eqref{eq:update} to get  $(x^{k+1},\tau_{k+1},y^{k+1})$, and  define $\mu_{k+1}:=\mu(x^{k+1},\tau_{k+1},y^{k+1})$. 
\end{addmargin}
\begin{addmargin}{1cm}
$k \leftarrow k+1$.
\end{addmargin}
\noindent {\bf end while}\\
\noindent\rule{16cm}{0.6pt}\\
Note that even though the choices of $\delta_1$ and $\delta_2$ in the PtPCA, as we show in the following, gives us the desired iteration complexity bounds, these choices are too small for practical purposes. In practice, as we have done in the DDS code, $\delta_1$ and $\delta_2$  are chosen large enough to guarantee long steps. To achieve long steps in practice, we should not restrict the algorithm to Dikin ellipsoids. There are properties for classes of s.c.\ barriers that strengthen the Dikin ellipsoid property to anywhere in the interior of the domain. We mention \emph{negative curvature} \cite{nesterov1997self,guler1997hyperbolic,nesterov2016local} and \emph{$\alpha$-regularity} \cite{multi} here. Negative curvature is a property for many interesting LH s.c.\ barriers (see \cite{nesterov1997self}, \cite{guler1997hyperbolic}, and \cite{nesterov2016local}-Section 9.2) that lets us extend a Hessian estimation property like  \eqref{property-3} to effectively the whole domain of the s.c.\ barrier. A s.c.\ function is additionally $\alpha$-regular if the second derivative also controls the fourth derivative in a proper way \cite{multi}. It was shown in \cite{multi} that many useful s.c.\ barriers are $\alpha$-regular, such as the ones in Table \ref{tbl:DD-example-1} for LP, SOCP, and SDP, and the ones we built for Geometric Programing and Entropy Programming. If all the s.c.\ barriers given in a problem instance have one of these properties, the practical version of our algorithm is theoretically guaranteed to take long steps (a large portion of the distance between the current iterate and the boundary). If even one of these barriers does not have any long-step property, this theoretical guarantee may not hold. It is possible to construct some pathological examples on which the algorithm has to take a short step in every iteration; however, the practical version of the algorithm generally has a chance to take long steps in most of the iterations.

Our analysis of the predictor and corrector steps implies the following theorem:
\begin{theorem} \label{thm:complexity-result}
For the polynomial-time predictor-corrector algorithm, there exists a positive constant $\kappa_2$ depending on $\xi$  such that after $N$ iterations, we get a point $(x,\tau,y) \in Q_{DD}$ such that
\begin{eqnarray} \label{eq:thm-com}
\mu(x,\tau,y) \geq \exp\left(\frac{\kappa_2}{\sqrt{\vartheta}} N \right). 
\end{eqnarray}
\end{theorem}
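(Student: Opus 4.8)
The plan is to track the growth of $\mu_k$ across iterations and count how many corrector steps can occur between consecutive predictor steps, then combine these to bound the total progress after $N$ iterations. First I would observe that the algorithm alternates between two regimes: when $\Omega_{\mu_k} > \delta_1$ we run a corrector step which keeps $\mu_{k+1} = \mu_k$ unchanged but, by Proposition \ref{prop:corr-ana}, decreases $\Omega_{\mu_k}$ by an additive amount at least $\frac{\alpha_2}{32\bar\xi_1^2}$ (with $\alpha_2$ the fixed constant from \eqref{eq:corrector-43}); when $\Omega_{\mu_k} \le \delta_1$ we run a predictor step which, by Proposition \ref{prop:dd-4}, multiplies $\mu$ by a factor at least $1 + \frac{\kappa_1}{\sqrt\vartheta}$ while landing at a point with $\Omega_{\mu_{k+1}} \le \delta_2$.

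The key counting step is to bound the number of consecutive corrector iterations following a predictor step. Right after a predictor step we have $\Omega \le \delta_2$; each corrector step decreases $\Omega$ by at least the fixed additive constant $c_0 := \frac{\alpha_2}{32\bar\xi_1^2}$ (which depends only on $\xi$), and we stop correcting once $\Omega \le \delta_1$. Hence the number of corrector steps between two predictor steps is at most $\lceil (\delta_2 - \delta_1)/c_0 \rceil =: R$, a constant depending only on $\xi$. (One must check that the corrector step keeps us inside the region where Proposition \ref{prop:corr-ana} applies, i.e. that \eqref{eq:bound-omega} holds; this is guaranteed by the choice $\delta_2 \le \frac{1}{100((\bar\xi_2\bar\xi_1)^3 + \bar\xi_3\bar\xi_1^3)^2}$ in the PtPCA initialization, since $\Omega$ only decreases during a corrector phase.) Therefore, among any $N$ consecutive iterations, at least $N/(R+1)$ of them are predictor steps.

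Putting this together: after $N$ iterations the number of predictor steps performed is at least $P \ge N/(R+1)$, and each one multiplies $\mu$ by at least $1 + \frac{\kappa_1}{\sqrt\vartheta}$, while corrector steps leave $\mu$ unchanged. Since $\mu_0 = \mu(x^0,\tau_0,y^0) \ge 1$ (indeed $\mu_0 = 1$, as noted after Theorem \ref{thm:cent-path}), we get
\begin{eqnarray*}
\mu(x,\tau,y) \ge \left(1 + \frac{\kappa_1}{\sqrt\vartheta}\right)^{P} \ge \left(1 + \frac{\kappa_1}{\sqrt\vartheta}\right)^{N/(R+1)}.
\end{eqnarray*}
Using $\ln(1+t) \ge t/2$ for $t \in (0,1)$ (and $\kappa_1/\sqrt\vartheta \le \kappa_1 \le 1$, which we may arrange, since $\vartheta \ge 1$), this gives $\mu(x,\tau,y) \ge \exp\!\left(\frac{\kappa_1}{2(R+1)}\cdot\frac{N}{\sqrt\vartheta}\right)$, so the claimed bound holds with $\kappa_2 := \frac{\kappa_1}{2(R+1)}$, which depends only on $\xi$.

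I expect the main obstacle to be the bookkeeping around the corrector phase: one must confirm that the hypothesis \eqref{eq:bound-omega} of Lemma \ref{lem:corr-ana} and Proposition \ref{prop:corr-ana} is maintained throughout every corrector phase (it is, because $\Omega$ is monotonically decreasing there and starts below the threshold), and that the finitely-many-corrector-steps bound $R$ is genuinely uniform in the problem data — which it is, because the additive decrease $c_0$ and the gap $\delta_2 - \delta_1$ depend only on $\xi$, not on $\vartheta$, $A$, $c$, or $\Phi$. A minor subtlety is ensuring the very first iterations (before the first predictor step, if $\Omega_{\mu_0} > \delta_1$) are handled: since $\mu_0 = 1$ and corrector steps do not decrease $\mu$, these only help, so the bound is unaffected. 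The rest is the elementary logarithmic estimate above.
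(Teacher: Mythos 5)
Your proposal is correct and follows essentially the same route as the paper: bound the number of corrector steps per predictor--corrector cycle by a constant depending only on $\xi$ (using the additive decrease from Proposition \ref{prop:corr-ana} against the gap $\delta_2-\delta_1$), then compound the multiplicative increase $1+\kappa_1/\sqrt{\vartheta}$ from Propositions \ref{prop:mu-increase} and \ref{prop:dd-4} over the predictor steps. Your write-up is in fact somewhat more explicit than the paper's about the bookkeeping (maintaining hypothesis \eqref{eq:bound-omega} during corrector phases and the final logarithmic estimate), but the argument is the same.
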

\begin{proof}
By Proposition \ref{prop:corr-ana}, after each predictor step, we have to do at most
\[
64(\bar \xi_4 + \bar \xi_2^2)\bar \xi_1^2  (\delta_2-\delta_1),
\]
number of corrector steps to satisfy $\Omega_\mu(x,\tau,y) \leq \delta_1$. Also, by Propositions \ref{prop:mu-increase} and  \ref{prop:dd-4}, after $\bar N$ cycles of predictor-corrector steps, we have
\[
\mu  \geq \left( 1+\frac{\kappa_1}{\sqrt{ \vartheta}} \right)^{\bar N}. 
\]
Therefore, we have \eqref{eq:thm-com} for $\kappa_2 = O(1) \kappa_1$. 
\end{proof}

Theorem \ref{thm:complexity-result} is the core of several consequences about determining the statuses of the problem in polynomial time (see \cite{karimi_status_arxiv}). In this article, we briefly discuss the case where the problem and its dual both are strictly feasible. In this case, we can define a \emph{feasibility measure} $\sigma_f$ (which is a complexity measure) that represents how good the geometry of the feasible regions are and the proximity of $z^0$ and $y^0$ to the boundaries of their respective domains, and prove the following theorem about the connection between $\tau$ and $\mu$: 
\begin{theorem} [\cite{karimi_status_arxiv, karimi_thesis}] \label{thm:fes-meas-dd}
Assume that both primal and dual are strictly feasible and for a point $(x,\tau,y) \in Q_{DD}$  we have the additional property that $\delta_*(y|D)+y_{\tau,0}+\tau \langle c,x \rangle \leq 0$. Then, 
\begin{eqnarray} \label{eq:thm:fes-meas-dd-1}
\tau-1  \geq  \sigma_f \mu(x,\tau,y) - \frac{1}{\sigma_f},
\end{eqnarray}
where $\sigma_f$ is the feasibility measure defined as 
\begin{eqnarray*} \label{eq:dd-outcome-30}
\sigma_f := \sup \left\{\alpha: \alpha <  1, \ \ \bar y - \alpha y^0  \in D_*,  \ \ \frac{A \bar x - \alpha z^0}{ 1- \alpha}  \in D, \ \ \delta_* (\bar y - \alpha y^0|D) +  \bar y_\tau - \alpha y _{\tau,0}  \leq 0   \right\},
\end{eqnarray*}
for $\bar x := \argmin_x  \{\Phi(Ax)+\langle c,x \rangle \}$, $\bar y := \Phi'(A \bar x)$, and $\bar y_\tau := -\xi \vartheta - \langle \bar y, A \bar x \rangle$. 
\end{theorem}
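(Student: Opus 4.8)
The plan is to fix any $\alpha$ admissible in the definition of $\sigma_f$, prove the pointwise bound $\tau-1\ge \alpha\,\mu(x,\tau,y)-1/\alpha$, and then let $\alpha\uparrow\sigma_f$ (a supremum/limit, so it is irrelevant whether the supremum is attained, which in general it need not be, e.g. when $A\bar x-z^0\in\rec(D)$). Throughout I will use: first-order optimality of $\bar x$, which gives $A^\top\bar y=-c$ and hence $\langle\bar y,Av\rangle=-\langle c,v\rangle$ for all $v$; the normalizations $y_{\tau,0}=-\xi\vartheta-\langle y^0,z^0\rangle$ and $\bar y_\tau=-\xi\vartheta-\langle\bar y,A\bar x\rangle$; and the elementary inequality $\langle s,z\rangle\le\delta_*(s|D)$ for $z\in D$, $s\in D_*$, immediate from \eqref{eq:supp-fun-1}. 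I would first note the admissible set for $\sigma_f$ is nonempty and in fact $\sigma_f>0$: $\alpha=0$ works because $\bar y=\Phi'(A\bar x)\in\inte D_*$, $A\bar x\in\inte D$, and Theorem~\ref{thm:support-fun} with $k=1$ together with $\Phi'_*(\bar y)=A\bar x$ (from \eqref{eq:LF-2}) give $\delta_*(\bar y|D)\le\langle\bar y,A\bar x\rangle+\vartheta<\langle\bar y,A\bar x\rangle+\xi\vartheta=-\bar y_\tau$.

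The core is to couple three instances of the support inequality. First, pairing $u:=Ax+\tfrac1\tau z^0\in D$ against $\bar y-\alpha y^0\in D_*$ and invoking the third defining inequality of $\sigma_f$ gives $\langle\bar y-\alpha y^0,u\rangle\le\alpha y_{\tau,0}-\bar y_\tau$. Second, pairing $w:=(A\bar x-\alpha z^0)/(1-\alpha)\in D$ against $y\in D_*$ and invoking the extra hypothesis on $(x,\tau,y)$ gives $\langle y,w\rangle\le-y_{\tau,0}-\tau\langle c,x\rangle$. I would expand both (using $A^\top\bar y=-c$ and $A^\top y=A^\top y^0-(\tau-1)c$), scale the first by $\tau$, and eliminate the one inconvenient term $\langle y,z^0\rangle$ via the third formula for $\mu$ in \eqref{eq:dd-4-2}. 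Adding the two inequalities, the $\langle c,x\rangle$– and $\langle y^0,Ax\rangle$–terms cancel exactly, and after substituting $\langle c,\bar x\rangle=-\langle\bar y,A\bar x\rangle$ and the $y_\tau$ normalizations everything collapses to
\[
\tau\ \ge\ \alpha\,\mu(x,\tau,y)\ +\ \tfrac{1}{\xi\vartheta}\,\langle y^0-\bar y,\ A\bar x-z^0\rangle\ -\ (1-\alpha).
\]

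This is not yet enough, since $\langle y^0-\bar y,A\bar x-z^0\rangle\le 0$ by monotonicity of $\Phi'$; the whole point of $\sigma_f$ is that this remainder is bounded \emph{below}. For that I would use the third instance: pairing $w\in D$ against $\bar y-\alpha y^0\in D_*$ and again using the third defining inequality of $\sigma_f$ yields $\langle\bar y-\alpha y^0,\,A\bar x-\alpha z^0\rangle\le(1-\alpha)(\alpha y_{\tau,0}-\bar y_\tau)$; expanding, clearing the factor $(1-\alpha)$, and using the $y_\tau$ normalizations, the $\langle\bar y,A\bar x\rangle$– and $\langle y^0,z^0\rangle$–terms combine so that, after dividing by $\alpha$,
\[
-\langle y^0-\bar y,\ A\bar x-z^0\rangle\ \le\ \tfrac{(1-\alpha)^2}{\alpha}\,\xi\vartheta .
\]
Substituting this into the previous display and using $-\tfrac{(1-\alpha)^2}{\alpha}-2+\alpha=-\tfrac1\alpha$ gives $\tau-1\ge\alpha\,\mu(x,\tau,y)-\tfrac1\alpha$ for every admissible $\alpha$; letting $\alpha\uparrow\sigma_f$ (continuity of $\alpha\mapsto\alpha\mu-1/\alpha$) yields \eqref{eq:thm:fes-meas-dd-1}.

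The only genuinely non-routine step is the last one: recognizing that the wrong-signed remainder produced by the main coupling is controlled, with exactly the constant needed, by applying the same support inequality to the $\alpha$-perturbed primal and dual objects $w$ and $\bar y-\alpha y^0$ — that is, that the definition of $\sigma_f$ has been set up precisely so that these two estimates telescope. Everything else is careful but mechanical bookkeeping of inner products; the one mild subtlety is to carry $\alpha$ as a free parameter strictly below $\sigma_f$ and pass to the limit at the end, rather than substituting $\alpha=\sigma_f$ outright.
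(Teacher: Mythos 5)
Your argument is correct, and I verified it line by line; note that this paper itself does not prove Theorem \ref{thm:fes-meas-dd} (it is imported from \cite{karimi_status_arxiv, karimi_thesis}), so your derivation was checked on its own terms against the definitions available here. The three couplings do exactly what you claim: pairing $\bar y-\alpha y^0$ with $Ax+\frac1\tau z^0$ (scaled by $\tau$) and $y$ with $\frac{A\bar x-\alpha z^0}{1-\alpha}$, then eliminating $\langle y,z^0\rangle$ via the third formula in \eqref{eq:dd-4-2}, indeed cancels the $\langle c,x\rangle$ and $\langle y^0,Ax\rangle$ terms and collapses, after the $y_\tau$ normalizations, to $\tau\ \ge\ \alpha\,\mu+\tfrac{1}{\xi\vartheta}\langle y^0-\bar y,A\bar x-z^0\rangle-(1-\alpha)$; the third coupling of $\bar y-\alpha y^0$ with $\frac{A\bar x-\alpha z^0}{1-\alpha}$ gives precisely $-\langle y^0-\bar y,A\bar x-z^0\rangle\le\tfrac{(1-\alpha)^2}{\alpha}\xi\vartheta$, and the identity $-\tfrac{(1-\alpha)^2}{\alpha}-2+\alpha=-\tfrac1\alpha$ closes the pointwise bound $\tau-1\ge\alpha\mu-\tfrac1\alpha$ for every admissible $\alpha>0$, from which the theorem follows by taking admissible $\alpha_k\to\sigma_f$. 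Two small points deserve one extra sentence each in a written version: (i) your positivity claim for $\sigma_f$ needs the (easy) continuity/openness step you only gesture at — at $\alpha=0$ all three conditions hold with room to spare ($\bar y\in\inte D_*$, $A\bar x\in\inte D$, and $\delta_*(\bar y|D)\le\langle\bar y,A\bar x\rangle+\vartheta<-\bar y_\tau$ via Theorem \ref{thm:support-fun} with $k=1$ and \eqref{eq:LF-2}), and $\delta_*(\cdot|D)$ is finite and continuous on $\inte D_*$, so small positive $\alpha$ are admissible; and (ii) the scaling of the first coupling by $\tau$ and the division by $\alpha$ both use positivity of those quantities, which you should state explicitly ($\tau>0$ from $Q_{DD}$, $\alpha>0$ eventually since $\sigma_f>0$). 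With those remarks added, the proof is complete and uses only facts stated in this paper.
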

Note that by Lemma \ref{lem:dg-bound-1}, the hypothesis of the above theorem holds for the points close to the central path. 
Putting together the discussion we had in Subsection \ref{subsec:inter} and Theorem \ref{thm:complexity-result}, we conclude that when we have strict primal and dual feasibility, 
in $O\left( \sqrt{\vartheta}\ln\left(\frac{\vartheta}{\epsilon}\right)\right)$ number of iterations, we obtain an $\epsilon$-solution of the problem.

\section{Conclusions} \label{sec:con}

After introducing the Domain-Driven setup, we defined an infeasible-start primal-dual central path and designed and analyzed algorithms that can follow this path efficiently (Theorem \ref{thm:complexity-result}). Following our discussion in Subsection \ref{subsec:inter}, the important question is: for different statuses of the problem, what is the behavior of $(x,\tau,y)$ when $\mu \rightarrow +\infty$, and for which values of $\mu$ we can determine the status of the problem with $\epsilon$ accuracy using $(x,\tau,y)$? We answered this question for the case of strict primal and dual feasibility, for which our algorithm can return an $\epsilon$-solution in $O\left( \sqrt{\vartheta}\ln\left(\frac{\vartheta}{\epsilon}\right)\right)$ number of iterations. This bound is the current best and is new for the type of formulations we used for  handling infeasibility, even in the special case of SDP. 

The geometry of a problem in the Domain-Driven form and possible different statuses are discussed in \cite{karimi_status_arxiv} and it is shown  that the PtPCA algorithm returns certificates (heavily relying on duality) for each of these statuses in polynomial time. The iteration complexity bounds are comparable to the current best ones we have for the conic formulations (to the best of our knowledge mostly in  \cite{infea-2}). The algorithms of this article are the base of a code, called DDS (Domain-Driven Solver), that solves many classes of problems, including those listed in Section \ref{introduction}, and the list is expanding. 

An interesting special case of the Domain-Driven formulation is when $D=K-b$ where $K$ is a convex cone equipped with a $\vartheta$-LH.s.c.\ barrier $\hat \Phi$  and $b \in \R^n$. Then, the recession cone of $D$ is $K$ and so $D_*$ is the dual cone of $K$, called $K_*$. We have $\Phi_*(y)=-\langle b,y\rangle+\hat \Phi_*(y)$, where $\hat \Phi_*$ is the LF conjugate of $\hat \Phi$ and is also a LH-s.c.\ barrier. We get many simplifications by using the properties of the cones and LH-s.c.\ barriers. For example, $\delta_*(y|D)=-\langle b,y \rangle$ and so the duality gap reduces to the classic conic duality gap $\langle c, x\rangle-\langle b,y \rangle$. Another simplification  is that inequality \eqref{eq:norm-phi*-1} that we use frequently in our analysis becomes equality as
\[
\langle y, \Phi_*''(y) y \rangle = \langle y, \hat \Phi_*''(y) y \rangle  = \vartheta. 
\]
These simplifications stand out in the status determination analyses \cite{karimi_status_arxiv} and we show that, in this case, our complexity results are at least as good as the ones in \cite{infea-2} and recover them. 

\appendix 
\section{Self-Concordant Functions}  \label{appen-s.c.}
The reader can refer to \cite{interior-book},  \cite{nemirovski-notes}, and \cite{lectures-book} for a comprehensive study of the properties and calculus of s.c.\ functions, or to \cite{cone-free} and \cite{karimi_thesis}-Chapter 4 for a summery of more important properties. In this section, we summarize the properties of self-concordant (s.c.) functions that we use in this paper. 

\subsection{Self-concordant (s.c.) functions} \label{s.c.f. properties}
A convex function $f : \mathbb E \rightarrow \ \mathbb R \cup \{+\infty\}$ is called $a$-s.c.\ function if its domain $Q$ is open, $f$ is $\mathcal C^3$ on $Q$ and 
\begin{enumerate} [(i)]
	\item $f(x_i) \rightarrow +\infty$ for every sequence $\{x_i\} \subset Q$ that converges to a point on the boundary of $Q$.
	\item There exists a positive real constant $a$ such that 
	\begin{eqnarray} \label{property-1}
	|f'''(x) [h,h,h]| \leq 2 a^{-1/2}(f''(x) [h,h])^{3/2} = 2 a^{-1/2} \|h\|^{3}_{f''(x)}, \ \ \forall(x \in Q, h\in \mathbb E),
	\end{eqnarray}
where $ f^k(x) [h_1,\ldots,h_k]$ henceforth is the value of the $k$th differential of $f$ along directions $h_1,\ldots,h_k \in \mathbb E$. 
\end{enumerate}
We say that $f$ is \emph{non-degenerate} if its Hessian $f''(x)$ is positive definite at some point (and then it can be proved to be positive definite at all points) in $Q$.
\ifdetails
For a $a$-s.c.\ function $f$ and any point $x$ in its domain, we define an important concept of the \emph{Newton decrement} of $f$ at $x$ as
\begin{eqnarray} \label{ND-1}
\lambda(f,x) := a^{-1/2} \max \{f'(x) [h] : h \in \mathbb E, f''(x) [h,h] \leq 1\}.
\end{eqnarray}
When $f$ is non-degenerate, it can be shown that we have
\begin{eqnarray} \label{ND-2}
\lambda(f,x) =a^{-1/2} \|f'(x)\|^*_{f''(x)}.
\end{eqnarray}
\fi
\ifdetails
\noindent {SC-1} \textbf{(Stability under intersections, direct sums, and affine maps)} \cite{interior-book}-Proposition 2.1.1:
\begin{enumerate} [(a)]
	\item Let $f_i$, $i\in \{1,\ldots,m\}$, be an $a_i$-s.c.\ function on $\mathbb E$ with domains $Q_i$. Then, for real coefficients $\gamma_i \geq 1$, if $Q:= \cap_{i=1}^{m}Q_i$ is not empty,  $f:=\sum_{i=1}^{m} \gamma_i f_i$ is an  $a$-s.c.\ function with domain $Q$, where $a:=\min \{\gamma_i a_i: i\in \{1,\ldots,m\}\}$.  
	
	\item Let $f_i$, $i\in \{1,\ldots,m\}$, be an $a$-s.c.\ function on $\mathbb E_i$ with domains $Q_i$. Then, the function $f(x^1,\ldots,x^m):=\sum_{i=1}^{m} f_i(x^i)$, defined on $Q_1 \oplus \cdots \oplus Q_m$, is an $a$-s.c.\ function. 
	
	\item Let $f$ be a s.c.\ function with domain $Q$ and $x=Ay+b$ be an affine mapping with image intersecting $Q$, then $f(Ay+b)$ is also a s.c.\ function on $\{y: Ay+b \in Q\}$.
\end{enumerate}
\fi
From now on, we assume that $f$ is a s.c.\ function with domain $Q$.

\noindent\textbf{(Behaviour in Dikin ellipsoid and some basic inequalities):}
\begin{enumerate} [(a)]
	\item For every point $x \in Q$, we define the \emph{ Dikin ellipsoid} centered at $x$ as
	\begin{eqnarray*}
		W_1(x) := \left\{y \in \mathbb E : \frac{1}{\sqrt{a}} \|y-x\|_{f''(x)} \leq 1\right\}.
	\end{eqnarray*} 
	Then we have $W_1(x) \subset Q$ and for every point $y \in W_1(x)$ we can estimate the Hessian of $f$ at $y$ in term of the Hessian of $f$ at $x$ as
	\begin{eqnarray} \label{property-3}
	(1-r)^2 f''(x) \preceq f''(y) \preceq \frac{1}{(1-r)^2} f''(x),
	\end{eqnarray}
	where $r:= \frac{1}{\sqrt{a}} \|y-x\|_{f''(x)}$. For a proof see \cite{interior-book}-Theorem 2.1.1.
	\item For every point $x,y \in Q$ and for $r:=\frac{1}{\sqrt{a}}\|y-x\|_{f''(x)}$, we have
	\begin{eqnarray} \label{property-4}
	f(y) &\geq& f(x) + \langle f'(x),y-x \rangle + a\rho(r),  \nonumber \\
	f(y) &\leq& f(x) + \langle f'(x),y-x \rangle + a\rho(-r),
	\end{eqnarray}
where $\rho(\cdot)$ is defined in \eqref{eq:rho}. For the proof see \cite{lectures-book}-Chapter 5. 
\end{enumerate}

\ifdetails
\noindent {SC-3} \textbf{(Newton iterate):} For every point $x$, we define the \emph{Newton direction} as
\begin{eqnarray*}
\Newton(x):=\argmin_h \left\{f(x) + f'(x) [h] + \frac{1}{2}  f''(x) [h,h]\right\}. 
\end{eqnarray*}
Then, we define the \emph{damped Newton iterate} of $x$ as
\begin{eqnarray} \label{damped-1}
x^+=x + \frac{1}{1+\lambda(f,x)} \Newton(x).
\end{eqnarray}
We have the following properties for a damped Newton step
\begin{eqnarray} \label{damped-2}
&\text{(a)}& x^+ \in Q, \nonumber \\
&\text{(b)}& f(x^+) \leq f(x) - a \rho(\lambda(f,x)), \\
&\text{(c)}& \lambda(f,x^+) \leq 2\lambda ^2 (f,x). \nonumber 
\end{eqnarray}
For parts (a) and (b), see \cite{interior-book}-Proposition 2.2.2. For part (c), plug in $s=\frac{1}{1+\lambda}$ in \cite{interior-book}-Theorem 2.2.1. 
\fi

\noindent \textbf{(LF conjugate of a s.c.\ function):}
Let $f: \mathbb E \rightarrow \mathbb R \cup \{+\infty\}$ be convex. The \emph{Legendre-Fenchel (LF) conjugate} of $f$ is defined as
\begin{eqnarray}  \label{eq:LF-1}
f_*(y) := \sup_{x} \{\langle y,x \rangle-f(x)\}. 
\end{eqnarray}  
$f_*$ is always a convex function and its domain is all the points that \eqref{eq:LF-1} has a bounded solution. For a proper convex function, we have $(f_*)_*=f$ if and only if the epigraph of $f$ is closed ($f$ is a closed convex function), see for example \cite{convex-analysis}.  We use the following well-known fact frequently in this paper.
\begin{theorem}\label{thm:FY} \emph{\bf (Fenchel-Young inequality)} Let $f: \mathbb E \rightarrow \mathbb R \cup \{+\infty\}$ be a convex function and $f_*$ be its LF conjugate. For every point $x$ in the domain of $f$ and every $y$ in the domain of $f_*$, we have
\begin{eqnarray} 
f(x) + f_*(y)  \geq \langle y,x \rangle. 
\end{eqnarray}
Equality holds if and only if $y \in \partial f(x)$.
\end{theorem}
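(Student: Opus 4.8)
The plan is to prove the inequality directly from the definition of the Legendre--Fenchel conjugate and then characterize the equality case via the subdifferential. First I would fix a point $x$ in the domain of $f$ and a point $y$ in the domain of $f_*$. By the definition \eqref{eq:LF-1} of $f_*$, namely $f_*(y) = \sup_{x'} \{\langle y, x' \rangle - f(x')\}$, applying this supremum bound at the particular point $x' = x$ gives $f_*(y) \geq \langle y, x \rangle - f(x)$. Since $x$ is in the domain of $f$, the quantity $f(x)$ is finite, so we may add $f(x)$ to both sides to obtain $f(x) + f_*(y) \geq \langle y, x \rangle$, which is exactly the claimed inequality. This is the entire content of the first assertion; no self-concordance is needed here, only that $f$ is convex (in fact only that $f(x)$ is finite and $f_*(y)$ is well-defined).

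For the equality characterization, I would argue both directions. Suppose first that equality holds: $f(x) + f_*(y) = \langle y, x \rangle$, i.e.\ $f_*(y) = \langle y, x \rangle - f(x)$. Then for every point $x'$ in the domain of $f$ we have, again by \eqref{eq:LF-1}, $\langle y, x' \rangle - f(x') \leq f_*(y) = \langle y, x \rangle - f(x)$, which rearranges to $f(x') \geq f(x) + \langle y, x' - x \rangle$; this is precisely the subgradient inequality, so $y \in \partial f(x)$. Conversely, if $y \in \partial f(x)$, then $f(x') \geq f(x) + \langle y, x' - x \rangle$ for all $x'$, equivalently $\langle y, x' \rangle - f(x') \leq \langle y, x \rangle - f(x)$ for all $x'$; taking the supremum over $x'$ gives $f_*(y) \leq \langle y, x \rangle - f(x)$, and combined with the inequality already proved this yields $f(x) + f_*(y) = \langle y, x \rangle$.

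This is a classical result (see, e.g., \cite{convex-analysis}), so I do not anticipate any genuine obstacle; the proof is a short unwinding of definitions. The only point deserving a moment of care is the implicit assumption that $f(x)$ is finite (so that the rearrangements are legitimate) and that $f_*(y)$ is finite (which holds by hypothesis since $y$ is taken in the domain of $f_*$); under these standing assumptions the argument is immediate. In the context of the present paper, the relevant instances are $f = \Phi$ (or the composite s.c.\ functions built from $\Phi$ and $\Phi_*$), for which these finiteness conditions hold automatically on the open domains $\inte D$ and $\inte D_*$.
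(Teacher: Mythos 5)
Your proof is correct and is the standard argument: the inequality is immediate from evaluating the supremum defining $f_*(y)$ at $x' = x$, and the equality case is the usual equivalence with the subgradient inequality. The paper itself gives no proof of this theorem — it states it as a well-known fact with a pointer to \cite{convex-analysis} — so there is nothing to compare against; your write-up supplies exactly the classical derivation the paper implicitly relies on.
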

Assume that $f(x)$ is differentiable and the optimal value of \eqref{eq:LF-1} for $\bar y$ is attained at $\bar x$, then we must have $\bar y =  f'(\bar x)$. By Theorem \ref{thm:FY}, if both $f$ and $f_*$ are twice differentiable, for every point $x$ in the domain of $f$ we have
\begin{eqnarray} \label{eq:LF-2}
 x= f'_*(f'(x))  \ \   \Rightarrow \ \ \  f_*''( f'(x)) = [f''(x)]^{-1}. 
 \end{eqnarray}

Let $Q_*$ be the domain of $f_*$; the set of all points for which the right hand side of \eqref{eq:LF-1} is finite. We mentioned that $Q_*$ is convex and $f_*$ is a convex function on $Q_*$. It is shown in \cite{interior-book}- Section 2.4 that $Q_*=f'(Q)$, $f_*$ is a non-degenerate s.c.\ function and the LF conjugate of $f_*$ is exactly $f$.

\begin{lemma}   \label{lem:der-to-real}
Let $f$ be a 1-s.c.\ function. For every $x$ and $y$ in the domain of $f$ which satisfy  $r:= \|x-y\|_{f''(x)} <1$ we have
\begin{eqnarray}  \label{eq:lem:der-to-real-1}
 \|f'(x)-f'(y)\|^*_{f''(x)}  \leq  \frac{r}{1-r}.
\end{eqnarray}
\end{lemma}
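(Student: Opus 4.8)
The plan is to reduce the statement to a one-dimensional integral estimate along the segment joining $x$ and $y$, controlled by the Dikin-ellipsoid Hessian bound \eqref{property-3}. First I would write $f'(y) - f'(x) = \int_0^1 f''(x + t(y-x))\,[y-x]\,dt$, and then measure this vector in the local norm $\|\cdot\|^*_{f''(x)} = \|\cdot\|_{[f''(x)]^{-1}}$. Pulling the norm inside the integral via the triangle inequality gives
\[
\|f'(y)-f'(x)\|^*_{f''(x)} \leq \int_0^1 \left\| f''(x+t(y-x))\,[y-x] \right\|_{[f''(x)]^{-1}} dt.
\]
The integrand is the quantity I need to bound pointwise in $t$.

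Next I would control each integrand term. For fixed $t \in [0,1)$, set $x_t := x + t(y-x)$; since $\|x_t - x\|_{f''(x)} = t r < 1$, the point $x_t$ lies in the Dikin ellipsoid $W_1(x)$ (recall $a=1$ here), so \eqref{property-3} gives $f''(x_t) \preceq \frac{1}{(1-tr)^2} f''(x)$. Using the operator-norm interpretation of this Löwner bound, one gets $\left\| f''(x_t)[h]\right\|_{[f''(x)]^{-1}} \leq \frac{1}{(1-tr)^2} \|h\|_{f''(x)}$ for any $h$; the cleanest way to see this is to note that for any vector $w$,
\[
\langle w, f''(x_t) h\rangle \leq \|w\|_{f''(x_t)} \|h\|_{f''(x_t)} \leq \tfrac{1}{(1-tr)^2}\|w\|_{f''(x)}\|h\|_{f''(x)},
\]
and then take the supremum over $\|w\|_{f''(x)} \le 1$, which yields exactly $\|f''(x_t)h\|^*_{f''(x)} \le \frac{1}{(1-tr)^2}\|h\|_{f''(x)}$. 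Applying this with $h = y-x$ and $\|y-x\|_{f''(x)} = r$ bounds the integrand by $\frac{r}{(1-tr)^2}$.

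Finally I would carry out the elementary integral: $\int_0^1 \frac{r}{(1-tr)^2}\,dt = \left[\frac{1}{1-tr}\right]_0^1 = \frac{1}{1-r} - 1 = \frac{r}{1-r}$, which is precisely \eqref{eq:lem:der-to-real-1}. The only place that requires a little care — and the main (minor) obstacle — is the symmetric treatment of the mixed local norm $\|f''(x_t)[y-x]\|^*_{f''(x)}$: one must pass from the Löwner inequality $f''(x_t) \preceq c\, f''(x)$ to the bound on this cross-term, which is handled cleanly by the Cauchy–Schwarz argument above (applying \eqref{property-3} to both $w$ and $y-x$ simultaneously). Everything else is a routine integration of a scalar function, so no further difficulty is anticipated.
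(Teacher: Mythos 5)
Your proposal is correct and follows essentially the same route as the paper's proof: the integral representation of $f'(y)-f'(x)$, the triangle inequality, the Dikin-ellipsoid Hessian estimate \eqref{property-3} along the segment, the pointwise bound $r/(1-tr)^2$ on the integrand, and the same final integration. The only difference is cosmetic: you obtain the integrand bound via a Cauchy--Schwarz argument on $\langle w, f''(x_t)[y-x]\rangle$, while the paper switches the dual-norm center from $f''(x)$ to $f''(x_t)$ and uses $\|f''(x_t)q\|^*_{f''(x_t)}=\|q\|_{f''(x_t)}$; both apply \eqref{property-3} twice and yield the identical estimate.
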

\begin{proof}
Let us define $q:=y-x$. Starting with the fundamental theorem of calculus, we have:
\begin{eqnarray*}
\begin{array}{rcl}
\|f'(x)-f'(y)\|^*_{f''(x)}  &=& \left\|\int_0^1 f''(x+tq) q dt \right\|^*_{f''(x)} 
\leq  \int_0^1 \| f''(x+tq) q \|^*_{f''(x)} dt \nonumber \\
                 & \underbrace{\leq}_{\text{\eqref{property-3}}} & \int_0^1 \frac{1}{1- \|tq\|_{f''(x)}}\| f''(x+tq) q \|^*_{f''(x+tq)} dt \nonumber \\
                 & = & \int_0^1 \frac{1}{1- \|tq\|_{f''(x)}}\| q \|_{f''(x+tq)} dt  
                  \underbrace{\leq}_{\text{\eqref{property-3}}}  \left ( \int_0^1 \frac{1}{(1- tr)^2} dt \right ) r = \frac{r}{1-r}.
 \end{array}
\end{eqnarray*}
\end{proof}

\subsection{Self-concordant (s.c.) barriers}   \label{s.c.b. properties}
For a $\vartheta \geq 1$, we say that a $1$-s.c.\ function is a $\vartheta$-s.c.\ barrier for $\cl(Q)$ if we have
\begin{eqnarray} \label{property-2}
| f'(x) [h]| \leq \sqrt{\vartheta} \|h\|_{f''(x)}, \ \ \forall(x \in Q, h\in \mathbb E).
\end{eqnarray}
A non-degenerate s.c.\ function $f$ is a $\vartheta$-s.c.\ barrier if and only if 
\begin{eqnarray} \label{property-2-2}
\|f'(x)\|_{[f''(x)]^{-1}} \leq \sqrt{\vartheta}, \  \ \ \forall  x \in Q. 
\end{eqnarray}
If $Q$ is a convex cone, we say $f$ is $\vartheta$-logarithmically-homogeneous if for every $x \in Q$, we have
\begin{eqnarray} \label{eq:LH}
f(tx)=f(x)-\vartheta \ln(t), \ \ \forall ( t>0).
\end{eqnarray}
\ifdetails
\noindent {SCB-1} \textbf{(Stability under intersections, direct sums,  and affine maps)} \cite{interior-book}-Proposition 2.3.1:
\begin{enumerate} [(a)]
	\item Assume that for each $i\in \{1,\ldots,m\}$, $f_i$ is a $\vartheta_i$-s.c.\ barrier on $\mathbb E$ with domains $Q_i$,  and consider real coefficients $\gamma_i \geq 1$. If $Q:= \cap_{i=1}^{m}Q_i$ is not empty, then $f:=\sum_{i=1}^{m} \gamma_i f_i$ is a $(\sum_{i=1}^m \gamma_i \vartheta_i)$-s.c.\ barrier on $Q$. 
	\item Let $f_i$, $i\in \{1,\ldots,m\}$, be a $\vartheta_i$-s.c.\ barrier on $\mathbb E_i$ with domains $Q_i$. Then, the function $f(x^1,\ldots,x^m):=\sum_{i=1}^{m} f_i(x^i)$, defined on $Q:=Q_1 \oplus \cdots \oplus Q_m$, is a $(\sum_{i=1}^m  \vartheta_i)$-s.c.\ barrier on $Q$. 
	
	\item Let $f$ be a $\vartheta$-s.c.\ barrier with domain $Q$ and $x=Ay+b$ be an affine mapping with image intersecting $Q$, then $f(Ay+b)$ is also a $\vartheta$-s.c.\ barrier on $\{y: Ay+b \in Q\}$. 
\end{enumerate}
\fi 
\noindent \textbf{(Basic properties of s.c.\ barrier's):} Let $f$ be a $\vartheta$-s.c.\ barrier, then the following inequalities hold for every pair $x,y \in Q$ (see \cite{interior-book}-Proposition 2.3.2 and  \cite{nemirovski-notes}-Chapter 3):
\begin{eqnarray} \label{property-6}
f'(x)[y-x] \leq \vartheta;
\end{eqnarray}
where, as before, $f'(x)[h]$ is the first order differential of $f$ taken at $x$ along the direction $h$. 
$f$ is non-degenerate if and only if $Q$ does not contain lines. $f$ is bounded below if and only if $Q$ is bounded. Then, $f$ is non-degenerate and attains its unique minimizer $x_f$ on  $Q$.
\begin{lemma}  \label{lem:dd-9}
	Let $\Phi$ be a $\vartheta$-s.c.\ barrier with domain $\inte D \subset \mathbb E$, and $\xi >1$. Then, the function $ \Phi \left ( \frac{z}{\tau} \right)-\xi \vartheta \ln(\tau)$ with domain $\{(z,\tau): \tau > 0, \frac{z}{\tau} \in \inte D\}$ is a $\bar \xi$-s.c.\ function for an absolute constant $\bar \xi$ depending on $\xi$. Moreover, its LF conjugate  and also the summation of $ \Phi \left ( \frac{z}{\tau} \right)-\xi \vartheta \ln(\tau)$ with its LF conjugate are also $\bar \xi$-s.c.\ functions.
\end{lemma}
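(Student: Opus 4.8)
The plan is to verify that $g(z,\tau) := \Phi(z/\tau) - \xi\vartheta\ln(\tau)$ is self-concordant directly from the definition in Appendix \ref{appen-s.c.}, and then invoke the standard stability results for LF conjugates. First I would record that the domain $\{(z,\tau): \tau>0,\ z/\tau\in\inte D\}$ is open and that $g$ blows up along any sequence approaching its boundary: this follows because either $\tau\to 0$ with $z/\tau$ staying in a compact subset of $\inte D$ (so $-\xi\vartheta\ln\tau\to+\infty$ while $\Phi(z/\tau)$ stays bounded), or $z/\tau$ approaches $\bd D$ (so $\Phi(z/\tau)\to+\infty$ by the barrier property of $\Phi$), or $\tau\to+\infty$ — but the last case cannot reach a finite boundary point unless $z/\tau$ also moves, reducing to the previous cases. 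The bulk of the work is the third-derivative bound \eqref{property-1}. Here the key observation, already flagged in Remark \ref{rem-1} and used implicitly around \eqref{hessian-phi-22}–\eqref{hessian-phi-2-3}, is that $g$ is the composition of the barrier $\Phi$ with the affine-in-disguise map $(z,\tau)\mapsto z/\tau$ plus the term $-\xi\vartheta\ln\tau$; one computes that along a direction $(d,d_\tau)$ the "effective displacement" fed into $\Phi$ is $w:=\frac{d}{\tau}-\frac{d_\tau}{\tau}u$ with $u=z/\tau$, and the extra logarithmic term contributes a one-dimensional $-\xi\vartheta\ln\tau$ piece.

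The cleanest route is to write $g$ as the sum of two pieces and use the calculus of s.c.\ functions from \cite{interior-book}: the function $\Phi(z/\tau)$ is shown (proof of \cite{interior-book}-Proposition 5.1.4) to be a $\vartheta$-s.c.\ function on its domain — indeed it is essentially the recession/homogenization construction — and $-\xi\vartheta\ln\tau$ is a $1$-s.c.\ barrier in $\tau$ (scaled by $\xi\vartheta\ge 1$, so still $1$-s.c.). By the stability of s.c.\ functions under sums with compatible parameters (\cite{interior-book}-Proposition 2.1.1), $g$ is $\bar\xi$-s.c.\ with $\bar\xi$ an absolute constant depending only on $\xi$ (concretely one can take $\bar\xi=\min\{1,\xi\}/$const, but the exact value is irrelevant). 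The explicit Hessian formula \eqref{hessian-phi-2}–\eqref{hessian-phi-2-3} and the positive-definiteness argument given there (which requires exactly $\xi>1$, via the term $(\xi-1)\frac{d_\tau^2}{\tau^2}\vartheta$) confirm non-degeneracy, so $g$ is a genuine non-degenerate s.c.\ function.

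For the second assertion, I would simply quote the fact recalled in Appendix \ref{appen-s.c.} (from \cite{interior-book}-Section 2.4): the LF conjugate of a non-degenerate s.c.\ function is again a non-degenerate s.c.\ function with the same parameter, and — this is the point used repeatedly in the main text, e.g.\ in \eqref{eq:dd-29} and around \eqref{eq:dd-2-2-2} — the sum of an s.c.\ function with its own LF conjugate (as a function of the primal-dual pair) is s.c.\ as well, since adding the bilinear coupling $\langle y,z\rangle+\tau y_\tau$ does not affect the third derivative. Hence both $g_*$ and $g\oplus g_*$ are $\bar\xi$-s.c., with the same constant. The main obstacle is purely bookkeeping: being careful that the homogenization term $\tau^{-1}$ appearing inside $\Phi$ produces third derivatives that are controlled by the Hessian uniformly, which is why one leans on the pre-packaged result of \cite{interior-book}-Proposition 5.1.4 rather than differentiating $\Phi(z/\tau)$ by hand; once that is cited, the rest is the sum rule and the conjugation stability theorem.
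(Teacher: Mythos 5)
There is a genuine gap in the main step. Your plan is to split $g(z,\tau)=\Phi(z/\tau)-\xi\vartheta\ln\tau$ into the two summands $\Phi(z/\tau)$ and $-\xi\vartheta\ln\tau$, claim the first is a $\vartheta$-s.c.\ function by the proof of \cite{interior-book}-Proposition 5.1.4, and then invoke the sum rule. But $\Phi(z/\tau)$ alone is in general \emph{not convex} on $\{(z,\tau):\tau>0,\ z/\tau\in\inte D\}$, hence not self-concordant, so the sum rule has nothing to act on. A one-line counterexample: $\Phi(u)=-\ln u$ on $\R_{++}$ gives $\Phi(z/\tau)=-\ln z+\ln\tau$, which is concave in $\tau$. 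What Proposition 5.1.4 of \cite{interior-book} provides is that a suitable constant multiple of $\Phi(z/\tau)-2\vartheta\ln\tau$ is an LH s.c.\ barrier for the conic hull; the logarithmic term is integral to that construction and cannot be peeled off (nor does that result cover all $\xi>1$: for $1<\xi<2$ you would have to \emph{add back} a concave multiple of $\ln\tau$, which again breaks the sum rule). The paper's own computation makes this visible: in \eqref{eq:dd-35-2} the cross term $-2d_\tau\langle\Phi'(z/\tau),d_z-\frac{d_\tau}{\tau}z\rangle/\tau^2$ is indefinite and is absorbed only by using the barrier inequality \eqref{property-2} together with the full $\xi\vartheta d_\tau^2/\tau^2$ contributed by $-\xi\vartheta\ln\tau$, leaving $(\xi-1)\vartheta d_\tau^2/\tau^2$; convexity (and non-degeneracy) is a property of the \emph{combination}, precisely because $\xi>1$.

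Consequently the actual work of the lemma is missing from your proposal: one must verify the third-derivative inequality for $g$ directly. The paper does this by restricting to a line, writing $h:=\frac{1}{\tau}\bigl(d_z-\frac{d_\tau}{\tau}z\bigr)$, deriving from \eqref{eq:dd-35-2} the bounds $\bigl|\frac{d_\tau}{\tau}\sqrt{\vartheta}\bigr|\leq\sqrt{g''(0)}/\sqrt{\xi-1}$ and $\|h\|_{\Phi''}\leq\bigl(1+\frac{1}{\sqrt{\xi-1}}\bigr)\sqrt{g''(0)}$, expanding $g'''(0)=\Phi'''[h,h,h]-6\Phi''[h,h]\frac{d_\tau}{\tau}+6\Phi'[h]\bigl(\frac{d_\tau}{\tau}\bigr)^2-2\xi\vartheta\bigl(\frac{d_\tau}{\tau}\bigr)^3$, and bounding each term via $|\Phi'''[h,h,h]|\leq 2\|h\|_{\Phi''}^3$ and $|\Phi'[h]|\leq\sqrt{\vartheta}\|h\|_{\Phi''}$, which yields $|g'''(0)|\leq C(\xi)\,(g''(0))^{3/2}$ and hence an explicit $\bar\xi$ depending on $\xi$. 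None of this is replaced by your citation. Your treatment of the second assertion (conjugate and sum with the conjugate, via \cite{interior-book}-Theorem 2.4.1 / Section 2.4) is in line with the paper, but it presupposes the first part; also note the lemma concerns the plain sum $g(z,\tau)+g_*(y,y_\tau)$ on the product space (direct-sum rule), so no remark about a bilinear coupling is needed there. Finally, your boundary blow-up discussion is too quick: as $\tau\to 0$ with $z\to z^*\neq 0$, the point $z/\tau$ escapes to infinity rather than staying in a compact subset of $\inte D$, and $\Phi(z/\tau)$ can tend to $-\infty$ logarithmically; it is again the factor $\xi>1$ that makes $\Phi(z/\tau)-\xi\vartheta\ln\tau\to+\infty$.
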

\begin{proof}
Consider the function $\Phi(\frac{z}{\tau})-\xi \vartheta \ln(\tau)$. First we show that the function is convex. Let us define
\[
g(\alpha):= \Phi \left(\frac{z+\alpha d_z}{\tau+\alpha d_\tau} \right)-\xi \vartheta \ln(\tau + \alpha d_\tau). 
\] 
Then, we have
\begin{eqnarray*} \label{eq:dd-35}
g''(0)&=&\frac{1}{\tau^2} \left [ \langle d_z-\frac{d_\tau}{\tau} z, \Phi'' \left (\frac{z}{\tau} \right) \left ( d_z-\frac{d_\tau}{\tau} z \right) \rangle   - 2d_\tau \langle \Phi' \left (\frac{z}{\tau} \right),   d_z-\frac{d_\tau}{\tau} z \rangle + \xi \vartheta d_\tau^2 \right].  
\end{eqnarray*}
By using inequality \eqref{property-2} for the middle term and doing some simple algebra we get
\begin{eqnarray} \label{eq:dd-35-2}
g''(0) \geq \frac{1}{\tau^2} \left [ \left \|d_z-\frac{d_\tau}{\tau} z \right \|_{\Phi''} - |d_\tau| \sqrt{\vartheta}  \right]^2+ (\xi-1) \frac{d_\tau^2}{\tau^2} \vartheta .
\end{eqnarray}
\eqref{eq:dd-35-2} shows that  $\Phi(\frac{z}{\tau})-\xi \vartheta \ln(\tau)$ is strictly convex for every $\xi > 1$. 

To prove that it is a s.c.\ function, we show that there exists an absolute constant $\bar \xi$ depending on $\xi$ such that  $|g'''(0)| \leq  2 \bar \xi^{-1/2} (g''(0))^{3/2}$. 
For simplicity, let us define $h:=\frac{1}{\tau} \left( d_z-\frac{d_\tau}{\tau} z \right)$. First, note that from \eqref{eq:dd-35-2} we have
\begin{eqnarray} \label{eq:lem:dd-9-1}
\left |\frac{d_\tau}{\tau} \sqrt{\vartheta} \right |  \leq \frac{\sqrt{g''(0)}}{\sqrt{\xi-1}},  \ \text{and} \ 
\|h\|_{\Phi''}   \leq \sqrt{g''(0)} + \left |\frac{d_\tau}{\tau} \sqrt{\vartheta} \right |  \leq  \underbrace{\left ( 1+\frac{1}{\sqrt{\xi-1}} \right)}_{=:\gamma}\sqrt{g''(0)}.
\end{eqnarray}
\ifdetails
Let us define $g(\alpha)=\Phi(f(\alpha))-\xi\vartheta \ln(\tau+\alpha d_\tau)$. We have
\begin{eqnarray*}
g'(\alpha)&=&\Phi'[f'(\alpha)]-\xi \vartheta \frac{d_\tau}{\tau+\alpha d_\tau}, \\
g''(\alpha)&=&\Phi''[f'(\alpha),f'(\alpha)]+\Phi'[f''(\alpha)]+\xi \vartheta \frac{d_\tau^2}{(\tau+\alpha d_\tau)^2}, \\
g'''(\alpha)&=&\Phi'''[f'(\alpha),f'(\alpha),f'(\alpha)]+3\Phi''[f''(\alpha),f'(\alpha)]+\Phi'[f'''(\alpha)]-\xi \vartheta \frac{2d_\tau^3}{(\tau+\alpha d_\tau)^3},
\end{eqnarray*}
where
\begin{eqnarray*}
f'(\alpha)&=& \frac{\tau d_z-d_\tau z}{(\tau+\alpha d_\tau)^2}, \\
f''(\alpha)&=&-\frac{2d_\tau(\tau d_z-d_\tau z)}{(\tau+\alpha d_\tau)^3}, \\
f'''(\alpha)&=&\frac{6d^2_\tau(\tau d_z-d_\tau z)}{(\tau+\alpha d_\tau)^4}.
\end{eqnarray*}
\fi
By expanding the expression for $g'''(0)$, we have
\begin{eqnarray} \label{eq:lem:dd-9-2}
g'''(0)=\Phi'''[h,h,h]-6\Phi''[h,h] \left( \frac{d_\tau}{\tau}\right)+6\Phi'[h] \left( \frac{d_\tau}{\tau}\right)^2-2 \xi \vartheta \left( \frac{d_\tau}{\tau}\right)^3.
\end{eqnarray}
Because $\Phi$ is a 1-s.c.\ function, by definition in \eqref{property-1}, we have $|\Phi'''[h,h,h] | \leq 2 (\Phi''[h,h])^{3/2}=2(\|h\|_{\Phi''})^3$, and because $\Phi$ is a $\vartheta$-s.c barrier, by definition \eqref{property-2}, we have $|\Phi'[h]| \leq \sqrt{\vartheta} \|h\|_{\Phi''}$. Substituting these in \eqref{eq:lem:dd-9-2}, using the inequalities in \eqref{eq:lem:dd-9-1} and the fact that $\vartheta \geq 1$, we have:
\begin{eqnarray} \label{eq:lem:dd-9-3}
g'''(0)\leq \left ( 2\gamma^3+\frac{6\gamma^2}{\sqrt{\xi-1}}+\frac{6\gamma}{\xi-1}+\frac{2\xi}{(\xi-1)^{3/2}} \right)(g''(0))^{3/2},
\end{eqnarray}
where $\gamma$ is defined in \eqref{eq:lem:dd-9-1}. 
	
	For the second part of the lemma for the conjugate function, see the proof of Theorem 2.4.1 in \cite{interior-book}.
\end{proof}
\subsection{LF conjugate of s.c.\ barriers}  \label{appen-s.c.-LF}
If $f$ is a $\vartheta$-s.c.\ barrier, then $f_*$ is a s.c.\ function, but it is \textbf{not} necessarily a s.c.\ barrier.  $Q_*$ is either the entire $\mathbb E^*$ if $Q$ is bounded, or the open cone
\begin{eqnarray} \label{open-cone}
\rec_*(Q):= \{s \in \mathbb E^* : \langle s,h \rangle  < 0, \forall h \in \rec(Q) \},
\end{eqnarray}
where $\rec(Q)$ is the \emph{recession cone} of $Q$ defined as
\begin{eqnarray} \label{eq:rec-cone}
\rec(Q):= \{h \in \mathbb E : x+th \in Q, \ \ \forall x \in Q, \  \forall t \geq 0 \}.
\end{eqnarray}
In this article, we frequently use the fact that $f_*$ has some useful properties beyond those of an arbitrary s.c.\ function, such as Theorem \ref{thm:support-fun}.


\section{Examples of s.c. functions to clarify Figure \ref{Fig-diagram}} \label{appen:examples}
It is well-known that $-\ln(x)$ is a 1-LH s.c.\ barrier for the cone $\R_{+}$ and its LF conjugate $-1-\ln(-y)$ is also a 1-LH s.c.\ barrier.  Assume that $f: \R^n \rightarrow \R$ is a convex function with the LF conjugate $f_*$. Then, we can easily verify that for every $b \in \R^n$, the LF conjugate of $f(x-b)$ is $\langle b,y \rangle + f_*(y)$. Consider the following univariate function and its LF conjugate:
\begin{eqnarray*}
f(x):=-\ln(x-1), \ \ \ f_*(y)=-1+y-\ln(-y). 
\end{eqnarray*}
$f(x)$ is a 1-s.c.\ barrier. $f_*(y)$ is a s.c. function, but is not a s.c.\ barrier. 

As it is shown in Figure \ref{Fig-diagram}, if a function is LH s.c.\ barrier, its LF conjugate is also a LH s.c.\ barrier \cite{interior-book}. A question is: does there exist a s.c.\ barrier $f$ that is not LH, while its LF conjugate $f_*$ is also a s.c.\ barrier, as implied in  Figure \ref{Fig-diagram}? Note that by Subsection \ref{appen-s.c.-LF}, the domains of $f$ and $f_*$ both must be convex cones. The following theorem shows that the answer is yes:
\begin{theorem}
Let $n$ be a positive integer. Assume that $f(x)$ is a non-degenerate  $\vartheta$-LH s.c.\ barrier with domain $K \subset \R^n$ and let $A: \R^n \rightarrow \R^n$ be a linear transformation such that $\{0\} \subsetneq AK \subseteq K$. Then, for every $b\in \inte K$, the function $g(x):=f(Ax+b)+f(x)$ is a $2\vartheta$-s.c.\ barrier, it is not logarithmically homogeneous, and its LF conjugate $g_*$ is also a s.c.\ barrier. 
\end{theorem}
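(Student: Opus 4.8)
The plan is to verify the three claims about $g(x):=f(Ax+b)+f(x)$ separately, exploiting the stability calculus for s.c.\ barriers (the part marked {SCB-1} in the appendix: sums with coefficients $\geq 1$ and affine maps preserve the barrier property, with parameters adding up) and the fact that $b\in\inte K$ forces $x=0$ to be an interior point of $\dom g$.

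\medskip

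\noindent\textbf{Step 1: $g$ is a $2\vartheta$-s.c.\ barrier.} First I would identify the domain. Since $AK\subseteq K$, the map $x\mapsto Ax+b$ sends the cone $K$ into $b+K\subseteq\inte K$ (using $b\in\inte K$), so $Ax+b\in\inte K$ whenever $x\in\inte K$; combined with $x\in\inte K$ this shows $\dom g\supseteq\inte K$, and in fact $\dom g=\{x: Ax+b\in\inte K\}\cap\inte K$, a nonempty open convex set. Now $f(x)$ is a $\vartheta$-s.c.\ barrier on $\inte K$, and $f(Ax+b)$ is a $\vartheta$-s.c.\ barrier on $\{x:Ax+b\in\inte K\}$ by stability under affine maps. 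Their sum (with coefficients $1$) is, by the stability-under-intersection part of {SCB-1}, a $(\vartheta+\vartheta)=2\vartheta$-s.c.\ barrier on the intersection of the two domains, i.e.\ on $\dom g$. Nondegeneracy of $g$ follows because $f''$ is positive definite (as $K$ contains no lines, $f$ being nondegenerate) and $g''(x)=A^\top f''(Ax+b)A+f''(x)\succeq f''(x)\succ 0$.

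\medskip

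\noindent\textbf{Step 2: $g$ is not logarithmically homogeneous.} The cleanest argument: a $\vartheta$-LH-s.c.\ barrier $h$ on a cone satisfies $h'(x)[x]=-\vartheta$ for all $x$ in the cone (differentiate \eqref{eq:LH} in $t$ at $t=1$), and more strongly $h''(x)x=-h'(x)$. I would compute $g'(x)[x]=\langle f'(Ax+b),Ax\rangle+f'(x)[x]$. Using $f'(x)[x]=-\vartheta$ and the LH-identity $f'(Ax+b)[Ax+b]=-\vartheta$, we get $g'(x)[x]=-\vartheta-\langle f'(Ax+b),b\rangle-\vartheta=-2\vartheta-\langle f'(Ax+b),b\rangle$. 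Since $Ax+b\in\inte K$ and $b\in\inte K$, and the gradient of an LH barrier at an interior point pairs strictly negatively with every nonzero element of $K$ (indeed $f'(y)[w]=-\langle y, f''(y)w\rangle^{?}$ — more simply, $f'(y)\in -\inte K_*$), we have $\langle f'(Ax+b),b\rangle<0$, so $g'(x)[x]\neq -2\vartheta$ in general. Hence $g$ fails the necessary identity $g'(x)[x]=\textrm{const}$ that any LH barrier must satisfy, so $g$ is not logarithmically homogeneous. (Alternatively: $\dom g$ is not a cone in general, since $Ax+b\in\inte K$ is not scale-invariant; but one should double-check whether $\dom g$ could accidentally be a cone, so the gradient argument is safer.)

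\medskip

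\noindent\textbf{Step 3: $g_*$ is a s.c.\ barrier.} This is the main obstacle, since LF conjugates of s.c.\ barriers are generally only s.c.\ functions, not barriers. The strategy I would pursue is to show that $\dom g_*$ is a cone and that $g_*$ satisfies the barrier inequality \eqref{property-2-2}. By Subsection~\ref{appen-s.c.-LF}, $\dom g_*=\rec_*(\dom g)$ unless $\dom g$ is bounded; $\dom g$ is unbounded (it contains a translate of a neighborhood of $0$ scaled into $K$... actually one must check: since $AK\subseteq K$ and $b\in\inte K$, the ray $\{tw:t\geq 0\}$ for any $w\in K$ with $Aw\in K$ lies in $\dom g$ once... hmm, not quite — $A(tw)+b=tAw+b\in\inte K$ for all $t\geq0$ since $tAw\in K$, $b\in\inte K$; and $tw\in\inte K$ requires $w\in\inte K$). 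So $\rec(\dom g)\supseteq K$ is nontrivial and $\dom g_*$ is the open cone $\rec_*(\dom g)$. To get the barrier property I would use the sum/conjugate interplay: $g=f(A\cdot+b)+f$, and the conjugate of a sum is the infimal convolution $g_*=(f(A\cdot+b))_*\,\square\,f_*$. I would compute $(f(A\cdot+b))_*(y)$: it equals $\langle b', y\rangle^{?}$-type expression — precisely, if $f$ is LH then $f(Ax+b)$ has conjugate $\sup_x\{\langle y,x\rangle-f(Ax+b)\}$; substituting $z=Ax+b$ is only clean if $A$ is invertible, which is not assumed. The robust route is to invoke the theorem in the conjugate-barrier literature: the LF conjugate $g_*$ of a nondegenerate s.c.\ barrier is a $\vartheta$-s.c.\ barrier \emph{if and only if} $g$ restricted along every recession direction behaves logarithmically, i.e.\ iff the \emph{canonical barrier} condition holds. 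I would instead verify \eqref{property-2-2} directly at a generic $y\in\dom g_*$ with $x:=g_*'(y)$ so that $y=g'(x)$, reducing $\|g_*'(y)\|_{[g_*''(y)]^{-1}}^2=\langle x, g''(x)x\rangle=g''(x)[x,x]$, and then bounding $g''(x)[x,x]=f''(Ax+b)[Ax,Ax]+f''(x)[x,x]$. Using $f''(y)[y,y]\leq\vartheta$ for the barrier $f$ at the point $x$ (giving $\leq\vartheta$), and for the point $Ax+b$ one writes $Ax=(Ax+b)-b$ and expands $f''(Ax+b)[(Ax+b)-b,(Ax+b)-b]$, bounding the $(Ax+b)$-terms by $\vartheta$ and controlling the cross- and $b$-terms via Cauchy--Schwarz \eqref{eq:CS} together with $\|f'(Ax+b)\|_{[f'']^{-1}}\leq\sqrt\vartheta$. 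The delicate point — and I expect this to be the real work — is showing these extra terms involving the fixed vector $b$ stay \emph{uniformly} bounded as $y$ approaches the boundary of $\dom g_*$, i.e.\ as $x$ escapes; here one leans on $b\in\inte K$ so that $f''(Ax+b)$ applied to $b$ is controlled by the Dikin-ellipsoid estimate \eqref{property-3} relative to a fixed reference point, keeping $\|b\|_{f''(Ax+b)}$ bounded along the relevant directions. Assembling these bounds yields $g_*''(y)[y,y]\leq \bar\vartheta$ for some constant $\bar\vartheta$ (depending on $f,A,b$), which by \eqref{property-2-2} establishes that $g_*$ is a $\bar\vartheta$-s.c.\ barrier, completing the proof.
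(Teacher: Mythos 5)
Your Step 1 and the reduction at the start of Step 3 coincide with the paper's route: the paper also gets the $2\vartheta$ claim from stability of s.c.\ barriers under affine substitution and addition, and also reduces the conjugate-barrier claim to bounding $\langle g_*'(y),[g_*''(y)]^{-1}g_*'(y)\rangle=\langle x,g''(x)x\rangle=f''(Ax+b)[Ax,Ax]+f''(x)[x,x]$ with $x=g_*'(y)$. But the crux of Step 3 is left unproven, and the tool you point to cannot deliver it. After writing $Ax=(Ax+b)-b$, everything hinges on a bound for $\|b\|_{f''(Ax+b)}$ that is uniform as $x$ ranges over all of $\inte K$; the Dikin-ellipsoid estimate \eqref{property-3} only compares Hessians at points within local distance less than one of each other, whereas $Ax+b$ escapes to infinity, so no fixed reference point helps. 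What is needed is the global, $\vartheta$-dependent Hessian comparison for s.c.\ barriers via the Minkowski function (\cite{nemirovski-notes}-(3.16), \cite{interior-book}-Proposition 2.3.2): since $Ax+\alpha b\in K$ for every $\alpha\geq 0$, the Minkowski function vanishes ($\pi_{Ax}(Ax+b)=0$, or in your decomposition $\pi_b(Ax+b)=0$ because $Ax\in\rec(K)$), whence $f''(Ax+b)[Ax,Ax]\leq(\vartheta+2\sqrt{\vartheta})^2 f''(Ax)[Ax,Ax]=(\vartheta+2\sqrt{\vartheta})^2\vartheta$ --- exactly the paper's inequality \eqref{eq:Appen-B-1} --- or equivalently $\|b\|^2_{f''(Ax+b)}\leq(\vartheta+2\sqrt{\vartheta})^2\|b\|^2_{f''(b)}=(\vartheta+2\sqrt{\vartheta})^2\vartheta$. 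With that lemma your expansion closes and gives a bound depending only on $\vartheta$ (not on $f,A,b$); without it, your Step 3 names the difficulty rather than resolving it.

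Step 2 also has a logical gap. If $g$ were LH with some parameter $\vartheta_g$, the necessary identity is $g'(x)[x]\equiv-\vartheta_g$ for \emph{some} constant; your computation $g'(x)[x]=-2\vartheta-\langle f'(Ax+b),b\rangle<-2\vartheta$ (sign corrected: the inner product is negative, so the value exceeds $-2\vartheta$) only shows the value is never $-2\vartheta$, which does not exclude constancy at another value --- letting $x\to0$ along a ray shows the only candidate is $\vartheta_g=\vartheta$, so you would still have to prove that $x\mapsto\langle f'(Ax+b),b\rangle$ is non-constant. This is fixable, but the paper's argument is shorter and avoids it: LH of $g$ together with $\vartheta$-LH of $f$ would force $f(tAx+b)=f(Ax+b)-k\ln t$ for a fixed $k$ and all $t>0$, which fails as $t\downarrow0$ since the left side tends to the finite value $f(b)$ (and the case $k=0$ would make $f$ constant on a nontrivial segment, contradicting nondegeneracy). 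Incidentally, your hedge that $\dom g$ might "accidentally" be a cone is settled: since $AK\subseteq K$ and $b\in\inte K$, the constraint $Ax+b\in\inte K$ is vacuous on $\inte K$, so $\dom g=\inte K$ is a cone and a domain-based disproof of logarithmic homogeneity is indeed unavailable.
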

\begin{proof}
We know that $g$ is a $2\vartheta$-s.c.\ barrier \cite{interior-book} with domain $K$, and $g$ is not logarithmically homogeneous, since otherwise we must have $f(tAx+b)=f(Ax+b)-k\ln(t)$ for a fixed $k>0$ and every $t>0$,  which gets violated when $t$ tends to zero. To show $g_*$ is also a s.c.\ barrier, we need to prove that $\langle g_*'(y), [g_*''(y)]^{-1}g_*'(y)\rangle$  is bounded by an absolute constant for every $y \in K_*$. For a given $y$, let $x:=g'_*(y)$, then by the properties of LF conjugate, we have
\begin{eqnarray} \label{eq:Appen-B-1}
\langle g_*'(y), [g_*''(y)]^{-1}g_*'(y)\rangle&=&\langle x,g''(x)x\rangle \nonumber \\
&=& \langle Ax,f''(Ax+b)Ax\rangle+\langle x,f''(x)x\rangle  \nonumber \\
&\leq& (\vartheta+2\sqrt{\vartheta})^2 \langle Ax,f''(Ax)Ax\rangle+\langle x,f''(x)x\rangle \nonumber \\
&=& (\vartheta+2\sqrt{\vartheta})^2 \vartheta +\vartheta,  \ \ \ \text{\cite{interior-book}-eq (2.3.14)}.
\end{eqnarray}
For the inequality above, we used equation \cite{nemirovski-notes}-(3.16) and also the fact that $Ax+\alpha b \in K$ for all $\alpha \in \R_+$ and so $\pi_{Ax}(Ax+b)=0$, where $\pi$ is the Minkowski function of $K$ (defined in \cite{interior-book}-Subsection 2.3.2 or \cite{nemirovski-notes}). Inequality \eqref{eq:Appen-B-1} confirms that $g_*$ is a $((\vartheta+2\sqrt{\vartheta})^2 \vartheta +\vartheta)$-s.c.\ barrier. 
\end{proof}
As an example, consider $f(x):=-\sum_{i=1}^m \ln(a_i^\top x)$ for $a_i \in \R^n$, $i \in \{1,\ldots,m\}$, which is a $m$-LH s.c.\ barrier. Then, the function $g(x):=-\sum_{i=1}^m \ln(a_i^\top x)-\sum_{i=1}^m \ln(a_i^\top x+1)$ is a $2m$-s.c.\ barrier that is not LH and $g_*$ is also a s.c.\ barrier.

\renewcommand{\baselinestretch}{1}
\bibliographystyle{siam}
\bibliography{References}

\end{document}